\theoremstyle{plain}
\newtheorem{thm}{Theorem}[section]
\newtheorem{conj}{Conjecture}[section]
\newtheorem{lemma}[thm]{Lemma}
\newtheorem*{thm*}{Theorem}
\newtheorem*{prop*}{Proposition}
\newtheorem{prop}[thm]{Proposition}
\newtheorem{cor}[thm]{Corollary}
\theoremstyle{definition}
\newtheorem{defi}[thm]{Definition}
\theoremstyle{remark}
\newtheorem{rmk}{Remark}[section]
\newtheorem{rmks}{Remarks}[section]
\numberwithin{equation}{section}
\newcommand{\divides}{\bigm|}
\newcommand{\overbar}[1]{\mkern 1.5mu\overline{\mkern-1.5mu#1\mkern-1.5mu}\mkern 1.5mu}
\DeclareMathOperator{\coker}{coker}
\DeclareMathOperator{\Br}{Br}
\DeclareMathOperator{\End}{End}
\DeclareMathOperator{\Pic}{Pic}
\DeclareMathOperator{\Hom}{Hom}
\DeclareMathOperator{\Gal}{Gal}
\DeclareMathOperator{\Cl}{Cl}
\DeclareMathOperator{\Spec}{Spec}
\DeclareMathOperator{\Res}{Res}
\DeclareMathOperator{\Id}{Id}
\DeclareMathOperator{\rec}{rec}
\DeclareMathOperator{\art}{art}
\DeclareMathOperator{\tr}{tr}
\DeclareMathOperator{\GL}{GL}
\DeclareMathOperator{\Gl}{GL}
\DeclareMathOperator{\U}{U}
\newcommand{\DT}{\mathbb{S}}
\newcommand{\id}{\operatorname{Id}}
\newcommand{\MT}{\operatorname{MT}}
\newcommand{\Hdg}{\operatorname{Hdg}}
\newcommand{\Aut}{\operatorname{Aut}}
\newcommand{\NS}{\operatorname{NS}}
\newcommand{\Z}{\mathbb{Z}}
\newcommand{\Q}{\mathbb{Q}}
\newcommand{\R}{\mathbb{R}}
\newcommand{\Pp}{\mathbb{P}}
\newcommand{\A}{\mathbb{A}}
\newcommand{\Oo}{\mathcal{O}}
\newcommand{\D}{\mathcal{D}}
\newcommand{\p}{\mathfrak{p}}
\newcommand{\et}{\textrm{\'{e}t}}
\newcommand{\C}{\mathbb{C}}
\newcommand{\ord}{\mathrm{ord}}
\newcommand{\adjunction}[4]{\xymatrix@1{#1{\ } \ar@<-0.3ex>[r]_{ {\scriptstyle #2}} & {\ } #3 \ar@<-0.3ex>[l]_{ {\scriptstyle #4}}}}
\begin{document}

\title{Complex multiplication and Brauer groups of K3 surfaces}

\author{Domenico Valloni}
\address{Imperial College London, South Kensington, London SW7 2BU}
\email{d.valloni16@imperial.ac.uk}
\maketitle
\begin{abstract}
We study K3 surfaces with complex multiplication following the classical work of Shimura on CM abelian varieties. After we translate the problem in terms of the arithmetic of the CM field and its id\`{e}les, we proceed to study some abelian extensions that arise naturally in this context. We then make use of our computations to determine the fields of moduli of K3 surfaces with CM and to classify their Brauer groups. More specifically, we provide an algorithm that given a number field $K$ and a CM number field $E$, returns a finite lists of groups which contains $\text{Br}(\overline{X})^{G_K}$ for any K3 surface $X/K$ that has CM by the ring of integers of $E$. We run our algorithm when $E$ is a quadratic imaginary field (a condition that translates into $X$ having maximal Picard rank) generalizing similar computations already appearing in the literature. 
\end{abstract}

\tableofcontents

\section{Introduction}
As it was shown by Shimura in his seminal work \cite{MR1492449} one can study abelian varieties with CM, their torsion points, and their polarizations, only in terms of arithmetic data on their field of complex multiplication. As a matter of fact this idea can be applied to every Hodge structure with abelian Mumford-Tate group, and our aim is to study K3 surfaces with complex multiplication from this point of view. 
\begin{defi}  \label{def of cm}
A K3 surface $X / \C$ has CM if the Mumford-Tate group of $\mathrm{H}_B^2(X, \Q)$ is abelian. 
\end{defi}
Let $\NS(X) \subset \mathrm{H}^2_B(X, \Z)(1)$ be the N\'{e}ron-Severi group of $X$ and let $T(X) = \NS(X)^{\perp}$ be the lattice of transcendental cycles. The latter is an integral Hodge structure of type $ \{ (1,-1), (0,0),(-1,1) \}$ that does not contain any non-trivial sub-Hodge structure of smaller rank, at least when $X$ is projective. Zarhin in \cite{MR697317} showed that Definition \ref{def of cm} is equivalent to the following two properties of $T(X)$:
\begin{enumerate}
\item $\End_{\Hdg}(T(X)_\Q) \cong E,$ a CM field, and
\item $\dim_E T(X)_\Q = 1$, i.e. $[E \colon \Q] = \dim_\Q T(X)_\Q.$
\end{enumerate}
Therefore, complex multiplication can be read from the transcendental lattice of $X$ and, since $\dim_\Q (T(X)_\Q) \leq 20$, we always have $[E \colon \Q] \leq 20$ by point (2) in the above definition. Following the results of Rizov \cite{2005math......8018R} (see also Corollary 4.4 of Madapusi Pera's paper \cite{MR3370622}) we know, loosely speaking, that the Galois action on K3 surfaces with CM and their \et ale cohomology groups is the one predicted by Deligne in the definition of the canonical models of Shimura varieties. This is the analogue of the main theorem of complex multiplication for K3 surfaces (Section \ref{section main thm}) and it constitutes the fundamental building block of our work. Note that it implies that every complex K3 surface with CM is defined over $\overline{\Q}$, a fact already known to Shafarevich. We define now the class of surfaces we work with.
\begin{defi}
Let $X/\C$ be a K3 surface with CM. Then $X$ is \textit{principal} if $\End_{\Hdg}(T(X))$ is the maximal order of the CM field $E   \coloneqq \End_{\Hdg}(T(X)_\Q)$. We also say that $X$ has CM by $\Oo_E$, meaning the same thing. 
\end{defi}
Note that this definition is borrowed from the case of elliptic curves, and it is a classical fact that isomorphism classes of principal elliptic curves with CM by $E$ are parametrized by the class group of $E$; in particular, they are only finitely many. It is interesting to notice that for K3 surfaces things are different: in Proposition \ref{key proposition} we show how a result of Nikulin together with the surjectivity of the period map imply that for any given CM number field $E$ with $2 \leq [E \colon \Q] \leq 10$ there are \textit{infinitely} many (non-isomorphic) principal K3 surfaces with CM by $\Oo_E$. Also, when $10 < [E \colon \Q] \leq 20$, the existence or the infinitude of K3 surfaces with CM by $\Oo_E$ can be stated in purely lattice-theoretical terms, as shown in the proof of Proposition \ref{key proposition}. 

\subsection{Examples of CM K3 surfaces}
The first examples of K3 surfaces satisfying Definition \ref{def of cm} occur when $X / \C$ has maximal Picard rank $\rho(X) = 20$. These are named singular (or exceptional) K3 surfaces and always have CM by an imaginary quadratic field, generated by the square root of the discriminant of $T(X)$ (in Section \ref{sectionsingular} one can find conditions on $T(X)$ to ensure that $X$ is principal). Their geometry was studied by Shioda and Inose in \cite{MR0441982}, who related them to CM elliptic curves with a construction now known as Shioda-Inose structure, whereas their arithmetic properties (fields of definition, classification over $\Q$, relations to binary quadratic forms and relations to modular forms) were investigated by Elkies and Sch\"{u}tt ( \cite{MR3046305}, \cite{MR2346573}, and \cite{MR2602669}). Other examples occur when $X$ is the Kummer surface associated to an abelian surface with CM. Aside from these two classes, one can try to find K3 surfaces $X$ for which the action of $\Aut(X)$ on $T(X)_\Q$ generates a field $E$ for which $\dim_E T(X)_\Q =1$. Then, $X$ has CM and $E$ will always be a cyclotomic field, because $\Aut(X)$ acts on $\mathrm{H}^{2,0}(X, \C)$ via roots of unity (see \cite{Livne2009TheMO} for explicit examples). In all these constructions we note that
\begin{enumerate}
\item $[E \colon \Q] = \dim_\Q(T(X)_{\Q}) \in \{2,4\}$ or $E$ is cyclotomic;
\item The CM action can be constructed geometrically, either from abelian varieties with CM or from automorphisms. 
\end{enumerate} 
On the other hand, Taelmann \cite{taelman2016} showed that for any CM field $E$ with $2 \leq [E \colon \Q] \leq 20$ there are infinitely many $\C$-isomorphism classes of K3 surfaces with CM by $E$. Note however that his proof is purely transcendental, meaning that it constructs K3 surfaces from Hodge theory using the surjectivity of the period map. Therefore, we do not know much about their geometry. In fact, the problem of understanding whether a given K3 surface $X \subset \Pp_\C^n$ has CM feels as difficult as understanding whether a given curve $C \subset \Pp^n_\C$ has CM Jacobian. As far as we know, the only results toward \textit{real} multiplication that are not explained via (2) are the ones of Elsenhans and Jahnel \cite{Elsenhans2014}, who were able to prove the non-triviality of $\End_{\Hdg}(T(X)_\Q)$ for an explicit one-dimensional family of K3 surfaces, of generic Picard rank $16$, by counting points of their reductions $\text{mod} \,p$. 
\subsection{Brauer groups and our results} Since studying rational points of a K3 surface $X$ over a number field $K$ is often difficult, one can always attempt to compute the Brauer-Manin obstruction first: if $\A_K$ denotes the ad\`{e}les of $K$, Manin showed that there exists a natural pairing (described in Section \ref{SectionBrauer}) $$X(\A_K) \times \Br(X) \rightarrow \Q/ \Z,$$
such that $$X(K) \subset X(\A_K)^{\Br(X)} \coloneqq \{ x \in X(\A_K) \colon (x, \alpha) = 0 \,\, \forall \,\, \alpha \in \Br(X) \}. $$ In \cite{MR2664991} Skorobogatov made the following conjecture. 
\begin{conj}[Skorobogatov]
Let $X$ be as before and let $\overline{X(K)}$ be the closure of $X(K)$ in $X(\A_K)$ with respect to the ad\`{e}lic topology. Then $\overline{X(K)} = X(\A_K)^{\Br(X)}.$
\end{conj}
Let $\Br_0(X) \subset \Br(X)$ denote the constant classes of $\Br(X)$, the ones that come from the pullback of the structural morphism $X \rightarrow \Spec(K)$. Then the group $\Br(X) / \Br_0(X)$ is finite for K3 surfaces by the results of Zarhin and Skorobogatov \cite{MR2395136} and, as showed by Kresch and Tschinkel, in order to compute $X(\A_K)^{\Br(X)}$ efficiently one only needs to bound the order of $\Br(X) / \Br_0(X).$
\begin{thm*}[Kresch and Tschinkel, \cite{KRESCH20114131}] \label{Tschinkel}
Let $X/K$ be as above, assume that $X$ is given as a system of homogeneous equations in some projective space, that explicit generators for $\NS(\overline{X})$ are known and that $|\Br(X) / \Br_0(X)|$ can be effectively bounded. Then $X(\A_K)^{\Br(X)}$ is effectively computable (meaning that there exists an algorithm that returns $X(\A_K)^{\Br(X)}$, with an explicit bound on the running time).   
\end{thm*}
As we shall explain in Section \ref{SectionBrauer}, there is another subgroup $\Br_0(X) \subset \Br_1(X) \subset \Br(X)$ such that 
\begin{itemize}
\item $\Br_1(X) / \Br_0(X) \cong \mathrm{H}^1(G_K, \NS(\overline{X})) $;
\item $\Br(X) / \Br_1(X) \subset \Br(\overline{X})^{G_K}$;
\end{itemize}
where $\overline{X} = X \times \Spec(\overline{K})$ and $G_K$ is the absolute Galois group of $K$. In practice, bounding $| \Br(X) / \Br_0(X)|$ reduces to studying $\mathrm{H}^1(G_K, \NS(\overline{X}))$ and $\Br(\overline{X})^{G_K}$, the second group being usually the harder to understand. This kind of problem has been studied for some particular K3 surfaces, most of them having CM or being Kummer of a product of two elliptic curves. Ieronymou, Skorobogatov and Zarhin in \cite{MR3590544, MR2920883, MR2802504} have studied the cases when $X$ is a Kummer surface associated to a product of two elliptic curves, or a diagonal quartic surface defined over $\Q$. Newton's paper \cite{MR3483120} contains general results when $X$ is the Kummer surface associated to the self product of an elliptic curve with CM (in particular, $X$ has maximal Picard rank). Her approach is quite similar to ours, in the sense that we also use class field theory in a crucial way. V\'{a}rilli-Alvarado and Viray \cite{MR3731278} have studied the boundness of $\Br(X) / \Br_1(X) $ for some particular Kummer surfaces, and they proved the existence of a bound (Theorem 1.3 and 1.5 of \textit{loc. cit.}) when restricting to $\ell$-torsion $ \big( \Br(X) / \Br_1(X) \big)[\ell^\infty]$, with $\ell$ an odd prime number. Finally, a similar question was studied by Cadoret and Charles in \cite{cadoret2018remark}. Given a prime number $\ell$, they prove the existence of a universal bound for $\Br(\overline{X})^{G_K}[\ell^\infty]$ when $X$ is allowed to vary in a one-dimensional family (see Theorem 1.2.1 in \textit{loc. cit.} for a precise statement). \\ 
The next theorem is our main result in this direction and, as explained in Section \ref{section applications brauer}, it follows formally from the theory developed in the previous sections. 
\begin{thm} 
 \label{Brauer theorem}
There is an algorithm that, given as an input a CM number field $E$ and a number field $K$, returns a \textit{finite} list of groups $\Br(E,K)$ such that for every K3 surface $X/K$ with CM by $\Oo_E$, $$\Br(\overline{X})^{G_K} \in \Br(E,K).$$
\end{thm}
Consider for example $K=E=\Q(i)$. Running the algorithm above we found that $\Br(\Q(i), \Q(i))$ consists of   $$0,  \, \, \Z /2 , \, \,  (\Z /2)^2 , \,\, \Z / 4  \times \Z /2 , \,\, (\Z /4)^2 , \,\, \Z / 8  \times \Z /4 , $$
 $$(\Z /8)^2 , \,\, (\Z / 3)^2 ,  \, \,  (\Z / 3)^2 \times \Z /2 , \,\, (\Z / 3)^2 \times (\Z /2)^2 ,$$ $$(\Z / 5)^2, \,\, (\Z / 5)^2  \times \Z /2 , \,\, (\Z / 5)^2  \times (\Z /2)^2.$$ If one were interested, for instance, in computing the transcendental Brauer-Manin obstruction for a diagonal quartic surface $X_{a,b,c}/ \Q$ given by the equation $x^4 + ay^4 + bz^4 + cw^4 = 0,$ then one would automatically know that $$\Br( \overline{X_{a,b,c}})^{G_{\Q}} \subset \Br(\overline{X_{a,b,c} })^{G_{\Q(i)}} \in \Br(\Q(i),\Q(i)),$$
making the computations effective for every parameter $a,b,c \in \Q$. Note moreover that diagonal quartics are all isomorphic to the Fermat hypersurface $x^4 + y^4 + z^4 + w^4 = 0$ over an algebraic closure of $\Q$. On the other hand, there are infinitely many $\C$-isomorphism classes of K3 surfaces with CM by $\Z[i]$ (that can be explicitly constructed using Shioda-Inose ideas in \cite{10.1007/978-3-642-20300-8_15}) and we remark that the list above works for each one of them that admits a model over $\Q(i)$. 
\subsection{Strategy of the proof and fields of moduli}
As already mentioned, to build the algorithm we shall make use of mainly two ingredients: the main theorem of complex multiplication for K3 surfaces and the ad\`{e}lic language developed by Shimura. In Sections \ref{Section6}, \ref{Section7} and \ref{Section8} we explain how to adapt Shimura ideas to K3 surfaces. Then in Section \ref{section K3 class groups} we associate to any ideal $I \subset \Oo_E$ an abelian field extension $F_I / E$. By class field theory, the extension $F_I/ E$ is determined by a finite-index subgroup $S_I \subset \A_{E,f}^\times$ inside the finite id\`{e}les of $E$ and we have $$S_I   \coloneqq \{ s \in \A^\times_{E,f} \colon \exists \,\, e \in E^\times \,\, \text{such that} \,\, \frac{se}{\overbar{se}} \in \hat{\Oo}^\times_E \,\, \text{and} \,\, \frac{se}{\overbar{se}} \equiv 1 \mod I \}.$$ 
It follows from the formula above that $S_I = S_{\overline{I}} = S_{I \cap \overline{I}}$, so we can assume without loss of generality that $I \subset \Oo_E$ is such that $I = \overline{I}$. In sections \ref{section K3 class groups} and \ref{section cardinality K3 class groups} one can find a detailed study of these field extensions and a closed formula for the degrees $[F_I \colon E]$. When $E$ is quadratic imaginary we can describe $F_I$ as follows: denote by $K_I$ and $\text{Cl}_I$ respectively the ray class field and the ray class group of $E$ modulo $I$, so that $K_I / E$ is an abelian extension with Galois group isomorphic to $\text{Cl}_I$. Then $$E \subset F_I \subset K_I$$ is the fixed field of $\{ x \in \text{Cl}_I \colon x = \overline{x} \} \subset \text{Cl}_I$ (where complex conjugation acts on $\text{Cl}_I$ due to $I = \bar{I}$). Note that if $X / \C$ has CM by $\Oo_E$ there is a natural action of $\Oo_E$ on $\Br(X)$, and we denote $\Br(X)[I]   \coloneqq \{ \alpha \in \Br(X) \colon i \alpha = 0  \,\, \forall \,\, i \in I\}$. The meaning behind the definitions above lies in the next theorem, which is also the main technical result of the paper. 

\begin{thm} \label{field of moduli IN}
Let $X/ \C$ be a K3 surface with CM by $\Oo_E$. Then 
\begin{enumerate}
\item The field extension $F_I/E$ corresponds to the fixed field of  
$$ \,\, \,\,\,\,\, \,\,\,\,\, \,\,\,\,\, \,\, \{ \sigma \in \Aut(\C / E) \colon \exists \,\, \text{Hodge isometry} \,\, f \colon T(X) \rightarrow T(X^\sigma)\colon f^* \circ \sigma^*|_{\Br(X)[I]} = \id \}, $$
where $X^\sigma = X \times_\sigma \Spec(\C)$, $\sigma^* \colon \Br(X) \rightarrow \Br(X^\sigma)$ is the natural pullback map, and $f^* \colon \Br(X^\sigma) \rightarrow \Br(X)$ the map induced via the identification $\Br(X) \cong \Hom(T(X), \Q/ \Z)$.
Differently said, $F_I$ is the field of moduli of $(T(X), \Br(X)[I])$ over $E$. 
\item If $\rho(X) \geq 12$ (i.e., if $[E \colon \Q] \leq 10$), the field of moduli of $X$ over $E$ corresponds to $F_{\Oo_E}/ E$. In particular, it does not depend on $X$.
\end{enumerate}
\end{thm}
\begin{rmk}  If $[E \colon \Q] \leq 10$ there are infinitely many K3 surfaces with CM by $\Oo_E$ by Proposition \ref{key proposition}, and they all have the same field of moduli. On the other hand, the main result of Skorobogatov and Orr \cite{MR3830546} says that only finitely many of them can be defined over a number field of bounded degree. It follows that the difference between the field of moduli of a K3 surface and a minimal field of definition can be arbitrarily large. 
\end{rmk}
To see how the algorithm in Theorem \ref{Brauer theorem} works, note that if $X$ is defined over a number field $K$ containing $E$ there exists a unique ideal $I \subset \Oo_E$ such that $\Br(\overline{X})^{G_K} = \Br(\overline{X})[I] \cong \Oo_E / I$. Therefore $F_I \subset K$ because of Theorem \ref{field of moduli IN}, and it follows that we can write 
\begin{equation}
\Br(E,K) = \{ \Oo_E / I \colon F_I \subset K \}   \tag{*}
\end{equation}
or less precisely 
\begin{equation} 
    \Br(E,K)' = \{ \Oo_E / I \colon [F_I \colon E] \,\, \text{divides} \,\, [K \colon E]  \}. \tag{**}
\end{equation}
One then uses the explicit formula for $[F_I \colon E]$ (Theorem \ref{invariantformula}) to find all the possible $I \subset \Oo_E$ such that $ [F_I \colon E]$ divides $[K \colon E]$. Note that this strategy is analogous to the one employed by Silverberg to study torsion points on CM abelian varieties in \cite{MR3778184}. We give examples of both approaches: we use (**) to give explicit lists covering the cases when $E= \Q(i), \Q(\sqrt{-3})$ and $K=E$, whereas we use (*) to give a simple criterion for $\Oo_E/ I$ to be a possible Brauer group when $E$ is quadratic imaginary and $K$ is the Hilbert class field of $E$ (Theorem \ref{rachelk}).

\subsection*{Acknowledgments}

The idea of studying Brauer groups of CM K3 surfaces the same way one studies torsion points of CM abelian varieties was suggested to me by my supervisor Alexei Skorobogatov. I am hence most grateful to him, for his infinite patience and many insights and discussions. Without him, this could have not been possible. A special thanks also goes to Martin Orr, who spotted some mistakes in the early drafts and helped me to fix some of them. Finally, I would like to thank Gregorio Baldi, Salvatore Floccari and Matteo Tamiozzo for many stimulating discussions, ideas, and for carefully reading the first drafts of this document.  

\subsection*{Notation}
\begin{itemize}
\item If $K$ is a field, we denote by $\overbar{K}$ a fixed algebraic closure and by $G_K$ its absolute Galois group. For every scheme $X/K$ we write $\overline{X}$ for the base change $X \times_K \overbar{K}$. 
\item We denote by $\A$ the ring of ad\`{e}les over $\Q$ and by $\A_f \subset \A$ the subring of finite ad\`{e}les. Moreover, we denote by $\widehat{\Z} \subset \A_f$ the pro-finite completion of $\Z$, so that $\widehat{\Z} \otimes \Q = \A_f$. \item For any number field $K$, we denote by $\Oo_K$ its ring of integers, by $\A_K   \coloneqq \A \otimes_\Q K$ the ring of ad\`{e}les over $K$ and by $\A_{K,f}   \coloneqq \A_f \otimes_\Q K \subset \A_K$ the subring of finite ad\`{e}les. We also adopt the notation $\widehat{\Oo}_E   \coloneqq \Oo_E \otimes \widehat{\Z}$.
\item In general for any finitely generated abelian group $A$, we denote by $\widehat{A} := A \otimes_\Z \widehat{\Z}$ its profinite completion. We extend this notation also to the cohomology of K3 surfaces, for example, $\widehat{T}(X)$ will denote the profinite completion of the transcendental lattice $T(X)$ of a K3 surface $X/ \C$.
\item If $A$ is a $\Z-$module, we write $A_\Q$ for $A \otimes_\Z \Q$. 
\item For any set $S$, $|S|$ will denote its cardinality, and for any two integers $a,b \in \Z$ we write $a | b$ for `$a$ divides $b$'. 
\item If $A$ is an abelian group and $n$ is an integer, we write $A[n]$ for the $n-$torsion of $A$. 
\item By a lattice we mean a free, finitely generated $\Z$-module $N$ endowed with a symmetric, bilinear, non-degenerate pairing $N \times N \rightarrow \Z$. Its signature is the signature of $N_\R$. 
\end{itemize}

\section{K3 surfaces with CM and their Hodge structures} 
\subsection{Some preliminaries on Hodge theory} 
We begin by reviewing the notion of integral and rational Hodge structures. We mainly follow Moonen's survey \cite{MoonenMT} and Chapter 2 in Milne's notes on Shimura varieties appearing in \cite{Arthur2005HarmonicAT}. The acquainted reader can skip directly to the next subsection. 
\begin{defi} \label{Definition of Hodge structure}
Let $V$ be a finitely generated, free $\Z$-module. An integral Hodge structure of weight $m \in \Z$ on $V$ is a decomposition \begin{equation}
V \otimes_\Z \C = \bigoplus_{p+q =m}V^{p,q}
\end{equation}
such that $\overline{V^{p,q}} = V^{q,p}$. Here, $p$ and $q$ are allowed to vary in $\Z$, and the bar denotes the complex conjugation. One says that the Hodge structure $V$ is of type $T$, where $T \subset \Z^2$, if $V^{p,q} \neq 0$ precisely when $(p,q) \in T$. 
\end{defi}
Similarly, one defines the concept of \textit{rational} Hodge structure. An equivalent definition of Hodge structure is due to Deligne and it is phrased in the language of algebraic groups. The \textit{Deligne torus} is the real algebraic group $\mathbb{S}  \coloneqq \text{Res}_{\C / \R} \mathbb{G}_m$, where `$\text{Res}$' denotes the Weil restriction of scalars, so that $\mathbb{S}(\R) = \C^\times$. The character group $X^*(\mathbb{S})$ is generated by the two characters $z$ and $\overbar{z}$, that act on the $\R$-points of $\mathbb{S}$, respectively, as the identity and the complex conjugation. One also has the following important characters and cocharacters:
\begin{itemize}
\item The \textit{weight cocharacter} $w \colon \mathbb{G}_{m, \R} \rightarrow \mathbb{S}$ given, on $\R-$points, by the natural inclusion $\R^\times \rightarrow \C^\times$;
\item The \textit{Norm character} $\text{Nm} \colon \mathbb{S} \rightarrow \mathbb{G}_{m,\R}$ given by $z \overline{z}$;
\item The cocharacter $\mu \colon \mathbb{G}_{m,\C} \rightarrow \mathbb{S}_\C$ defined to be the only cocharacter such that $\overline{z} \circ \mu = 1$ and $z \circ \mu = \Id$. 
\end{itemize}
It follows that one can define a Hodge structure on $V$ of weight $m \in \Z$ as a morphism of algebraic groups $$h \colon \mathbb{S} \rightarrow \Gl(V)_\R$$ such that $h \circ w \colon \mathbb{G}_{m, \R} \rightarrow \Gl(V)_\R$ is given by $z \mapsto z^{-m} \Id$. In this case, $V^{p,q}$ corresponds to $$\{ v \in V_\C \colon \text{for every} \,\, (z_1,z_2) \in \mathbb{S}(\C) = \C^\times \times \C^\times \,\, \text{one has}\,\, h_\C(z_1,z_2) \cdot v = z_1^{-p} z_2^{-q} v \}.$$ \\
The cohomology groups of smooth projective varieties are always endowed with a Hodge structure thanks to Hodge theory, and in some cases of interest, like abelian varieties or K3 surfaces, the Hodge structure determines the variety itself. If $X$ is a smooth projective variety, there is a natural splitting 
\begin{equation} \label{Hodge decomposition}
 \mathrm{H}^{n}_B(X, \Z) \otimes_\Z \C \cong \bigoplus_{p+q=n}H^{p,q}(X),
\end{equation}
with $$\mathrm{H}^{p,q}(X)   \coloneqq \mathrm{H}^q(X, \Omega^p_X),$$
where $ \mathrm{H}^{n}_B(X, \Z) $ denotes the $n$-th Betti (or singular) cohomology group of $X$. 

A morphism between two Hodge structures $V$ and $W$ is a $\Z$-linear map $f \colon V \rightarrow W$ such that $f_\C \colon V_\C \rightarrow W_\C$ maps $V^{p,q}$ to $W^{p,q}$. The definition implies that in order for a morphism to exist  $V$ and $W$ must have the same weight (one can also define weighted morphisms to obviate this problem). A \textit{sub-Hodge structure} $W \subset V$ is an inclusion of $\Z$-modules $W \hookrightarrow V$ that is also a morphism of Hodge structures. Usually, the map  $W \hookrightarrow V$ is primitive, i.e., the quotient $V/W$ is torsion-free. If $V$ is a Hodge structure of weight $n$, then the dual $V^\vee = \Hom(V,\Z)$ has a natural Hodge structure of weight $-n$. Similarly, if $V$ and $W$ are two Hodge structures of weight $n$ and $m$ respectively, then also $V \otimes_\Z W$ admits a natural Hodge structure of weight $n+m$. In particular, $\Hom(V,W) = V^\vee \otimes_\Z W$ is a Hodge structure of weight $m-n$. Some trivial but extremely important Hodge structures are given by the \textit{Tate-twists}. These are denoted by $\Z(n)$, with $n \in \Z$, and consists of the $\Z$-module $(2 \pi i)^n \Z \subset \C$ endowed with the only Hodge-structure of type $(-n,-n)$. Tate-twists allow one to shift the weight of Hodge structures, in the sense that if $V$ is a integral Hodge structure of weight $m$, then $V(n)   \coloneqq V \otimes_\Z \Z(n)$ is an integral Hodge structure of weight $m-2n$. Similarly, one can define $\Q(n)   \coloneqq \Z(n) \otimes \Q$. The "$(2 \pi i)$" in the definition comes from the exponential sequence \begin{equation}
0 \rightarrow (2 \pi i) \Z \rightarrow \C \xrightarrow{exp} \C^\times \rightarrow 0,
\end{equation} 
and plays a role mostly when computing periods.
\begin{defi}(Hodge classes)
Let $V$ be a Hodge structure of weight $0$. The space of Hodge classes of $V$ is $$\text{Hdg}(V)   \coloneqq V \cap V^{0,0}.$$
\end{defi}
If $X/ \C$ is a smooth projective variety and $\text{CH}^n(X)$ is its Chow group of codimension-$n$ cycles then the cycle class map $$\mathrm{ch}_n \colon \text{CH}^n(X) \rightarrow \mathrm{H}^{2n}(X,\Z)(n),$$
naturally lands in the space of Hodge classes of $ \mathrm{H}^{2n}(X,\Z)(n)$. When $n=1$, we have that $\text{CH}^1(X) = \Pic(X)$, and Lefschetz proved that $c_1(\text{CH}^1(X)) = \text{Hdg}(\mathrm{H}^2(X,\Z)(1))$. The image $c_1(\text{CH}^1(X))$ is the N\'{e}ron-Severi group of $X$ and it is denoted by $\NS(X)$. As firstly showed by Atiyah and Hirzebruch, the equality $\mathrm{ch}_n(\text{CH}^n(X)) = \text{Hdg}(\mathrm{H}^{2n}(X,\Z)(n))$ does not need to hold when $n>1$, but this is mostly due to primitivity issues, and in fact one has the following conjecture.
\begin{conj}[Hodge conjecture]
 For any $X$ and any $n$ as above one has $$\mathrm{ch}_n(\text{CH}^n(X)) \otimes_\Z \Q= \mathrm{Hdg}(\mathrm{H}^{2n}(X,\Q)(n)).$$
\end{conj} 
The last two notions in Hodge theory that we introduce are polarizations and Mumford-Tate groups. One defines first the Weil operator.
\begin{defi}(Weil operator)
Let $V$ be a Hodge structure, the Weil operator is the morphism $C \colon V_\C \rightarrow V_\C$ given by multiplication by $i^{p-q}$ on $V^{p,q}.$ Since $\overline{V^{p,q}} = V^{q,p}$, one can check that $C$ respects $V_\R$, i.e., it is defined over $\R$. Moreover, if the Hodge structure is given by $h \colon \mathbb{S} \rightarrow \Gl(V)_\R$, then $C = h(i)$. 
\end{defi}
Note that $C^2 = (-1)^m$, where $m$ is the weight of $V$. \begin{rmk} \label{remark Weil operator}
The Weil operator commutes with morphisms of Hodge structures, in the sense that if $f \colon V \rightarrow W$ is a morphism of Hodge structures, then $f \circ C_V = C_W \circ f$, where $C_V$ and $C_W$ denote, respectively, the Weil operator on $V$ and $W$.
\end{rmk} 
\begin{defi}
Let $V$ be an integral Hodge structure of weight $m$. A polarization on $V$ is a morphism of Hodge structures $$\phi \colon V \otimes V \rightarrow \Z(-m)$$
such that the bilinear form on $V_\R$ given by $(x,y) \mapsto (2 \pi i)^m \phi(Cx \otimes y)$ is symmetric and positive-definite.  
\end{defi}
The Hodge structures coming from smooth, projective varieties always admit, usually many, polarizations. Finally, the Mumford-Tate group attached to a Hodge structure can be defined in two different ways, either via the formalism of Tannakian categories, or in more down-to-earth terms. We prefer this latter option, and refer the reader to the relevant article by Deligne in \cite{MR654325} for an introduction to Tannakian categories and related concepts.
\begin{defi}
Let $V$ be a rational Hodge structure given by the morphism $h \colon \mathbb{S} \rightarrow \Gl(V)_\R$. The Mumford-Tate group of $V$, denoted by $\text{MT}(V)$, is defined to be the smallest algebraic subgroup of $\Gl(V)$ such that $h$ factorizes as $h \colon \mathbb{S} \rightarrow \MT(V)_\R \hookrightarrow \Gl(V)_\R.$ 
\end{defi}
Note that $\MT(V)$ is connected since $\mathbb{S}$ is connected and, moreover, if $V$ is a polarization, then $\MT(V)$ is reductive (see Proposition 4.9. in Moonen's notes). Mumford-Tate groups allow us to detect sub-Hodge structures in tensor constructions: let $\lambda \subset \Z^2$ be a finite subset, $\lambda = \{ (a_i,b_i) \}_{i=1, \cdots, n}$, and define $$V^\lambda   \coloneqq \bigoplus_{i=1}^n V^{\otimes a_i} \otimes (V^\vee)^{b_i}.$$ We have a natural action of $\MT(V)$ on $V^\lambda$. 
\begin{prop}
A rational subspace $W \subset V^\lambda$ is a sub-Hodge structure if and only if it is invariant under the action of $\MT(V)$. Moreover, an element $t \in V^\lambda$ is a Hodge class if and only if it is fixed by $\MT(V)$. 
\end{prop}
\begin{defi} \label{Special HS}
Let $V$ be a rational Hodge structure. Following Milne, we say that $V$ is \textit{special} if its Mumford-Tate group is a torus. 
\end{defi}
This definition is very similar to Definition \eqref{def of cm}. The only differences are some technical conditions that are automatically satisfied for K3 surfaces, but need to be imposed for general Hodge structures (see Definition 12.5 in Milne's notes). Let $V$ be a special Hodge structure and let $T$ be its Mumford-Tate group, that by definition is an algebraic torus defined over $\Q$. The cocharacter $\mu$ introduced before gives us a morphism of algebraic tori $\mu' \colon \mathbb{G}_{m,\C} \rightarrow T_\C$. 
\begin{defi} \label{Definition reflex field}
Let $h \colon \mathbb{S} \rightarrow \Gl(V)_\R$ be a special Hodge structure, and let $T$ be its Mumford-Tate group. The reflex field of $V$, denoted by $E(h)$, is the field of definition of the cocharacter $\mu' \colon \mathbb{G}_{m,\C} \rightarrow T_\C$.
\end{defi}
\subsection{K3 surfaces with complex multiplication}
 Let $X/\mathbb{C}$ be a projective $K3$ surface and let $\mathrm{H}^{2}_{B}(X, \mathbb{Z})$ be its second Betti cohomology group. The topological intersection form $$\mathrm{H}^2_B(X,\Z) \times \mathrm{H}^2_B(X,\Z) \rightarrow \Z$$ turns $\mathrm{H}^2_B(X,\Z)$ into a lattice, that is unimodular by Poincar\'{e} duality. Moreover, it follows from the Hodge index theorem that its signature is $(3_+,19_-)$. The isomorphism class of this lattice does not depend on the chosen $X$, since every two K3 surfaces are deformation equivalent (Chapter 7, Theorem 1.1. of \cite{MR3586372}); it is usually denoted by $\Lambda$ and named the \textit{K3 lattice}. This cohomology group naturally carries a Hodge structure of weight $2$, but for our purposes it is more natural to work with the twist $\mathrm{H}^2_B(X,\Z)(1)$, of weight zero. The transcendental lattice of $X$, denoted by $T(X)$, is defined as the orthogonal complement of $\NS(X)$ with respect to the intersection form on $\mathrm{H}^{2}(X, \mathbb{Z})(1)$. Therefore, $T(X)$ is a sub Hodge structure of weight zero, and the embedding $T(X) \hookrightarrow \mathrm{H}^2(X,\Z)(1)$ is primitive. Moreover, one can show that $T(X)_\Q$ is an irreducible rational Hodge structure, at least when $X$ is projective.
\begin{defi} 
We say that $X$ has complex multiplication (CM) if the Mumford-Tate group $\MT(X)$ of $T(X)_{\Q}$ is abelian .
\end{defi}
\begin{rmk}
It is easy to show that the inclusion $T(X)_\Q \subset \mathrm{H}^{2}(X, \Q)(1)$ induces an identification between the Mumford-Tate group of $T(X)_\Q$ and the one of $\mathrm{H}^{2}(X, \Q)(1)$.
\end{rmk}
In this case (see Zarhin \cite{MR697317}) one can prove that $E(X)   \coloneqq \End_{\Hdg}(T(X)_{\Q})$ is a CM field (where complex conjugation acts like the adjunction with respect to the intersection form) and that $\dim_{E(X)}T(X)_{\Q} = 1$. 
Since the elements of $E(X)$ are endomorphisms of Hodge structures, we obtain a natural map $\sigma_X \colon E(X) \rightarrow \End(\mathrm{H}^{1,-1}(X)) = \C$. Since $T(X)_\Q$ is irreducible, Schur's lemma shows that $\sigma_X $ is actually an embedding. Therefore, $E(X)$ is always naturally a subfield of $\C$, and in Proposition  \ref{reflex} we show that it corresponds to the reflex field of the Hodge structure $T(X)_\Q$. 
The Hodge structure $T(X)_\Q$ can be described using the torus $\Res_{E(X) / \Q} \mathbb{G}_{m}$, whose $\Q$-points are naturally identified with $E(X)^\times$. If we decompose $$(\Res_{E(X) / \Q} \mathbb{G}_{m}) (\C ) = \bigoplus_{\sigma : E(X) \hookrightarrow \C}  \C^{\times}_{\sigma}$$
where 
$$ \C^{\times}_{\sigma}   \coloneqq \{ z \in (\Res_{E(X) / \Q} \mathbb{G}_{m}) (\C ) \, \colon \, \forall e \in E(X), \, e \cdot z = \sigma(e)z  \} $$
we have that the Hodge structure on $T(X)_{\Q}$ is given by the morphism of algebraic groups (defined over $\R$) whose action on $\C$-points is 
\begin{align*}
h \colon \mathbb{S}(\C) \cong \C^{\times} \times \C^{\times} &\to \C_{\sigma_X}^{\times}  \oplus  \cdots  \oplus  \C^{\times}_{\overline{\sigma_X}} = \Res_{E(X) / \Q} \mathbb{G}_{m}(\C) \subset \GL(T(X))(\C) \\
(z,w) &\mapsto (  zw^{-1},1, \cdots,1, wz^{-1} ),
\end{align*} 
where $\mathbb{S}   \coloneqq  \Res_{\C / \R} \mathbb{G}_{m}$ is the Deligne torus and $\sigma_X$ is the distinguished embedding $E(X) \hookrightarrow \C$. Denote by $\U_{E(X)}$ the $E(X)$-linear unitary subgroup of $\Res_{E(X) / \Q} \mathbb{G}_{m}$, i.e. the one cut out by the equation $e \bar{e} = 1$. Zarhin in his paper \cite{MR697317} proved that inside $\GL(T(X))_{\Q}$ we have an identification $$\MT(T(X)) = \U_{E(X)}.$$
When taking $\C$-points, the natural inclusion $\U_{E(X)} \subset \Res_{E(X) / \Q} \mathbb{G}_{m}$ becomes $$\U_{E(X)} (\C) = \bigg\{ (z)_{\sigma} \in \bigoplus_{\sigma : E(X) \hookrightarrow \C}  \C^{\times}_{\sigma} \colon z_{\sigma} z_{\overline{\sigma}} =1 \bigg\}.$$
Therefore, the cocharacter $\mu$ associated to $h$ is the map

\begin{align}\label{eqn cocharacter}
\mu \colon \mathbb{G}_{m} (\C) &\to \C_{\sigma_X}^{\times}  \oplus  \cdots  \oplus  \C_{\overline{\sigma_X}}^{\times} \\ \nonumber
z &\mapsto ( z,1, \cdots, 1, z^{-1} )
\end{align}
with image inside $U_{E(X)}(\C)$.
\begin{prop} \label{reflex}
The reflex field of the Hodge structure $T(X)_{\Q}$ is $\sigma_X(E(X)) \subset \C$. 
\end{prop}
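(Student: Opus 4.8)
The plan is to recall the general definition of the reflex field of a rational Hodge structure (or, equivalently, of a cocharacter of a torus) and then to unwind it in the very explicit situation at hand, where the Mumford--Tate torus is $\U_{E(X)}$ sitting inside $\Res_{E(X)/\Q}\G_m$. Write $E=E(X)$ for brevity. The Hodge structure $T(X)_\Q$ is defined by the morphism $h\colon\DT\to \U_{E,\R}$ described above, with associated cocharacter $\mu\colon\G_{m}\to \U_{E}$ defined over $\C$ by the formula in \eqref{cocharacter}. By definition, the reflex field $E^{*}$ of $T(X)_\Q$ is the field of definition of the $\GL(T(X)_{\bar\Q})$-conjugacy class of $\mu$; since $\MT(T(X))=\U_{E}$ is a torus, the conjugacy class of $\mu$ is a single cocharacter, so $E^{*}$ is simply the field of definition of $\mu\colon\G_m\to \U_{E,\bar\Q}\subset\Res_{E/\Q}\G_{m,\bar\Q}$.

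First I would make the Galois action on cocharacters of $\Res_{E/\Q}\G_m$ explicit. Over $\bar\Q$ (or already over the Galois closure $\tilde E$ of $E$) one has $\Res_{E/\Q}\G_m\otimes\bar\Q\cong\prod_{\tau\colon E\hookrightarrow\C}\G_m$, and $\Gal(\bar\Q/\Q)$ permutes the factors through its action on the embeddings $\tau$. A cocharacter is thus a tuple of integers $(n_\tau)_\tau$, and $\rho\in\Gamma_\Q$ sends $(n_\tau)_\tau$ to $(n_{\rho^{-1}\tau})_\tau$. The cocharacter $\mu$ corresponding to $h$ has $n_{\sigma_X}=1$, $n_{\overline{\sigma_X}}=-1$, and $n_\tau=0$ for all other $\tau$ (this is exactly \eqref{cocharacter}, and one checks that complex conjugation sends it to itself so that it indeed lands in $\U_{E}$, consistently with $n_{\bar\tau}=-n_\tau$).

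Then the field of definition of $\mu$ is the fixed field of the stabiliser $\{\rho\in\Gamma_\Q : n_{\rho^{-1}\tau}=n_\tau\ \forall\tau\}$. Because the nonzero values $1$ and $-1$ are distinct, this stabiliser is exactly $\{\rho : \rho\sigma_X=\sigma_X\}$ (fixing $\sigma_X$ forces fixing $\overline{\sigma_X}=\overline{\sigma_X}$, and the remaining $n_\tau$ are all equal to $0$). By the fundamental theorem of Galois theory this fixed field is $\sigma_X(E)\subset\C$. This identifies $E^{*}=\sigma_X(E(X))$ and completes the proof. The one point that requires a little care is matching the "abstract" definition of the reflex field of a Hodge structure (via the conjugacy class of $\mu$, a priori living in the ambient $\GL(T(X))$) with the concrete torus computation: here the key input is Zarhin's theorem $\MT(T(X))=\U_{E}$, which guarantees that the conjugacy class is a single cocharacter landing in $\Res_{E/\Q}\G_m$, so the reflex field can be read off purely from the combinatorics of the embeddings of $E$; apart from that the argument is a direct unwinding of definitions.
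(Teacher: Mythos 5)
Your proposal is correct and follows essentially the same route as the paper: both identify the reflex field as the field of definition of the cocharacter $\mu$ of equation \eqref{cocharacter} and observe that an automorphism fixes $\mu$ (equivalently, fixes the tuple of exponents indexed by the embeddings of $E(X)$) if and only if it fixes $\sigma_X$, whence the fixed field is $\sigma_X(E(X))$. Your extra remarks — making the Galois permutation action on the factors of $\Res_{E(X)/\Q}\mathbb{G}_m$ explicit and noting that the conjugacy class of $\mu$ is a single cocharacter because the Mumford--Tate group is a torus — are just a more detailed unwinding of the same two-line argument the paper gives.
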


\begin{proof}
By definition, the reflex field of $T(X)_{\Q}$ is the field of definition of the cocharacter $\mu$. By the discussion above, we see that $\tau \in \Aut(\C)$ fixes $\mu$ if and only if $\tau \sigma_X = \sigma_X$, i.e. if and only if $\tau \in \Aut(\C / \sigma_X(E(X)))$.  
\end{proof}
\begin{rmks}
\begin{itemize}
\item Note that the reflex field of a CM abelian variety is usually not isomorphic to the CM field $E$ (but it is as soon as $E/ \Q$ is Galois). For example, if the dimension of the variety is high and $E/ \Q$ has no automorphisms, the degree of the reflex field can be of the order of magnitude of $[E \colon \Q]!$.
\item The embedding $\sigma_X$ normalizes the action of $E(X)$ in the sense that if $\alpha \in \sigma_X(E(X))$, then the Hodge endomorphism $\sigma_{X}^{-1}(\alpha)$ acts as multiplication by $\alpha$ on the $(1,-1)$ part of cohomology. 
\end{itemize}
\end{rmks}

One can show that a CM field $E$ can be spanned, as $\Q$-vector spaces, by elements $\alpha \in E$ such that $\alpha \overline{\alpha} =1$ (for a proof, see Proposition 4.4. in  \cite{HuyCM}). In $E(X)$, these correspond to rational Hodge isometries, since for every $v,w \in T(X)_{\Q}$ we have $$(\alpha v,\alpha w)_{X} = (\alpha \overline{\alpha} v, w)_{X} = (v,w)_X.$$

As proved by Buskin in \cite{Buskin}, for any $\alpha \in E$ such that $\alpha \overline{\alpha} = 1$ there exist integral algebraic cycles $C_{i} \subseteq X \times X$ and rational numbers $ q_{i} \in \Q $ for $i=1, \cdots, n$ such that the cohomology class of $\alpha$ in $\mathrm{H}^{4}(X \times X, \mathbb{Q})(2)$ can be expressed as $$\alpha = \sum_{i} q_{i} [C_{i}].$$
(Here, we denote by $[C_i]$ the image of $C_i$ under the cycle class map $\text{CH}^2(X \times X) \rightarrow \mathrm{H}^4(X \times X, \Q)(2)$). Buskin result builds on the previous work of Mukai \cite{mukai} (who proves the same statement but only for K3 surfaces with $\rho(X) \geq 11$) and of Nikulin \cite{NikulinHodgeK3}, who improved Mukai results by comprehending all K3 surfaces with $\rho(X) \geq 5$. Together with the fact that $E$ is spanned by isometries, this implies that the Hodge conjecture is true for $X \times X$, where $X/\C$ is a K3 surface with complex multiplication. In particular, if $X$ is defined over $K\subseteq \C$, one can ask over which extension of $K$ a class $\alpha \in E(X_{\C})$ is defined as well. 
\begin{defi} \label{defined/ K}
Let $X/K \subseteq \C$ (this notation means that $K$ is considered as a subfield of $\C$) with CM over $\C$. 
\begin{enumerate}
\item For every $\tau \in \Aut(\C)$ we define the map $\tau^{ad} \colon E(X_\C) \rightarrow E(X^{\tau}_\C)$ as $$\tau^{ad}(\alpha) : = \sum_{i} q_{i} \tau^*[ C_{i}],$$ where $ E(X_\C) \ni \alpha = \sum_{i} q_{i} [C_{i}] $ and $\tau^{*}$ denotes the pullback of algebraic cycles via the isomorphism of schemes $\tau \colon X_\C^{\tau} \rightarrow X_\C$ (this notation is borrowed from Rizov paper). 
\item We say that $\alpha \in E(X_\C)$ is defined over $K$ if for every $\tau \in \Aut(\C / K)$ $$\tau^{ad} (\alpha ) = \alpha.$$ 
\end{enumerate}
\end{defi}

\begin{defi} \label{CM over K}
Let $X$ be a K3 surface over a field $K$ such that $X_\C$ has CM for an embedding $K \hookrightarrow \C$. We define $E(X)$ to be the subfield of $E(X_\C)$ of endomorphism that are defined over $K$. We say that \textit{$X$ has CM over $K$} if $E(X) = E(X_{\C}).$
\end{defi}
\begin{rmk}
In order to define $E(X)$ one has to choose an embedding $K \hookrightarrow \C$, but one can check that $E(X)$ does not depend on the chosen embedding.
\end{rmk}
We will now give an equivalent condition for $X/K$ to have complex multiplication over $K$, similar to the one for abelian varieties.
\begin{prop} \label{reflex field and CM}
Let $X/K$ be as in Definition \ref{CM over K} such that $X_\C$ has CM, and let $\iota \colon K \hookrightarrow \C$ be an embedding. Then $X$ has CM over $K$ if and only if $$\sigma_{X_\C}(E(X_\C)) \subseteq \iota(K),$$ i.e. if and only if $\iota(K)$ contains the reflex field of $X_\C$. Also, the condition $\sigma_{X_\C}(E(X_\C)) \subseteq~\iota(K)$ does not depend on $\iota$. 
\end{prop}
\begin{proof}
Let $\tau \in \Aut(\C)$ be an automorphism of the complex numbers and consider the base change $X_{\C}^{\tau}   \coloneqq X_{\C} \times_{\tau} \Spec\C$. Again, we have a natural isomorphism $\tau^{\text{ad}} \colon E(X_{\C} ) \xrightarrow{\sim} E(X_{\C}^{\tau})$, given by conjugation of algebraic cycles. If $\omega \in T^{1,-1}(X_{\C})$ is a non-zero $2-$form, we can conjugate it via $\tau$ (since it is an algebraic object) to obtain a non zero $2-$form $\omega^{\tau}$ on $T^{1,-1}(X_{\C}^{\tau})$.
Denote by $\sigma_{X} \colon E(X_{\C}) \hookrightarrow \C$ and by $\sigma_{X^\tau} \colon E(X_{\C}^{\tau}) \hookrightarrow \C$ the two embeddings given by evaluation on a non-zero $2-$form and let $\alpha \in \sigma_{X}(E(X_{\C}))$; we have: $$(\tau^{ad} \sigma_{X}^{-1} \alpha)\omega^{\tau} = ((\sigma_{X }^{-1} \alpha) \omega)^{\tau} = (\alpha \omega)^{\tau} = \tau(\alpha) \omega^{\tau}$$
i.e.
\begin{equation}\label{diagramReflex}
\sigma_{X^{\tau}} \circ \tau^{ad} = \tau \circ \sigma_{X}.
\end{equation}
Meaning that the following diagram commutes
\begin{center}
\begin{tikzcd} 
E(X_{\C} ) \arrow[d, "\sigma_{X}"] \arrow[r, "\tau^{ad}"] & E(X_{\C} ^{\tau}) \arrow[d, "\sigma_{X ^{\tau}}"]  \\
\C \arrow[r, "\tau"] & \C .
\end{tikzcd}
\end{center}
 If $\tau$ fixes $K$, then $X_{\C}^{\tau} = X_{\C}
 $, so that $E(X) = E(X_{\C}) $ if and only if the map $\tau^{ad} \colon E(X_{\C}) \rightarrow E(X_{\C})$ is the identity. But the diagram above tells us that this happens if and only if $\tau$ fixes also $\sigma_{X}(E(X_{\C}))$. Finally, to prove that the condition $\sigma_{X_\C}(E(X_\C)) \subseteq \iota(K)$ does not depend on $\iota$, we need to show that it is true for one embedding if and only if it is true for all. But if $\tau \in \Aut(\C)$ is any element, equation \ref{diagramReflex} implieas that $$ \sigma_{X^{\tau}}(E(X_{\C}^\tau)) = \tau (\sigma_{X}(E(X_\C))),$$ so that we can conclude the proof. 
 \end{proof}

\begin{defi}
Let $X/\C$ be a K3 surfaces with CM. We define the order $\mathcal{O}(X)   \coloneqq \End_{\Hdg}(T(X)) \subset E(X)$, and we say that $X$ is \textit{principal} if $\mathcal{O}(X) $ is the maximal one. 
\end{defi}
\begin{rmk}
From now on, we will only consider $K3$ surfaces with CM that are \textit{principal}. 

\end{rmk}
One has to prove that the ring $\mathcal{O}(X)$ is an algebraic invariant of $X$, i.e. that it depends only on the scheme structure of $X$. What we mean by this is the following: consider $X/k$ any K3 surface, and suppose there exists an embedding $\iota \colon k \hookrightarrow \C$. Base-changing $X$ via $\iota$, we obtain a K3 surface $ X^\iota$ over $\C$, and we can compute the ring $\mathcal{O}(X^\iota) = \End_{\Hdg}(T(X^\iota))$. We need to prove that this ring does not depend on $\iota$. The analogous statement for abelian varieties is trivial, as the analogue of $\End_{\Hdg}(T(X))$ would be the endomorphism ring of the variety, and conjugation of an endomorphism is still an endomorphism. In the $K3$ surface case, though, it is not clear that if $\alpha \in \mathcal{O}(X) \subset E(X)$ then also $\tau^{ad} (\alpha) \in \mathcal{O}(X^{\tau}) \subset E(X^{\tau})$ (we only know, so far, that $\tau^{ad} (\alpha) \in E(X^{\tau})$).

Before stating our next result, note that one can define $\Oo(X)$ and $E(X)$ in the same fashion for any K3 surface $X$, and $E(X)$ naturally lives in the Hodge classes of $\mathrm{H}^4_B(X \times X, \Q)(2).$ As a byproduct of the work of Deligne in (\cite{MR654325}, Theorem 2.11) and \cite{MR0296076} one knows that every Hodge class in a product of K3 surfaces (and abelian varieties) is \textit{absolute Hodge}, a fact that implies that there is always a natural map $\tau^{ad} \colon E(X) \rightarrow E(X^\tau)$ like in \eqref{defined/ K}.
\begin{prop}[Invariance of $\mathcal{O}(X)$] \label{invariance O}
Let $X / \C$ be any K3 surface and let $\tau \in \Aut(\C)$. Then the natural map $\tau^{ad} \colon E(X) \rightarrow E(X^{\tau})$ sends $\mathcal{O}(X)$ isomorphically to $\mathcal{O}(X^{\tau})$.
\end{prop}

\begin{proof}
Consider the two cycle class maps $$\mathrm{ch}_{B} : E(X) \hookrightarrow \Hdg^4(X \times X)(2) \subset \mathrm{H}^{4}_{B}(X \times X, \Q(2)) $$
$$\mathrm{ch}_{\et} : E(X) \hookrightarrow \mathrm{H}^{4}_{\et}(X \times X, \A_{f}(2)).$$
Where for any $k\geq 0$, $\mathrm{H}^{\bullet}_{\et}(- , \A_{f}(k)) = \mathrm{H}^{\bullet}_{\et}(- , \widehat{\Z}(k)) \otimes \Q$ and 
$$ \mathrm{H}^{\bullet}_{\et}(- , \widehat{\Z}(k)) = \varprojlim_n \mathrm{H}^{\bullet}_{\et}(- , \mu_n^{\otimes k})$$
denote the \et ale cohomology groups. 
For every $\tau \in \Aut(\C)$ we have a well-defined map $$\tau_{B} \colon \Hdg^{4}(X \times X) \rightarrow \Hdg^{4}(X^{\tau} \times X^{\tau})$$
due to the fact that every Hodge class is absolute Hodge, and a natural inclusion $$\Hdg^{4}(X \times X) \hookrightarrow \mathrm{H}^{4}_{\et}(X \times X, \A_{f}(2))$$
given by $\Hdg^{4}(X \times X) \hookrightarrow \mathrm{H}^{4}_{B}(X \times X, \Q(2))$ followed by the inclusion $\mathrm{H}^{4}_{B}(X \times X, \Q(2)) \hookrightarrow \mathrm{H}^{4}_{\et}(X \times X, \A_{f}(2)) $ given by the comparison isomorphism  
\begin{equation} \label{Comparison}
    \mathrm{H}^{4}_{B}(X \times X, \Z(2)) \otimes \widehat{\Z} \cong  \mathrm{H}^{4}_{\et}(X \times X, \widehat{\Z}(2)) .
\end{equation}
These maps belong to the following commutative diagram: 
\begin{center}
\begin{tikzcd} 
E(X) \arrow[r, "\tau^{ad}"] \arrow[hookrightarrow, d, "\mathrm{ch}_{B}"]
& E(X^{\tau}) \arrow[hookrightarrow, d, "\mathrm{ch}_{B}"] \\
\Hdg^{4}(X \times X) \arrow[r, "\tau_{B}"] \arrow[hookrightarrow, d,]
& \Hdg^{4}(X^{\tau} \times X^{\tau}) \arrow[hookrightarrow, d,] \\
\mathrm{H}^{4}_{\et}(X \times X, \A_{f}(2)) \arrow[r, "\tau^*"]
& \mathrm{H}^{4}_{\et}(X^{\tau} \times X^{\tau}, \A_{f}(2)).
\end{tikzcd}

\end{center}
where $\tau^{*}$ is the natural pullback in \'{e}tale cohomology via the isomorphism of schemes $\tau \colon X^{\tau} \to X$, and the composition of the vertical arrows is $\mathrm{ch}_{\et}$. Let $\widehat{T}(X) \subset  \mathrm{H}^{2}_{\et}(X, \widehat{\Z}(1))$ be the orthogonal of $\NS(X)$, so that $\widehat{T}(X) \cong T(X) \otimes \widehat{\Z}$ via the comparison isomorphism \ref{Comparison}.
Consider the isomorphism of $\widehat{\Z}$-lattices $$ \tau^* \colon \widehat{T}(X) \rightarrow \widehat{T}(X^\tau) $$ induced by Galois, and let $f \in \mathcal{O}(X)$. The commutativity of the above diagram tells us that $$\tau^{ad}(f) = \tau^* \circ f \circ \tau^{*-1},$$ this equality happening in $\mathrm{H}^{4}_{\et}(X^{\tau} \times X^{\tau}, \A_{f}(2)).$
Now, $\tau^{ad}(f) (T(X^{\tau})) \subset T(X^{\tau})_\Q$ since $\tau^{ad}(f) \in E(X^{\tau})$, and $[\tau^* \circ f \circ \tau^{*-1}] \widehat{T}(X^\tau) \subset \widehat{T}(X^\tau)$ since $ \tau^* \colon \widehat{T}(X) \rightarrow \widehat{T}(X)$ is an isomorphism. Thus, the equality $\tau^{ad}(f) = \tau^* \circ f \circ \tau^{*-1}$ implies that $\tau^{ad}(f) (T(X^{\tau})) \subset  T(X^{\tau})_\Q \cap  \widehat{T}(X^\tau)=  T(X^{\tau})$, i.e. $\tau^{ad}(f) \in \Oo(X^{\tau})$. Hence the map $$\tau^{ad} \colon E(X) \rightarrow E(X^{\tau})$$ restricts to an isomorphism between $\mathcal{O}(X)$ and $\mathcal{O}(X^{\tau})$. 
\end{proof}

\section{Computing the order of singular K3 surfaces} \label{sectionsingular}
In this section we will explicitly compute the order $\mathcal{O}(X)$ for every $X/ \C$ with maximal Picard rank $\rho(X) = 20$, so to have an easy criterion to decide whether $\mathcal{O}(X)$ is principal or not. If $X / \C$ is a singular $K3$ surface, the order $\mathcal{O}(X)  \coloneqq \End_{\Hdg}(T(X))$ can be determined in the following standard way.  Choose a $\Z$-basis $e_{1},e_{2}$ of $T(X)$ and write the intersection matrix as 
\begin{equation} \label{quadratic form}
M=
  \left[ {\begin{array}{cc}
   2a & b \\
   b & 2c \\
  \end{array} } \right]
\end{equation}

with $a,b,c \in \Z$ and $\Delta   \coloneqq b^{2} - 4ac < 0$. Let $2q(x,y)   \coloneqq (x e_{1} + y e_{2}, x e_{1} + y e_{2})_{X}$ be the binary quadratic form associated to $(-,-)_{X}$, i.e. $$q(x,y) = ax^{2} + bxy + cy^{2}.$$
Up to orientation, the only Hodge structure on $T(X)$ of K3 type is given by $$T(X)^{1,-1} = \C 
\begin{bmatrix} s \\ 1 \end{bmatrix} $$
where $s$ is a solution of $q(x,1)=0$, say $s = \frac{-b + \sqrt{\Delta}}{2a}$. This follows by the fact that a non-zero $2$-form $\omega$ must satisfy $q(\omega, \omega) = 0$. Denote by $E$ the field $\Q(\sqrt{\Delta})$ and write $\Delta = f^2 \Delta_E$, with $\Delta_E$ the discriminant of the field $E$. 

\begin{prop}
The ring homomorphism  
\begin{align*}
\Phi \colon E &\rightarrow M_{2 \times 2}(\Q) \\ 
x+y\sqrt{\Delta_E} &\mapsto  x \Id + \frac{y}{f}  \left[ {\begin{array}{cc}
    - b & -2c \\
   2a &  b \\
  \end{array} } \right]
\end{align*}
realizes $E$ as $\End_{\Hdg}(T(X)_{\Q})$
\end{prop} 
 
\begin{proof}
The fact that the above map is a morphism of rings is an easy computation. The only thing left to check is that $\Phi(E) \subset \End_{\Hdg}(T(X)_{\Q})$, and this is equivalent to $\Phi(\sqrt{\Delta_E}) \in \End_{\Hdg}(T(X)_{\Q})$. Now, $$\Phi(\sqrt{\Delta_E}) = \frac{1}{f} \left[ {\begin{array}{cc}
   - b & -2c \\
   2a & b \\
  \end{array} } \right]
$$
and we have 
$$\frac{1}{f}\left[ {\begin{array}{cc}
   - b & -2c \\
   2a & b \\
  \end{array} } \right]  \begin{bmatrix} s \\ 1 \end{bmatrix} =  \frac{1}{f}\left[ {\begin{array}{cc}
   \frac{\Delta - b\sqrt{\Delta}}{2a} \\
   \sqrt{\Delta} \\
  \end{array} } \right] = \sqrt{\Delta_E} \begin{bmatrix} s \\ 1 \end{bmatrix} $$
\end{proof}

\begin{thm} \label{O(X)}
Let $X/ \C$ be a singular K3 surface, let $q(x,y)   \coloneqq ax^2 + bxy + cz^2$ the quadratic form associated to a $\Z$-basis of $T(X)$, of discriminant $\Delta = f^2 \Delta_E$, with $\Delta_E$ the discriminant of the field $E=\Q(\sqrt{\Delta})$. Then 

$$ \mathcal{O}(X) \cong \Z + \frac{f}{(a,b,c)}\mathcal{O}_{E}.$$
In particular, $X$ is principal if and only if $f = (a,b,c)$.
\end{thm}
\begin{proof} 
From the discussion above, we know that the order $\mathcal{O}(X)$ corresponds to $$\mathcal{O}(X) \cong \bigg\{ x,y \in \Q \colon \left[ {\begin{array}{cc}
   x - \frac{b}{f}y & -\frac{2c}{f}y \\
   \frac{2a}{f}y & x + \frac{b}{f}y \\
  \end{array} } \right] \in M_{2 \times 2}(\Z) \bigg\}. $$
  
This is equivalent to $2x \in \Z$, $\frac{2(a,b,c)}{f}y \in  \Z$ and $ x - \frac{b}{f}y \in \Z$, i.e. 

$$\mathcal{O}(X) \cong \bigg\{ \frac{x}{2} + \frac{fy}{2(a,b,c)}\sqrt{\Delta_E} \colon x,y \in \Z \, , \, x + \frac{b}{(a,b,c)}y  \equiv 0 \, \text{mod} \, 2 \bigg\}.$$
We also have 
 $$ \bigg( \frac{b}{(a,b,c)} \bigg)^2 \equiv \bigg( \frac{f}{(a,b,c)} \bigg)^2 \Delta_E \mod 4$$
If $\Delta_E \equiv 0$ mod $4$ then the above equations forces $$\bigg( \frac{f}{(a,b,c)} \bigg)^2 \equiv \bigg( \frac{b}{(a,b,c)} \bigg)^2  \equiv 0 \mod 4$$ and $\mathcal{O}(X)$ corresponds to 
$$\mathcal{O}(X) \cong \bigg\{ \frac{x}{2} + \frac{fy}{2(a,b,c)}\sqrt{\Delta_E} \colon x,y \in \Z \, , \, x \equiv 0 \mod 2 \bigg\} = \Z + \frac{f}{(a,b,c)} \mathcal{O}_{E} $$
If $\Delta_E \equiv 1 \mod 4$ and $\frac{f}{(a,b,c)} $ is odd, the order $\mathcal{O}(X)$ corresponds to  
$$\mathcal{O}(X) \cong \bigg\{ \frac{x}{2} + \frac{fy}{2(a,b,c)}\sqrt{\Delta_E} \colon x,y \in \Z \, , \, x + y \equiv 0 \mod 2 \bigg\} = \Z +  \frac{f}{(a,b,c)} \mathcal{O}_{\Q(\sqrt{\Delta'})}$$
And finally, if $\Delta_E \equiv 1 \mod 4$ and $\frac{f}{(a,b,c)} $ is even, $\mathcal{O}(X)$ corresponds to 
$$\mathcal{O}(X) \cong \bigg\{ x + \frac{fy}{2(a,b,c)}\sqrt{\Delta_E} \colon x,y \in \Z  \bigg\}= \Z + \frac{f}{2(a,b,c)} \bigg( \Z + 2\mathcal{O}_{E} \bigg)= \Z + \frac{f}{(a,b,c)} \mathcal{O}_{E}.$$
\end{proof}

\begin{cor} \label{Infinite pic 20}
Let $E$ be an imaginary quadratic extension of $\Q$. Then there are infinitely many $\C$-isomorphism classes of K3 surfaces with CM by $\Oo_E$. 
\end{cor}
\begin{proof}
As proved in \cite{MR0284440}, K3 surfaces with maximal Picard rank correspond bijectively to isomorphism classes of positive-definite oriented even lattices of rank two, via $X \mapsto T(X)$ (this is indeed very similar to what happens in Proposition \ref{key proposition}). Let $E$ be any imaginary quadratic field and choose a lattice $M$ like $\eqref{quadratic form}$, with $E = \Q(\sqrt{b^2-4ac})$ and $f = (a,b,c)$. Write $X_M$ for the only K3 surface with $T(X_M) \cong M$. By Theorem \ref{O(X)}, $X_M$ has CM by $\Oo_E$. But for every $n \in \Z_{>0}$ also $X_{nM}$ has CM by $\Oo_E$ and $X_{M}$ is not isomorphic to $X_{nM}$ if $n>1$.  
\end{proof}
In Proposition \ref{key proposition} we will extend this result to all $E$ with $[E \colon \Q] \leq 10$. 
\section{Brauer groups}  \label{SectionBrauer}
In this section we recall some theory about the Brauer group and its connection to the transcendental lattice of (K3) surfaces. We refer the interested reader to Section 4.3. of \cite{Alexeibook} for a thorough explanation. Let $K$ be a field of characteristic zero, $\overbar{K}$ a fixed algebraic closure and $G_K$ its absolute Galois group. The Brauer group of a smooth, geometrically integral variety $X / K$ is $\Br(X)   \coloneqq \mathrm{H}^2_{\et}(X, \mathbb{G}_m) $. It is always a torsion abelian group under our assumptions on $X$, and the association $X \mapsto \Br(X)$ is functorial and contravariant in $X$. When $X = \mathrm{Spec}(K)$ then $\Br(K) = \mathrm{H}^2(G_K, \overbar{K}^\times)$ is the classical Brauer group of $K$ and parametrizes central simple algebras (or Severi-Brauer varieties) over $K$, modulo an appropriate equivalence relation. If moreover $K$ is a number field, there is the following explicit description of $\Br(K)$ given by class field theory. One first computes the Brauer groups of the local completions: let $v$ be any place of $K$ and denote by $K_v$ the completion of $K$ at $v$. Then 
\begin{itemize}
\item  If $v$ is finite, there is a canonical isomorphism $\mathrm{Inv}_v \colon \Br(K_v) \cong \Q/ \Z$ given by the \textit{invariant map};
\item $\Br(K_v) = \Br(\R) = \Z/ 2 \Z$ if $v$ is real ;
\item $\Br(K_v) = \Br(\C) =0$ if $v$ is complex.
\end{itemize} 
From $K \subset K_v$ we obtain $\Br(K) \rightarrow \Br(K_v)$, and all these maps fit into the fundamental exact sequence
\begin{equation} \label{ctfBrauer}
 0 \rightarrow \Br(K) \rightarrow \bigoplus_{v} \Br(K_v) \xrightarrow{\sum \textrm{Inv}_v} \Q / \Z \rightarrow 0
\end{equation}
(where we put $\textrm{Inv}_v \colon \Br(K_v) \cong \Z/ 2 \Z \hookrightarrow \Q/ \Z$ if $v$ is real). Returning to positive dimensional subjects, assume that $X/ K$ is also proper. Then its ad\`{e}lic points can be written as $$X(\A_K) = \prod_v X(K_v)$$ because of the valuative criterion for properness, and $X(\A_K)$ is non-empty precisely when $X$ has points locally everywhere. Manin observed in \cite{MR0427322} that \eqref{ctfBrauer} gave some necessary condition for $x \in X(\A_K)$ to belong to $X(K)$, and his considerations explained why the Hasse principle failed in all the examples known at those times. He observed that one can pair any $\alpha \in \Br(X)$ with any $x = \{ x_v \} \in X(\A_K)$ by putting $$(\alpha, x)   \coloneqq \sum_v \mathrm{Inv}_v( \alpha_{|x_v}),$$ where $ \alpha_{|x_v}$ is the pullback of $\alpha$ to to $\Br(x_v) = \Br(K_v)$ via $x_v \hookrightarrow X$. 
This is always well defined, and it gives the pairing mentioned in the introduction 
$$X(\A_K) \times \Br(X) \rightarrow \Q / \Z. $$
It is then a direct consequence of \eqref{ctfBrauer} that 
$$ X(K) \subset X(\A_K)^{\Br(X)}   \coloneqq \{ x \in X(\A_K) \colon (\alpha, x) = 0 \,\, \forall \,\, \alpha \in \Br(X) \}, $$
and one says that there is a Brauer-Manin obstruction to the Hasse principle if $X(\A_K) \neq \emptyset$ but $X(\A_K)^{\Br(X)}  = \emptyset$. Theorem \ref{Tschinkel} in the introduction says that in order to compute $X(\A_K)^{\Br(X)} $ efficiently for a K3 surface $X/K$ one needs to bound the cardinality of $\Br(X)/ \Br_0(X)$. From the Leray spectral sequence 
$$E^{p,q}_2   \coloneqq \mathrm{H}^p \big( G_K, \mathrm{H}_{\et}^q(\overline{X}, \mathbb{G}_m ) \big) \Rightarrow \mathrm{H}_{\et}^{p+q}(X, \mathbb{G}_m),$$
one has the exact sequence
$$\Br(K) \rightarrow \ker \big(\Br(X) \rightarrow \Br(\overline{X})^{G_K} \big) \rightarrow \mathrm{H}^1 \big(G_K, \Pic(\overline{X}) \big) \rightarrow \mathrm{H}^3 ( G_K, \overbar{K}^\times ), $$
and the group $\mathrm{H}^3 ( G_K, \overbar{K}^\times )$ vanishes because $K$ is a number field. The filtration mentioned in the introduction 
\begin{equation}
 \Br_0(X) \subset \Br_1(X) \subset \Br(X) 
\end{equation}
is given by the constant classes $\Br_0(X)   \coloneqq \mathrm{Im} \big(\Br(K) \rightarrow \Br(X) \big)$ and the \textit{algebraic classes} $\Br_1(X)    \coloneqq \ker \big(\Br(X) \rightarrow \Br(\overline{X})^{G_K} \big)$. The quotient $\Br_1(X)/ \Br_0(X) \cong \mathrm{H}^1 (G_K, \Pic(\overline{X}) )$ is a finite group since  $\Pic(\overline{X}) \cong \NS(\overline{X})$ for K3 surfaces, and one can usually compute it after finding explicit generators of $\Pic(\overline{X})$. Finally, the \textit{transcendental Brauer group} is the quotient $\Br(X)/ \Br_1(X) \subset \Br(\overline{X})^{G_K}$ and its elements are of geometric nature, in the sense that they are represented by Azumaya algebras over $\overline{X}$. By \cite{MR2395136} the group $\Br(\overline{X})^{G_K}$ is always finite, and it turns out that the most fruitful strategy to bound the order $\Br(X)/ \Br_1(X)$ is to study $\Br(\overline{X})$ as a Galois representation. To do so, consider the Kummer sequence $$1 \rightarrow \mu_n \rightarrow \mathbb{G}_m \xrightarrow{n} \mathbb{G}_m \rightarrow 1 $$
after inspecting the long exact sequence associated in \et ale cohomology one obtains
$$ 0 \rightarrow \Pic(\overline{X}) \otimes \Z / n \Z \rightarrow \mathrm{H}^2_{\et}(\overline{X}, \mu_n) \rightarrow \Br(\overline{X}) [n] \rightarrow 0,  $$ 
which becomes 
\begin{equation} \label{eqnFINALE}
    0 \rightarrow \NS(\overline{X}) \otimes \Z / n \Z \rightarrow \mathrm{H}^2_{\et}(\overline{X}, \mu_n) \rightarrow \Br(\overline{X}) [n] \rightarrow 0,
\end{equation} 
since (in general, for any surface $X$) $\Pic(\overline{X})$ is an extension of $\NS(\overline{X})$ by the divisible group $\Pic^0(\overline{X})$. We have already introduced the following notation during the proof of Proposition \ref{invariance O}. We make it now official.
\begin{defi}
Let $X/K$ be a smooth projective surface defined over an algebraic closed field of characteristic $0$, and assume for simplicity that $  \mathrm{H}^{\bullet}_{\et}(\overline{X},\widehat{\Z})$ has no torsion. We denote by $\widehat{\NS}(\overline{X}) = \NS(\overline{X}) \otimes \widehat{\Z}$ the profinite completion of $\NS(\overline{X})$ and by $\widehat{T}(X)  \coloneqq \widehat{\NS}(\overline{X})^{\perp}$ the orthogonal complement of $\widehat{\NS}(\overline{X}) \subset \mathrm{H}^{2}_{\et}(\overline{X},\widehat{\Z})(1)$ (this is a non-standard notation). Note that any embedding $K \hookrightarrow \C$ induces an isomorphism $\widehat{T}(X) \cong T(X_\C) \otimes \widehat{\Z}.$
\end{defi}
After some manipulations of the sequence \ref{eqnFINALE}, and still assuming that $\mathrm{H}^{\bullet}_{\et}(\overline{X},\widehat{\Z})$ has no torsion, one finds that $$\Br(\overline{X}) \cong \big( \mathrm{H}^{2}_{\et}(\overline{X},\widehat{\Z})(1)/ \widehat{\NS}(\overline{X}) \big) \otimes \Q / \Z.$$
This isomorphism respects the Galois actions, thus describing the Galois module $\Br(\overline{X})$ in terms of the \et ale cohomology of $\overline{X}$. To make this even more explicit, note that since $H^2(X, \Z)$ is a unimodular lattice one has  
\begin{align*}
\mathrm{H}^{2}_{B}(X_{\C},\Z)(1)/\NS(X_{\C}) &\xrightarrow{\sim} \Hom(T(X_\C), \Z) \\
v + \NS(X_{\C}) &\mapsto (x \to (x,v)),
\end{align*}
so that $$\Br(\overline{X}) \cong \Hom(T(X_{\C}), \Q / \Z) \cong  \Hom(\widehat{T}(\overline{X}), \Q / \Z). $$
Note that from the equation above one gets a natural action of $\Oo(X)$ on $\Br(\overline{X})$.
A Hodge isometry $ f \colon T(X_\C) \xrightarrow{\simeq} T(Y_\C)$ naturally induces two maps on Brauer groups: $f^{*} \colon \Br(Y_\C) \rightarrow \Br(X_\C)$ given by applying the contravariant functor $\Hom(-. \Q/\Z)$ and $f_{*} \colon \Br(X_\C) \rightarrow \Br(Y_\C)$ given by identifying $$\Hom(T(X_\C),\Z) \cong \{v \in T(X)_\Q \colon (v,x)_X \in \Z \,\, \text{for all} \,\, x \in T(X)\}.$$ They are one the inverse of the other. Assume now that $X$ has CM by $\Oo_E$, we make the following elementary but useful definition. 
\begin{defi} \label{Level Structure T}
By a level structure on $T(X)$ we mean a finite subgroup $B \subset \Br(X)$ that is invariant under the action of $\Oo(X)$.
\end{defi}
 It is clear that level structures on $T(X)$ corresponds bijectively to free $\Z$-modules $\Lambda$  $$\Hom(T(X), \Z) \subset \Lambda \subset \Hom(T(X), \Q) $$ that are invariant under the action of $\Oo(X)$, or equivalently to ideals $I \subset \Oo_E$ by putting $$ \Br(X)[I]=\{ x \in \Br(X) \colon i \cdot x =0 \,\, \forall\,\, i \in I\}.$$
\begin{lemma}
Let $X/K \subset \C$ be a K3 surface defined over a number field $K$, and suppose that $X$ has CM over $K$. Then $\Br(\overline{X})^{G_{K}} \subset \Br(\overline{X})$ is a level structure on $T(X_\C)$. 
\end{lemma}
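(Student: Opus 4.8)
The statement breaks into two assertions: that $\Br(\overline{X})^{\Gamma_K}$ is \emph{finite}, and that it is stable under the natural action of $\Oo(X)$ on $\Br(\overline{X})$. For finiteness I would simply invoke the theorem of Skorobogatov and Zarhin \cite{Skorobogatov:2008} quoted in the introduction, which guarantees that $\Br(\overline{X})^{\Gamma_K}$ is finite for \emph{any} K3 surface over a number field; nothing specific to the CM situation is needed at this point.

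The substantive point is the $\Oo(X)$-invariance, and the key observation is that it is exactly the hypothesis ``$X$ has CM over $K$'' that makes it work. Recall from Section \ref{section 4} the identification
\[
\Br(\overline{X})\;\cong\;\Hom\big(T(\overline{X})_{\widehat{\Z}},\widehat{\Z}\big)\otimes\Q/\Z\;\cong\;\Hom\big(T(X_{\C}),\Q/\Z\big),
\]
under which $\Gamma_K$ acts through its action on $T(\overline{X})_{\widehat{\Z}}$ and an element $\alpha\in\Oo(X)\subset E(X)$ acts by precomposition, the action being well defined precisely because $\alpha$ preserves the lattice $T(X_{\C})$, equivalently $T(\overline{X})_{\widehat{\Z}}\cong T(X_{\C})\otimes\widehat{\Z}$. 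I would then show that these two actions \emph{commute}. Indeed, by Definition \ref{CM over K} the hypothesis that $X$ has CM over $K$ says $E(X)=E(X_{\C})$, so every $\alpha\in\Oo(X)\subset E(X)$ is an absolute Hodge class defined over $K$ in the sense of Definition \ref{defined/K}. Viewing $\alpha$ as a self-correspondence of $X$ and applying the remark following Definition \ref{defined/K}, the induced endomorphism $\alpha\otimes 1$ of $H^{2}_{\et}(\overline{X},\A_f(1))$ is $\Gamma_K$-equivariant; restricting it to the Galois-stable sublattice $T(\overline{X})_{\widehat{\Z}}$ and passing to $\Hom(-,\widehat{\Z})\otimes\Q/\Z$ shows that $\alpha$ acts $\Gamma_K$-equivariantly on $\Br(\overline{X})$. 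Consequently $\alpha$ maps $\Br(\overline{X})^{\Gamma_K}$ into itself, and letting $\alpha$ range over $\Oo(X)$ yields the desired invariance.

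Combining the two parts, $\Br(\overline{X})^{\Gamma_K}$ is a finite subgroup of $\Br(X_{\C})=\Br(\overline{X})$ stable under $\Oo(X)=\Oo(X_{\C})$, hence a level structure on $T(X_{\C})$ in the sense of Definition \ref{Level Structure I}. I do not expect any serious obstacle once the earlier machinery is in place: the invariance is a formal unwinding of the definitions, and the only external input is the Skorobogatov--Zarhin finiteness theorem. The one step that requires a little care is checking that the $\Oo(X)$-action entering the definition of a level structure (on $\Hom(T(X_{\C}),\Q/\Z)$) is literally dual to the \'etale action produced by the absolute Hodge classes, so that ``defined over $K$'' translates into ``$\Gamma_K$-equivariant''; this is where the comparison isomorphisms of Section \ref{section 4} and the compatibility of realizations of absolute Hodge cycles enter.
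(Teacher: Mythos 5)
Your proposal is correct and follows exactly the paper's argument: finiteness is quoted from Skorobogatov--Zarhin, and the $\Oo(X)$-invariance follows because the hypothesis of CM over $K$ means every cycle in $E(X)$ is defined over $K$, hence acts $\Gamma_K$-equivariantly on $\Br(\overline{X})$. The paper's own proof is just these two sentences stated tersely; your version merely spells out the unwinding through the comparison isomorphisms.
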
 
 \begin{proof}
By the results in \cite{MR2395136}, we know that $\Br(\overline{X})^{G_{K}}$ is finite.  If $X/K \subset \C$ has CM over $K$, then $Br(X_{\overbar{K}})^{G_{K}} $ is also invariant under the $\Oo(X)$-action, since every cycle in $E(X)$ is defined over $K$. 
 \end{proof}
 
 \begin{rmk}
 We decided to define a level structure in this way because, ultimately, we will apply our results to study the Brauer group of $X$. The alternative is to define a level structure as 
 $$T(X)[I] \coloneqq \{ v \in T(X) \otimes \Q / \Z \colon i \cdot v = 0 \,\, \forall \,\, i \in I \},$$
and the results of this paper do not depend on this choice. 
  \end{rmk}

 \section{The main theorem of complex multiplication} \label{section main thm}
In his paper \cite{2005math......8018R}, Rizov proves an analogue of the main theorem of complex multiplication $K3$ surfaces. As a matter of fact, the theorem is a formal consequence of the fact (also proved by Rizov) that the moduli stack of polarized$K3$ surfaces over $\Q$ is related to the canonical model of the $K3$ Shimura variety via an \'{e}tale morphism defined over $ \Q$ (the period morphism). As pointed out by Madapusi Pera in \cite{MR3370622}, Rizov's theorem could also be proved using the theory of motives for absolute Hodge cycles, see \textit{loc. cit.} Corollary 4.4. In this section we follow the notations of Rizov's and of Milne's article `Introduction on Shimura varieties' (appearing in \cite{Arthur2005HarmonicAT}).

\subsection{A summary of class field theory}
Before stating the main theorem of complex multiplication, we recall the main statements from class field theory. Let $K$ be a number field. Class field theory describes $\Gal(K^{ab}/K)$ via the reciprocity map, which is a surjective, continuous morphism $$\rec_{K} \colon \A^{\times}_{K} \rightarrow \Gal(K^{ab}/K)$$
whose kernel contains $K^{\times}$. It induces an isomorphism $\widehat{K^\times \backslash \A^\times_{K}} \xrightarrow{\sim} \Gal(K^{ab}/K)$, where $\widehat{K^\times \backslash \A^\times_{K}}$ denotes the profinite completion of $K^\times \backslash \A^\times_{K}$. For our purposes, it is also useful to introduce the Artin map:
\begin{equation}
\art_K \colon \A_K^\times \xrightarrow{\rec_{K} } \Gal(K^{ab}/K) \xrightarrow{\sigma \mapsto \sigma^{-1}}   \Gal(K^{ab}/K).
\end{equation}

 The reciprocity map enjoys the following properties
\begin{enumerate}
\item If $L/K$ is an abelian extension, there is a commutative diagram 
\begin{center}

\begin{tikzcd} 
K^\times \backslash \A^\times_{K} \arrow[r, "\rec_K"] \arrow[d]
& \Gal(K^{ab}/K) \arrow[d, "\sigma \mapsto \sigma_{| L}"] \\
K^\times \backslash \A^\times_{K}/ \text{Nm}_{L/K}(\A^\times_{K} ) \arrow[r, "\sim"]
& \Gal(L/K).
\end{tikzcd}
\end{center}
This establishes a one-to-one correspondence between finite-index subgroups of $\A^\times_{K}$ that contain $K^\times$ and finite abelian extensions of $K$. 
\item If $v$ is a prime of $K$ that is unramified in $L$ and $\pi \in K_v$ is a prime element, then the id\`{e}le $( \cdots 1 \cdots \pi \cdots 1 \cdots )$ with $\pi$ at the $v$-component and $1$ elsewhere is sent by $\rec_K$ to the Frobenius element  $(v, L/K) \in \Gal(L/K)$.
\item If $K$ is totally imaginary, then the reciprocity map factors through the quotient $\A^{\times}_K \twoheadrightarrow \A^{\times}_{K,f}.$
\end{enumerate} 

Let now $V$ be a finite dimensional $\Q$-vector space and let $h \colon \DT \rightarrow \GL(V)_{\R}$ be a rational Hodge structure. Suppose that $h$ is special, i.e., that it satisfies the condition in Definition \ref{Special HS}. In particular, there exists a torus $T \subset \GL(V)$ defined over $\Q$ such that the morphism $h$ factors through $T_{\R}$: $$h \colon \DT \rightarrow T_{\R} \hookrightarrow \GL(V)_{\R}.$$
Recall that the reflex field $E(h)$ introduced in \eqref{Definition reflex field} is the field of definition of the composition $$\mathbb{G}_{m,\C} \xrightarrow{\mu} \mathbb{S}_\C \xrightarrow{h} T_\C,$$ so that we can consider the map $$h \circ \mu \colon  \mathbb{G}_{m,E(h)} \rightarrow T_{E(h)}.$$
By the functoriality of the Weil restriction of scalars, we also have a map $$\text{Res}_{E(h)/\Q}(h \circ \mu) \colon \text{Res}_{E(h)/\Q}(\mathbb{G}_{m,E(h)}) \rightarrow \text{Res}_{E(h)/\Q}(T_{E(h)}),$$
and we define the map $r'_h$ as the composition $$\text{Res}_{E(h)/\Q}(\mathbb{G}_{m,E(h)}) \rightarrow \text{Res}_{E(h)/\Q}(T_{E(h)}) \xrightarrow{\text{N}} T,$$
where $N$ is the \textit{Norm map}, acting on $\overline{\Q}$-points as \begin{align*}
  \text{Res}_{E(h)/\Q}(T_{E(h)})(\overline{\Q}) \cong \bigoplus_{\sigma \colon E(h) \hookrightarrow \overline{\Q}} T(\overline{\Q})_\sigma &\rightarrow T(\overline{\Q}) \\
  (t_\sigma)_\sigma &\mapsto \prod_{\sigma}t_\sigma.
\end{align*} 
Finally, we define $r_{h} \colon \A^{\times}_{E(h)} \rightarrow T(\A_{f})$ as the composition $$\A^{\times}_{E(h)} =  \Res_{E(h)/ \Q}(\mathbb{G}_{m,E(h)})(\A) \xrightarrow{r^{'}_{h}} T(\A) \xrightarrow{\text{proj}} T(\A_{f}).$$
In our case, where $T=U_E=\MT(X)$, we have 
\begin{prop}
After naturally identifying $\MT(X)$ with the norm-1 torus $\U_E \subset \Res_{E/\Q} \mathbb{G}_m$ the map $r$ corresponds to \begin{align*}
 r \colon \A^{\times}_{E} &\mapsto \A^{\times}_{E,f} \\
 s &\to \frac{s_f}{\bar{s}_f}
\end{align*} 
\end{prop}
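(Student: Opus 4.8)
The plan is to compute the reciprocity map $r_h$ attached to the K3 Hodge structure by unwinding the definition given just above: one composes the cocharacter $\mu$ with the norm from $E$ to get $r'_h \colon \Res_{E/\Q}\mathbb{G}_m \to \MT(X)$, then projects to the finite adèles. So the first step is to make the identification $\MT(X) = \U_E$ completely explicit on the level of tori. From the displayed formula for $\mu$ in \eqref{cocharacter}, over $\C$ the cocharacter sends $z \mapsto (z,1,\dots,1,z^{-1})$ in the decomposition $\bigoplus_\tau \C^\times_\tau$, where the first coordinate is the $\sigma_X$-eigenspace and the last is the $\overline{\sigma_X}$-eigenspace. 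This is manifestly the cocharacter of $\U_E$ coming from the embedding $\sigma_X$, so $\mu$ is defined over $E = \sigma_X(E(X))$ (reproving Proposition \ref{reflex} en route), and it is the standard minuscule cocharacter $z \mapsto z/\overline z$ written multiplicatively inside $\U_E$.

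**Next I would** compute the norm map $r'_h \colon \Res_{E/\Q}\mathbb{G}_m \to \U_E$. Since $E(h) = E$, the reflex norm is the composite $\Res_{E/\Q}\mathbb{G}_{m,E} \xrightarrow{\Res(\mu)} \Res_{E/\Q}(\U_{E,E}) \xrightarrow{\mathrm{Nm}_{E/\Q}} \U_E$. The key point is that over $E$, the torus $\U_{E,E}$ splits as $\bigoplus$ of the characters $z_\tau z_{\overline\tau} = 1$, and $\mu$ lands in the $(\sigma_X,\overline{\sigma_X})$-factor as $z \mapsto (z, z^{-1})$. Taking the norm down to $\Q$ — i.e. multiplying over the $\Gal(E/\Q)$-orbit, which for a quadratic-type contribution just pairs $\sigma_X$ with $\overline{\sigma_X}$ — one gets on points the map $s \mapsto s/\overline s$ (where $\overline{\ }$ is complex conjugation on $E$), since $s$ contributes through the $\sigma_X$-slot and $\overline s$ through the $\overline{\sigma_X}$-slot with the inverse. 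Concretely: for $s \in \A_E^\times = \Res_{E/\Q}\mathbb{G}_{m,E}(\A)$, $r'_h(s) \in \U_E(\A) \subset \A_E^\times$ equals $s/\overline s$. This is the standard computation of the reflex norm for a CM type of rank one, done here directly rather than through the formalism.

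**Then** I would project to the finite adèles: $r_h(s) = $ the finite part of $s/\overline s$, i.e. $s_f/\overline{s_f}$, which is exactly the claimed formula $r\colon \A_E^\times \to \A_{E,f}^\times$, $s \mapsto s_f/\overline{s_f}$. One should remark that this is well-defined and that the infinite place contributes trivially after projection — automatic from $\U_E(\R)$ being compact and the projection killing it — and that $E^\times$ maps into the kernel, consistent with $r_h$ factoring through $\A_E^\times/E^\times$; since $E$ is CM, the archimedean components drop out as noted in the recollection on $\rec_K$ for CM fields just before the proposition.

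**The main obstacle** is purely bookkeeping: keeping straight which of the two distinguished embeddings $\sigma_X, \overline{\sigma_X}$ contributes $z$ versus $z^{-1}$ under $\mu$, and verifying that the reflex norm construction (norm from $\Res_{E/\Q}$ down to $\U_E$, as opposed to some other target torus) genuinely produces $s/\overline s$ and not $\overline s/s$ or $s\overline s$ — a sign/orientation issue that is entirely determined by the Hodge-theoretic normalization fixed in Section \ref{section 2} (namely that $\sigma_X^{-1}(\alpha)$ acts on $\omega \in T^{1,-1}$ by multiplication by $\alpha$). I expect no genuine difficulty beyond tracing this normalization carefully through the definition of $r'_h$; there is no deep input, only the explicit structure of the rank-one CM torus and the fact, already recorded, that $\MT(X) = \U_E$.
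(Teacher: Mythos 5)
Your proposal is correct and follows essentially the same route as the paper: both unwind the definition of the reflex norm $r'_h = \mathrm{Nm}\circ\Res(\mu)$ by decomposing $E$ tensored with a splitting field into embedding eigenspaces, reading off $\mu$ as $z\mapsto(z,1,\dots,1,z^{-1})$ in the $\sigma_X$- and $\overline{\sigma_X}$-slots, and multiplying over the Galois orbit to obtain $s\mapsto s/\overline{s}$ before projecting to the finite ad\`eles. The only difference is presentational: the paper carries out the bookkeeping over the Galois closure $\tilde{E}$ with explicit idempotents $1_\iota$ (the safer setting when $E/\Q$ is not Galois, where your phrase ``multiplying over the $\Gal(E/\Q)$-orbit'' would need to be replaced by a sum over cosets $G/S$ in $\Gal(\tilde{E}/\Q)$), whereas you work directly over $E$; the mathematics is the same.
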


\begin{proof}
Remember that the reflex field $E$ is naturally embedded into $\C$, via the evaluation map. Denote by $\tilde{E} \subset \C$ its Galois closure, and consider the natural embedding
\begin{align*}
E &\hookrightarrow E \otimes_{\Q} \tilde{E} \\
e &\to e \otimes 1 .
\end{align*}
We can multiply every element $x \in E \otimes_{\Q} \tilde{E}$ by an element of $e \in E$ in two ways, respectively $e \cdot x$ and $x \cdot e$.
Denote by $\mathcal{E}   \coloneqq \{  \iota \colon E \hookrightarrow \tilde{E} \}$ the set of embeddings. The Galois group  $G   \coloneqq \Gal(\tilde{E} / \Q) $ acts transitively on $\mathcal{E}  $ by $\iota \mapsto g \iota$. There is a decomposition $$ E \otimes_{\Q} \tilde{E} = \bigoplus_{\iota \in \mathcal{E}} \tilde{E}_{\iota} $$
where 
$$ \tilde{E}_{\iota} = \{ x \in E \otimes_{\Q} \tilde{E} \colon e \cdot x = x \cdot \iota(e) \,\, \forall \,\, e\in E \}.$$
One can show that exists a unique element $1_{\iota} \in \tilde{E}_{\iota}$ such that the map $\tilde{E} \rightarrow \tilde{E}_{\iota}$, $\tilde{e} \mapsto 1_\iota \cdot \tilde{e}$ is an isomorphism of fields (multiplication on $\tilde{E}_{\iota}$ being the one induced by $E \otimes_{\Q} \tilde{E}$).
If we let $G$ act on the right side, i.e. $g( z \otimes w)  \coloneqq z \otimes g(w)$ for every $g \in G$, we have $g( 1_{\iota} \cdot \tilde{e}) = 1_{g \iota} \cdot g(\tilde{e}) $. In particular, the natural embedding $E \hookrightarrow E \otimes_{\Q} \tilde{E}$ becomes 
\begin{align} \label{alignimportante}
E &\hookrightarrow \bigoplus_{\iota \in \mathcal{E}} \tilde{E}_{\iota} \\
e &\mapsto \oplus_\iota 1_\iota \cdot \iota(e).
\end{align}
In our case, denote by $\sigma \colon E \hookrightarrow \tilde{E}$ the canonical inclusion. The cocharacter is given by 
\begin{align*}
\mu \colon E &\rightarrow E \otimes E \subset E \otimes \tilde{E} \\
e &\mapsto (1_{\sigma} \cdot \sigma(e), \cdots, \cdot 1_{ \overline{ \sigma}} \cdot \sigma(e)^{-1}), 
\end{align*}
where all the dotted entries are $1$. 
Denote by $S \subset G$ the stabilizer of $\iota$, the map $r'$ is finally  given by 
$$ \prod_{[g] \in G/ S} [g] \mu(e) =  \sum_{ \iota \in \mathcal{E}} 1_\iota \cdot  \iota \bigg( \frac{e}{\bar{e}} \bigg) = \frac{e}{\bar{e}},$$ 
(note that $[g] \sigma(e)$ is well defined) where in the last equality we use the identification \eqref{alignimportante}.
\end{proof}
We can now state the main theorem of CM for $K3$ surfaces: 
\begin{thm}[Rizov] \label{Rizov}
Let $X/\C$ be a $K3$ surface with complex multiplication and let $E \subset \C$ be its reflex field. Let $\tau \in \Aut(\C / E)$ and $s \in \A^{\times}_{E,f}$ be a finite id\`{e}le such that $\art(s) = \tau_{|E^{ab}}$. There exists a unique Hodge isometry $\eta \colon T(X)_{\Q} \rightarrow T(X^{\tau})_{\Q}$ such that the following triangle commutes 

\begin{center}
\begin{tikzcd}[row sep= large, column sep =large]
\widehat{T}(X)_{\Q} \arrow[r, "\eta \otimes \A_{f}"] & \widehat{T}(X^\tau)_{\Q} \\
\widehat{T}(X)_{\Q} \arrow[u, "\frac{s}{\bar{s}}"]\arrow[ur, "\tau^*"]
\end{tikzcd}
\end{center}
where $\tau^*$ is the pull-back in \'{e}tale cohomology of $\tau \colon X^{\tau} \to X$. 
\end{thm}

\begin{proof}
The diagram above, as found in \cite{2005math......8018R}, reads a bit differently: 

\begin{center}
\adjustbox{scale=1.1}{
\begin{tikzcd}
P_B(X,\A_{f})(1) \arrow[r, "\tilde{\eta} \otimes \A_{f}"] & P_B(X^{\tau},\A_{f})(1) \\
P_B(X, \A_{f})(1)\arrow[u, "r_{X}(s)"]\arrow[ur, "\tau^*"],
\end{tikzcd}
}
\end{center}
where $P_B(X,\A_{f})(1) $ is the primitive cohomology of $X$ with respect to some polarization $\ell \in \NS(X)$, $\tilde{\eta} \colon P_B(X,\Q)(1)\rightarrow P_B(X^{\tau},\Q(1)(1)$ is a Hodge isometry and $r_{X}$ is the reciprocity map associated to the torus $\MT(P_B(X,\Q)(1))$. Now, $P_B(X,\Q)(1) = T(X)_{\Q} \oplus A$, where $A$ is the rational $(0,0)$-part of $P_B(X,\Q)(1)$, i.e. $A= \{v \in \NS(X)_{\Q} \colon (v,\ell) = 0\}$. It is therefore clear that the inclusion $ T(X)_{\Q} \hookrightarrow P_B(X,\Q)(1)$ induces an isomorphism of Mumford-Tate groups 
\begin{center}

\begin{tikzcd}
\GL(T(X)_{\Q}) \arrow[r, hookrightarrow] &\GL( P_B(X ,\Q)(1)) \\
\MT(T(X)_{\Q})\arrow[u, hookrightarrow]\arrow[r, "\cong"] & \MT(P_B(X,\Q)(1)) \arrow[u, hookrightarrow].
\end{tikzcd}
\end{center}
This identification implies $r_{X}(s) = \frac{s}{\bar{s}}$ and $\tilde{\eta} = (\eta , \tau^{*})$, where $\tau^{*} \colon \NS(X) \rightarrow \NS(X^{\tau})$ is the pull-back via $\tau$. 
\end{proof}

\section{Ideal lattices and id\`eles} \label{Section6}
Ideal lattices provide a natural way to classify the transcendental lattices of K3 surfaces with CM by a fixed $\Oo_E$. All the results in here are classical, and we mainly follow Chapter 6 of \cite{LangCM}, Chapter 6 of \cite{shimura2014arithmetic}, and \cite{bayer-fluckiger_2002}.
\begin{defi}
Let $E$ be a CM number field. An ideal lattice $(I,q)$ is a fractional ideal $I \subset E$ and a non-degenerate, symmetric, $\Q$-bilinear form$$q \colon I \times I \rightarrow \Q$$ such that $q(\lambda x, y) = q(x, \overline{\lambda} y)$ for every $x,y \in I$ and $\lambda \in \mathcal{O}_{E}$. 
\end{defi}
By the non-degeneracy of the trace, it follows that there exists $\alpha \in E$ such that $\alpha = \overline{\alpha}$ and $q(x,y) = \tr_{E/ \Q}(\alpha x \overline{y})$. So that, from now on, we will denote with $(I, \alpha)$ the ideal lattice $(I,q)$ with $q(x,y) = \tr_{E/ \Q}(\alpha x \overline{y})$. 
\begin{defi}
An ideal lattice $(I, \alpha)$ is said to be \textit{integral} if $q$ takes value in $\Z$, and \textit{even} if $q(x,x) \in 2 \Z$ for every $x \in I$.
\end{defi}
 Recall that the inverse different ideal $\mathcal{D}_{E}^{-1}$ is defined to be the maximal fractional ideal of $E$ where $\tr_{E/ \Q}$ takes integral values. Hence, if $\alpha \in E$ is as above, $(I, \alpha)$ is integral if and only if 
\begin{equation} \label{integrality}
(\alpha) I \bar{I} \subset \mathcal{D}_{E}^{-1}.
\end{equation}
Let $(I, q)$ be an integral ideal lattice. Its dual is defined as $(I^{\vee},q)$ where 
\begin{equation} \label{dual lattice definition}
I^{\vee} = \lbrace x\in E \, : \, q(x,I) \subset \Z \rbrace.
\end{equation}
Note that the quadratic form induces a natural isomorphism $I^\vee \xrightarrow{\sim} \Hom(I,\Z)$ given by $x \mapsto q(x,-)$.
We also have a natural inclusion  $(I, q) \subset (I^{\vee},q)$. From the definition, it follows that also $(I^{\vee},q)$ is an ideal lattice (usually non integral) and that $$I^{\vee} = \big( \alpha\bar{I} \mathcal{D}_{E} \big)^ {-1};$$
the inclusion $I \subset \big( \alpha\bar{I} \mathcal{D}_{E} \big)^ {-1}$ is hence also a consequence of \eqref{integrality}. \\
\begin{defi}
We say that two ideal lattices $(I, \alpha)$ and $(J, \beta)$ are equivalent, $(I, \alpha) \cong (J, \beta)$, if there exists $e \in E^{\times}$ such that $J = eI$ and $\alpha = e \bar{e}\beta$. 
\end{defi}
This means exactly that multiplication by $e$ $$e \colon I \rightarrow J$$ is an isometry. Note that the two lattices $(I, \alpha)$ and $(J, \beta)$ can be isometric without being equivalent (because a general isometry between the two might not be $E$-linear). 
\begin{rmk}\leavevmode
Note that if $(I, \alpha) \cong (J, \beta)$ via $e \in E^{\times}$, then $(I^{\vee}, \alpha) \cong (J^{\vee}, \beta)$ via $e$ as well  Lemma \ref{duals}. 
\end{rmk}
If $(I, \alpha)$ is an ideal lattice, the quotient $E / I \cong I \otimes \Q / \Z$ is a torsion abelian group, and also an $\mathcal{O}_{E}-$module. We now make the analogue of Definition \ref{Level Structure T}.
\begin{defi} \label{Level structure II}
By a level structure on the ideal lattice $(I,\alpha)$ we mean a finite, $\Oo_E$-invariant subgroup $G \subset I^\vee \otimes \Q / \Z$. 
\end{defi}
\begin{rmk} \label{Important remark}
To give a level structure is equivalent to give a fractional ideal $J$ such that $I^\vee \subset J$, i.e. $J = \pi^{-1}(G)$ where $\pi \colon E \rightarrow E / I^\vee$ is the canonical projection. Equivalently, this is as giving the ideal $I_G = I^\vee J^{-1} \subset \Oo_E$, and from now on we will not make any distinction between one or the other definitions. 
\end{rmk}
We want now to extend the definition of equivalence keeping track of level structures. So let $(I, \alpha, G)$ and $(J, \beta, H)$ be two ideal lattices with level structures. We say that $(I, \alpha, G) \cong (J, \beta, H)$ if there exists $e \in E^{\times}$ as before such that the map induced by multiplication by $e$ $$ E / I^{\vee} \rightarrow E / J^{\vee} $$ restricts to an isomorphism between $G$ and $H$. In other words, there is an action of $E^{\times}$ on the set of ideal lattices with a level structure by putting for any $e \in E^\times$ $$e \cdot (I, \alpha, G)   \coloneqq \Big(eI, \frac{\alpha}{e\bar{e}}, eG \Big)$$ 
where $eG$ is the image of $G$ under the map $$e \colon E/ I^{\vee} \rightarrow E/ eI^{\vee},$$
where the last equation makes sense since $eI^{\vee} = (eI)^{\vee}$ thanks to the above remark. 
The following local facts can be found in Lang \cite{LangCM} Chapter 6. 
\begin{prop}
Let $I,J \subset E $ be fractional ideals. Then:
\begin{enumerate}
\item For all but finitely many finite places $v$ of $E$, $I \otimes \mathcal{O}_{E,v} = J \otimes \mathcal{O}_{E,v},$
\item  $I \subset J$ if and only if $I \otimes \mathcal{O}_{E,v} \subset J \otimes \mathcal{O}_{E,v}$ for every finite place $v$,
\item If $(I_{v})_{v}$ is a collection of $\mathcal{O}_{E,v}-$modules $I_{v} \subset E_{v}$, such that for all but finitely many $v$'s we have that $I_{v} = \mathcal{O}_{E,v}$, than there exists unique a fractional ideal $I$ such that $I \otimes \mathcal{O}_{E,v} = I_{v}$ for every $v$.

\end{enumerate}
\end{prop}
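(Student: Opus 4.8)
The plan is to derive all three assertions from the single structural fact that $\mathcal{O}_{E}$ is a Dedekind domain. Its localizations $\mathcal{O}_{E,v}$ at the nonzero primes $\mathfrak{p}_v$ are discrete valuation rings, and for any fractional ideal $I$ of $\mathcal{O}_{E}$ one has the identity $I = \bigcap_v I\mathcal{O}_{E,v}$, the intersection being taken inside $E$. This identity is the only non-formal input; it holds because $\mathcal{O}_{E}$ is a Noetherian, integrally closed domain of Krull dimension one (equivalently, it can be read off from the unique factorization of fractional ideals into prime powers). Below I abbreviate $I_v := I \otimes_{\mathcal{O}_{E}} \mathcal{O}_{E,v} = I\mathcal{O}_{E,v}$, and understand the $I_v$ in part (3) to be fractional $\mathcal{O}_{E,v}$-ideals, i.e. nonzero finitely generated $\mathcal{O}_{E,v}$-submodules of $E_v$.

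For (1) I would fix finite generating sets $I = \sum_{i=1}^r \mathcal{O}_{E} x_i$ and $J = \sum_{j=1}^s \mathcal{O}_{E} y_j$ with all $x_i, y_j \in E^{\times}$, write each of these finitely many elements as a ratio of two nonzero elements of $\mathcal{O}_{E}$, and use that a nonzero element of $\mathcal{O}_{E}$ lies in $\mathcal{O}_{E,v}^{\times}$ for all but finitely many $v$ (only the finitely many primes dividing the ideal it generates are exceptional). Outside the resulting finite set of places one gets $I_v = \mathcal{O}_{E,v} = J_v$, in particular equality.

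For (2) the forward implication is immediate from flatness of localization (or simply: $I \subset J$ forces $I\mathcal{O}_{E,v} \subset J\mathcal{O}_{E,v}$). Conversely, if $I_v \subset J_v$ for all $v$, then intersecting over all $v$ and invoking the identity $I = \bigcap_v I_v$ gives $I \subset \bigcap_v J_v = J$; alternatively, one observes that the finitely generated $\mathcal{O}_{E}$-module $(I+J)/J$ vanishes after localizing at every maximal ideal and is therefore zero.

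For (3), uniqueness is a formal consequence of (2) applied in both directions. For existence, let $S$ be the finite set of places with $I_v \neq \mathcal{O}_{E,v}$; since $\mathcal{O}_{E,v}$ is a DVR, each $I_v$ is principal, say $I_v = \mathfrak{p}_v^{n_v}\mathcal{O}_{E,v}$ for a unique $n_v \in \Z$, with $n_v = 0$ for $v \notin S$. Then $I := \prod_{v \in S}\mathfrak{p}_v^{n_v}$ is a fractional ideal of $\mathcal{O}_{E}$ satisfying $I\mathcal{O}_{E,w} = \mathfrak{p}_w^{n_w}\mathcal{O}_{E,w} = I_w$ for every $w$ (the case $w \notin S$ being trivial). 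There is no genuine obstacle: the proposition packages standard facts about Dedekind domains, and the only points calling for care are recording the identity $I = \bigcap_v I_v$ and, if one prefers to read $\mathcal{O}_{E,v}$ as the completion rather than the localization, replacing each $I_v$ by $I_v \cap E$ before intersecting in the proof of (3) — the argument being otherwise unchanged, and for a fully detailed treatment one may simply appeal to Lang or Shimura as indicated in the text.
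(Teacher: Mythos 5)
Your proof is correct. The paper itself gives no argument for this proposition --- it simply cites Lang and Shimura --- and what you have written is precisely the standard Dedekind-domain proof contained in those references: the identity $I=\bigcap_v I\mathcal{O}_{E,v}$ as the one non-formal input, the unit-at-almost-all-places observation for (1), localization for (2), and principality of ideals in a DVR for (3). Your two points of care --- reading the $I_v$ in (3) as fractional $\mathcal{O}_{E,v}$-ideals rather than arbitrary submodules of $E_v$, and intersecting $I_v\cap E$ if $\mathcal{O}_{E,v}$ denotes the completion (which is how the paper uses the notation in the surrounding id\`ele-theoretic discussion) --- are both appropriate and resolve the only ambiguities in the statement.
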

Let now $s \in \mathbb{A}^{\times}_{E,f}$ be a finite id\`ele and $I$ a fractional ideal. Then there exists a unique fractional ideal $J$ such that $$J_{v} = (s_{v}) \cdot I_{v}$$ since for all but finitely many $v$'s we have $(s_{v}) \cdot I_{v} = I_{v}$. The ideal $J$ corresponds to $(s)I$, where we denote
 $$(s) = \prod_{\p} \p^{\ord_{\p}(s)}$$
 (for simplicity, we shall also denote $(s) \cdot J$ by $sJ$).  To extend the action of $E^{\times}$ on triples $(I, \alpha, G)$ to a subgroup of $\A^{\times}_{E,f}$ containing $E^{\times}$, one starts from the isomorphism (pag. 77 in Lang's book \cite{LangCM})  $$E / I \cong \bigoplus E_{\p} / I_{\p} $$
where the sum is taken over all the prime ideals of $\mathcal{O}_{E}$, and the natural homorphism $$\A_{E,f} \rightarrow E/I$$ whose kernel is exactly $\oplus I_{\p}$. If $s \in \A^{\times}_{E,f}$ is an id\`ele, we saw before that $J   \coloneqq sI$ is the only fractional ideal of $E$ such that $J_{\p} = s_{\p} I_{\p}$. Hence, we obtain a commutative square  
\begin{center}
\begin{tikzcd} \label{mult by idele}
\A_{E,f} \arrow[r] \arrow[d, "s"]
& E/I \arrow[d, "\psi"] \\
\A_{E,f} \arrow[r]
& E/sI
\end{tikzcd}
\end{center}
where $\psi$ is given at the $\p$-component by multiplication by $s_{\p}$.
If $G \subset E/I$ is a subgroup, we denote by $sG \subset E/ sI$ the image of $G$ under $\psi$ in the diagram above. 
In order to extend the action of $E^{\times}$, we make the following definition. 
\begin{defi}
Let $F \subset E$ be the fixed field of the complex conjugation, we define $K_{E} \subset \A^{\times}_{E,f} $ to be the kernel of $$ \A^{\times}_{E,f} \xrightarrow{\text{Nm}                                          _{E/F}} \A^{\times}_{F,f} \twoheadrightarrow C_{F} $$ where $C_{F}$ is the id\`ele class group of $F$. Equivalently, $s \in K_{E}$ if and only if $s \bar{s} \in F^{\times}$
\end{defi}
Let now $(I, \alpha)$ be an ideal lattice and $s \in K_E$. Define $$ s \cdot (I, \alpha)   \coloneqq \bigg( sI, \frac{\alpha}{s \bar{s}} \bigg).$$
If $(I, \alpha)$ is integral, then also $s \cdot (I, \alpha)$ is integral. We show that this construction commutes with formation of duals. 
\begin{lemma} \label{duals}
Let $(I, \alpha)$ be an ideal lattice, and let $s \in K_E $. Then the dual of $ s \cdot (I, \alpha)$ is $ s \cdot (I, \alpha)^{\vee}$.
\end{lemma}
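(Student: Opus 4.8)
The plan is to reduce the statement to the single identity $(sI)^* = sI^*$ of fractional ideals, and then to verify that identity using the closed formula $J^* = (\beta)^{-1}\mathcal{D}_{E}^{-1}\overline{J}^{-1}$ for the dual of an ideal lattice $(J,\beta)$ recalled above.

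First I would unwind both sides. By definition $s\cdot(I,\alpha) = \big(sI,\ \tfrac{\alpha}{s\overline{s}}\big)$, and this is a legitimate ideal lattice precisely because $s\in K_{E}$: the defining property of $K_{E}$ gives $s\overline{s}\in F^{\times}\subseteq E^{\times}$, and since $\alpha=\overline{\alpha}$ also $\tfrac{\alpha}{s\overline{s}}\in F^{\times}$ is fixed by complex conjugation. Applying the dual formula with $J=sI$ and $\beta=\tfrac{\alpha}{s\overline{s}}$ shows that the dual of $s\cdot(I,\alpha)$ is the ideal lattice $\big((sI)^*,\ \tfrac{\alpha}{s\overline{s}}\big)$. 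On the other hand $s\cdot(I,\alpha)^* = s\cdot(I^*,\alpha) = \big(sI^*,\ \tfrac{\alpha}{s\overline{s}}\big)$. Both ideal lattices therefore carry the same bilinear form $\tr_{E/\Q}\!\big(\tfrac{\alpha}{s\overline{s}}\,x\overline{y}\big)$, so it suffices to check that their underlying fractional ideals coincide, i.e. that $(sI)^* = sI^*$.

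For this I would use two compatibilities. First, complex conjugation $\sigma$ commutes with the id\`ele-to-ideal map, so the conjugate of the fractional ideal of $s$ is the fractional ideal of $\overline{s}$; hence $\overline{sI}=(\overline{s}\,\mathcal{O}_{E})\,\overline{I}$ and $\overline{sI}^{-1}=(\overline{s}\,\mathcal{O}_{E})^{-1}\overline{I}^{-1}$. Second, since $s\overline{s}$ lies in $F^{\times}$ it is a global element, so by multiplicativity of the id\`ele-to-ideal map $\big(\tfrac{\alpha}{s\overline{s}}\big)^{-1}\mathcal{O}_{E} = (\alpha)^{-1}(s\overline{s})\mathcal{O}_{E} = (\alpha)^{-1}(s\mathcal{O}_{E})(\overline{s}\,\mathcal{O}_{E})$. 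Substituting into the dual formula,
\[
(sI)^* \;=\; (\alpha)^{-1}(s\mathcal{O}_{E})(\overline{s}\,\mathcal{O}_{E})\,\mathcal{D}_{E}^{-1}\,(\overline{s}\,\mathcal{O}_{E})^{-1}\overline{I}^{-1} \;=\; (s\mathcal{O}_{E})\cdot\big((\alpha)^{-1}\mathcal{D}_{E}^{-1}\overline{I}^{-1}\big) \;=\; (s\mathcal{O}_{E})\,I^{*} \;=\; sI^{*},
\]
which completes the argument.

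The computation itself is just bookkeeping; the only genuine point to handle carefully is the interaction between complex conjugation and the id\`ele $s$ — that the fractional ideal of $\overline{s}$ is the conjugate of the fractional ideal of $s$, and that $s\overline{s}$ is (the diagonal image of) an element of $F^{\times}$, which is exactly the condition $s\in K_{E}$. Once these are in place the dual formula does the rest, so I do not expect a serious obstacle. If one wished to avoid the closed formula for $J^{*}$, the same identity $(sI)^* = sI^*$ could instead be checked place by place, at the cost of tracking how $\sigma$ permutes the places of $E$ above a given place of $F$.
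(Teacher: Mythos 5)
Your proof is correct and follows essentially the same route as the paper: both apply the closed formula $J^{*}=(\beta)^{-1}\mathcal{D}_{E}^{-1}\overline{J}^{-1}$ to $J=sI$, $\beta=\alpha/(s\overline{s})$, and cancel the $\overline{s}$ factors to land on $s\,(\alpha)^{-1}\mathcal{D}_{E}^{-1}\overline{I}^{-1}=sI^{*}$. The only difference is that you spell out the compatibilities (conjugation versus the id\`ele-to-ideal map, and $s\overline{s}\in F^{\times}$) that the paper uses implicitly.
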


\begin{proof}
Indeed, the dual of  $ s \cdot (I, \alpha)$ is $$ \bigg( (s \bar{s}) (\alpha^{-1}) \mathcal{D}^{-1}_{E} (\bar{s}^{-1}) \bar{I}^{-1}, \frac{\alpha}{s \bar{s}} \bigg) = \bigg( (s) (\alpha^{-1}) \mathcal{D}^{-1}_{E}  \bar{I}^{-1}, \frac{\alpha}{s \bar{s}} \bigg)$$
and $$ s \cdot (I, \alpha)^{\vee} = s \cdot \bigg(  \big( \alpha\bar{I} \mathcal{D}_{E} \big)^ {-1}, \alpha \bigg) =  \bigg( (s) (\alpha^{-1}) \mathcal{D}^{-1}_{E}  \bar{I}^{-1}, \frac{\alpha}{s \bar{s}} \bigg).$$
\end{proof}
This commutativity allows us to make the following definition.
\begin{defi}
Let $(I, \alpha, G)$ be an ideal lattice with level structure, and let $s \in K_E$. Then we define 
$$ s \cdot (I, \alpha, G)   \coloneqq \bigg( sI, \frac{\alpha}{s \bar{s}}, sG \bigg),$$
where $sG$ is the image of $G$ under multiplication by $s$ $$ E/I^{\vee} \rightarrow E/sI^{\vee} =  E/(sI)^{\vee}.$$ 
\end{defi}

\subsection{Generalization of Proposition \ref{Infinite pic 20}} 
In this subsection we prove an analogue of Proposition \ref{Infinite pic 20} for any CM number field $E$ with $[E \colon \Q] \leq 10$. 

\begin{defi} \label{definition k3 type}
A Hodge structure of weight two on $\Lambda$ (the K3 lattice) with $\dim_\C \Lambda^{2,0} =1$ and such that any $\omega \in \Lambda^{2,0}-0$ satisfies 
\begin{itemize}
\item $(\omega, \omega) = 0$;
\item $(\omega, \overline{\omega}) > 0$;
\end{itemize}
is said to be of \textit{K3 type}.
\end{defi}
\begin{rmk}
This implies that the whole Hodge structure can be reconstructed by $\omega$, since $\Lambda^{1,1}$ corresponds to the complexification of $\langle \text{Re}(\omega), \text{Im}(\omega) \rangle^{\perp} \subset \Lambda_\R$, see Chapter 6, Proposition 1.2 of \cite{MR3586372}. 
\end{rmk}
The following theorem is the surjectivity of the period map (see Chapter 6, Remark 3.3. of \cite{MR3586372}). 
\begin{thm}
Let us consider $\Lambda$ endowed with a Hodge structure of K3 type. Then there exists a complex K3 surface $X$ with a Hodge isometry $\Lambda \cong \mathrm{H}^2(X,\Z)$.
\end{thm}
\begin{prop} \label{key proposition} 
Let $E$ be a CM number field with $[E \colon \Q] \leq 10$. Then there are infinitely many $\C$-isomorphism classes of principal K3 surfaces with CM by $E$. 
\end{prop}

\begin{proof}
This is a consequence of Corollary 1.12.3 in Nikulin's paper \cite{MR525944}. Let $S$ be an even lattice of signature $(s_{(+)}, s_{(-)})$ and let $\Lambda$ be an even unimodular lattice of signature $(\lambda_{(+)},\lambda_{(-)})$. Nikulin's result says that a primitive embedding $S \hookrightarrow \Lambda$ exists if the following conditions are satisfied:
\begin{enumerate}
\item $\lambda_{(-)} - s_{(-)} \geq 0$ and $\lambda_{(+)} - s_{(+)} \geq 0$;
\item Let $g$ be the minimum number of generators of $S^\vee/S$. Then $\lambda_{(+)} + \lambda_{(-)} -  s_{(+)}  - s_{(-)} > g.$ 
\end{enumerate}
Note that, in our case, where the lattices are non-degenerate, $\lambda_{(+)} + \lambda_{(-)} = \text{rank}(\Lambda)$ and $s_{(+)}  + s_{(-)} = \text{rank}(S)$. Moreover, $g \leq \text{rank}(S)$ always. In particular, a primitive embedding exists every time that \begin{equation} \label{rank condition Nikulin}
\text{rank}(\Lambda) > 2 \cdot \text{rank}(S)
\end{equation}
Let us now prove the proposition. Write $[E \colon \Q] =2n$ with $n \leq 5$. Consider $\alpha \in F^{\times}$ with $\alpha \mathcal{D}_{E} \subset \Oo_E$, so that the ideal lattice $(\Oo_E, \alpha)$ is an even integral lattice, and assume that only one embedding $\sigma' \colon F \hookrightarrow \C$ satisfies $\sigma'(\alpha) > 0$ (note that one can always find such an element of $F$). Let us denote by $\sigma \colon E \hookrightarrow \C$ an extension of $\sigma'$ (the other extension will be given by $\bar{\sigma}$). This choice of $\alpha$ ensures that the signature of $(\Oo_E, \alpha)$ is $(2, 2n-2)$. We would like to produce an algebraic K3 surface using the surjectivity of the period map. Since $[E \colon \Q] \leq 10$ and by the choice of $\alpha$, we readily check that conditions $(1),(2),(3)$ above are satisfied for $(\Oo_E, \alpha)$ and $\Lambda$, so we can find a primitive embedding of lattices $(\Oo_E, \alpha) \subset \Lambda$. We want now to endow $\Lambda$ with a Hodge structure which corresponds to a K3 surface with CM by $\Oo_E$. To do so, consider again the decomposition $$\Oo_E \otimes \C = \bigoplus_{\tau \colon E \rightarrow \C} \C_\tau$$ and put $\Lambda^{2,0}   \coloneqq \C_\sigma$, where we consider $\Oo_E \otimes \C \subset \Lambda \otimes \C$. Let us call $X$ the corresponding K3 surface. It is straightforward to show that $T(X) = (\Oo_E, \alpha)$, and that $\End_{\Hdg}(T(X)) = \Oo_E$. To show that this K3 surface is algebraic it is sufficient to find a class $L \in \NS(X)$ with $L^2 > 0$ by Theorem IV.6.2 of \cite{Barth}. But this class must exists because the signature of $\NS(X)$ is $(1, 21-2n)$, so that $X$ is algebraic. Finally, note that we can produce infinitely many $\alpha$'s such that the ideal lattices $(\Oo_E, \alpha)$ are pairwise non-isomorphic, so that we obtain infinitely many $\C$-isomorphism classes of K3 surfaces with CM by $\Oo_E$. 
\end{proof}
\begin{rmks}

\begin{enumerate}
\item This latter consideration shows perhaps the biggest different between the theory of CM K3 surfaces and the theory of CM abelian varieties. If $(I, \alpha)$ is an ideal lattice as above, then for every totally positive $f \in \Oo_E$ also $(I,f \alpha)$ corresponds to the transcendental lattice of a K3 surface. Moreover, if $f$ is not a unit, these two surfaces cannot be isomorphic. This allows one to obtain infinitely many different K3 surfaces just by changing the polarization. On the contrary, the polarization does not play a role in the classification of abelian varieties with CM by $\Oo_E$, and in this case the only invariants are the \textit{type} and the class of $I$ in $\text{Cl}(E)$. 
\item Note that the surfaces in the proposition are uniquely determined by $T(X)$. In fact a refinement of Nikulin result employed before (Theorem 1.14.4 of \cite{MR525944}) ensures that if $[E \colon \Q] \leq 10$ then the primitive embeddings $T(X) \hookrightarrow \Lambda$ form a single orbit under the natural action of the isometries of $\Lambda$. This implies that any Hodge isometry $ T(X^\sigma) \xrightarrow{\sim} T(X)$ extends to a Hodge isometry $f \colon H^2_B(X^\sigma, \Z) \xrightarrow{\sim} H^2_B(X, \Z)$, and even if $f$ might not be induced by an isomorphism, the K3 surfaces $X^\sigma$ and $X$ are nevertheless isomorphic because of the global Torelli theorem. 
\item If $[E \colon \Q ]\geq 12$, there are infinitely isomorphism classes of complex K3 surfaces with CM by $E$ by the result of Taelman mentioned in the introduction. Of course, the problem here is that these K3 surfaces might not be principal. One can still produce for some CM number field of higher degree an ideal lattice  whose discriminant group has small length, so to apply Nikulin result, but there is no general way to do it. 
\end{enumerate} 
\end{rmks}

\section{Type of a principal K3 surface with CM} \label{Section7}
In this section we introduce the \textit{type} of a K3 surface with CM. We saw in Proposition \ref{key proposition} how to construct Hodge structures of K3 type with CM starting from an integral ideal lattice. One starts with a CM number field $E$ and an embedding $\sigma \colon E \hookrightarrow \C$, and considers an even ideal lattice with level structure $(I, \alpha, G)$, with $\alpha \in F^\times$ such that $(\alpha)\D_E \subset \Oo_E$ and only $\sigma, \bar{\sigma} \colon E \hookrightarrow \C$ satisfies $\sigma(\alpha) >0$. To this data one associates a polarized Hodge structure of weight zero together with a level structure that we will denote by $(I, \alpha, G, \sigma)$, with $I^{1,-1} = \C_\sigma$. 
Let now $(X, B, \iota)$ be a principal CM $K3$ surface $X / \C$ with level structure $B \subset \Br(X)$ and an isomorphism $\iota \colon E \rightarrow E(X)$. Via the map $\iota$, we consider $T(X)$ an $\Oo_E$-module. 
\begin{defi} \label{TYPE}

We say that $(T(X), B, \iota)$ is of type $(I, \alpha, G, \sigma)$ if there exists an isomorphism of $\Oo_E-$modules $$\Phi \colon T(X) \xrightarrow{\sim} I $$ such that: 
\begin{enumerate}
\item $(v,w)_{X} = \tr_{E/ \Q} \Big( \alpha \Phi(v) \overline{\Phi(w)} \Big)$ for every $v,w \in T(X)$;
\item If $\Phi^{\vee} \colon T(X)^{\vee}  \rightarrow  I^{\vee} $ is the induced map on dual lattices, then $$\Phi^{\vee} \otimes \Q / \Z \colon  E/ I^{\vee} \rightarrow \Br(X) $$ sends $G$ isomorphically to $B$;
\item $\sigma_X \circ \iota = \sigma. $
\end{enumerate}

\end{defi}

\begin{rmks}\leavevmode
\begin{enumerate}
\item Here, with $\Phi^\vee$ we mean the induced map $$T(X)^\vee = \{v \in T(X)_\Q \colon (v,x) \in \Z \,\, \text{for all} \,\, x \in T(X) \} \rightarrow I^\vee,$$ where $I^\vee$ was defined in \eqref{dual lattice definition}. 
\item It may seem that fixing an abstract field $E$ together with the maps $\iota$ and $\sigma$ is redundant, since to every K3 surface $X / \C$ with CM one has canonically associated its reflex field $E$ (already in $\C$) together with an isomorphism $\sigma_X \colon E(X) \rightarrow E$. Fixing an abstract field $E$ allows us to keep track of the $\Aut(\C)$-action on $E(X)$: if $\tau \in \Aut(\C)$, we put $(T(X), B, \iota)^{\tau} = (T(X^{\tau}), \tau_*B,\tau^{ad} \circ  \iota).$ See Lemma \ref{lemma:nuovo}. 
\item Every CM $K3$ surface has a type: let $E \xrightarrow{\sigma} \C$ be its reflex field, put $\iota   \coloneqq \sigma_X^{-1}$ and choose $0 \neq v \in T(X) $.  The inverse image of $T(X)$ under the isomorphism $E \rightarrow T(X)_{\Q},$ $e \mapsto \iota(e) \cdot v$ is a lattice in $E$ invariant under the action of $\mathcal{O}_{E}$, hence it is a fractional ideal. By the non-degeneracy of the trace, we can find unique $\alpha \in E$ as in Definition \ref{TYPE}.
\end{enumerate}
\end{rmks}
\begin{defi} \label{iso k3 tuples}
Let $X, Y / \C$ be two principal K3 surfaces with CM. 
We say that the two triples $(T(X), B, \iota_X)$ and $(T(Y), C, \iota_Y)$ are isomorphic if there exists a Hodge isometry $f \colon T(X) \xrightarrow{\simeq} T(Y)$ such that 
\begin{enumerate}
\item $f^{ad} \circ \iota_X = \iota_Y$, where $f^{ad} \colon E(X) \rightarrow E(Y)$ is the induced isomorphism and
\item $f_{*} \colon \Br(X) \rightarrow \Br(Y)$ restricts to an isomorphism between $B$ and $C$, where $f_{*}$ is the induced map on Brauer groups (introduced in the discussion before Definition \ref{Level Structure T}). 
\end{enumerate}

\end{defi}
The following is an instance of point (2) in the remark above. 
\begin{lemma} \label{lemma:nuovo}
Let $X / \C$ be a principal K3 surface with CM and let $\iota \colon E \rightarrow E(X)$ be an isomorphism. Let $\tau \in \Aut(\C)$, and suppose that $(T(X), \iota) \cong (T(X^{\tau}), \tau^{ad} \circ \iota)$. Then $\tau$ fixes the reflex field of $X$. 
\end{lemma}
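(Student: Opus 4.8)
The plan is to use the commutative diagram from the proof of Proposition \ref{reflex field and CM}, which relates the map $\tau^{ad}$ to the action of $\tau$ on $\C$ via the reflex embeddings. Recall that for any $\tau \in \Aut(\C)$ one has a commutative square with vertical maps $\sigma_{X}$ and $\sigma_{X^{\tau}}$, horizontal maps $\tau^{ad}$ and $\tau$, so that $\sigma_{X^{\tau}} \circ \tau^{ad} = \tau \circ \sigma_X$ as maps $E(X) \to \C$. The reflex field of $X$ is $\sigma_X(E(X)) \subset \C$, and $\tau$ fixes it if and only if $\tau \circ \sigma_X = \sigma_X$, which by the diagram is equivalent to $\sigma_{X^{\tau}} \circ \tau^{ad} = \sigma_X$, i.e. $\sigma_{X^\tau} = \sigma_X \circ (\tau^{ad})^{-1}$.

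Next I would unpack the hypothesis. A Hodge isometry $f \colon T(X) \xrightarrow{\sim} T(X^\tau)$ with $f^{ad} \circ \iota = \tau^{ad} \circ \iota$ forces $f^{ad} = \tau^{ad}$ as isomorphisms $E(X) \to E(X^\tau)$ (since $\iota$ is an isomorphism). Now I claim that for any Hodge isometry $f$, the induced isomorphism $f^{ad}$ is compatible with the reflex embeddings, namely $\sigma_{X^\tau} \circ f^{ad} = \sigma_X$. Indeed, if $0 \neq \omega \in T^{1,-1}(X)$, then $f_\C(\omega)$ spans $T^{1,-1}(X^\tau)$ (a Hodge isometry respects the Hodge decomposition), and for $\beta \in E(X)$ the endomorphism $f^{ad}(\beta) = f \circ \beta \circ f^{-1}$ acts on $f_\C(\omega)$ by $\sigma_X(\beta)$, since $\beta$ acts on $\omega$ by $\sigma_X(\beta)$ by the normalisation of the action. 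But by definition $f^{ad}(\beta)$ acts on $f_\C(\omega) \in T^{1,-1}(X^\tau)$ by $\sigma_{X^\tau}(f^{ad}(\beta))$. Hence $\sigma_{X^\tau}(f^{ad}(\beta)) = \sigma_X(\beta)$ for all $\beta$, which is the claimed identity.

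Combining the two displayed identities, since $f^{ad} = \tau^{ad}$ we get $\sigma_{X^\tau} \circ \tau^{ad} = \sigma_{X^\tau} \circ f^{ad} = \sigma_X$, which is exactly the condition equivalent to $\tau$ fixing $\sigma_X(E(X))$, the reflex field of $X$. This completes the argument.

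The only delicate point — and the step I would treat most carefully — is the verification that a Hodge isometry $f$ (which is only assumed $\Q$-linear and compatible with the intersection form and Hodge structure, not a priori $E$-linear for any fixed identification) induces $f^{ad}$ compatible with the two reflex embeddings; this rests on the normalisation "$\sigma_X^{-1}(\alpha)$ acts on $\omega \in T^{1,-1}(X)$ by multiplication by $\alpha$" recalled after Proposition \ref{reflex}, together with the fact that a Hodge isometry sends $T^{1,-1}(X)$ isomorphically onto $T^{1,-1}(X^\tau)$. Everything else is a formal chase through the diagram of Proposition \ref{reflex field and CM}.
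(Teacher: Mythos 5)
Your argument is correct and follows essentially the same route as the paper's: both combine the identity $\sigma_{X^{\tau}} \circ \tau^{ad} = \tau \circ \sigma_X$ from the proof of Proposition \ref{reflex field and CM} with the compatibility $\sigma_{X^{\tau}} \circ f^{ad} = \sigma_X$ for a Hodge isometry $f$, and the observation that the hypothesis forces $f^{ad} = \tau^{ad}$. The only difference is that you spell out the verification of $\sigma_{X^{\tau}} \circ f^{ad} = \sigma_X$ via the normalised action on $T^{1,-1}$, which the paper simply asserts; that extra detail is accurate and welcome.
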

\begin{proof}
Note that $\sigma_X = \sigma_{X^{\tau}} \circ f^{ad}$ since $f$ is a Hodge isometry. During the proof of Proposition \ref{reflex field and CM}, we also proved that $\sigma_{X^{\tau}} \circ \tau^{ad} = \tau \circ \sigma_X$. By assumption, $f^{ad} \circ \iota = \tau^{ad} \circ \iota,$ i.e. $f^{ad} = \tau^{ad}$. Hence, $\sigma_X = \tau \circ \sigma_X$, i.e. $\tau$ fixes the reflex field of $X$. (On the other hand, if $X$ can be defined over $\Q$, then $T(X) \cong T(X^{\tau})$ for every $\tau \in \Aut(\C)$).
\end{proof}

\begin{lemma} \label{Uniq}
Suppose that $(T(X), B, \iota)$ is of type $(I, \alpha, G, \sigma)$ and let $\Phi$ and $\Phi'$ be two maps as in Definition \ref{TYPE}. Then there exists a root of unity $\mu \in \Oo_E^{\times}$ such that $\Phi = \mu \Phi'$. 
\end{lemma}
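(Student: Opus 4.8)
The plan is to reduce the whole statement to the structure of $\Oo_E$-module automorphisms of a fractional ideal together with two standard arithmetic facts. First I would observe that, since $\Phi$ and $\Phi'$ are both isomorphisms of $\Oo_E$-modules $T(X) \xrightarrow{\sim} I$, the composite $\Phi \circ (\Phi')^{-1}$ is an $\Oo_E$-linear automorphism of $I$. Extending scalars to $\Q$, it becomes an $E$-linear automorphism of $I \otimes_\Z \Q = E$, hence multiplication by a unique $u \in E^{\times}$; the requirement $uI = I$ together with the fact that the multiplier ring $(I : I)$ of a fractional ideal over the maximal order $\Oo_E$ equals $\Oo_E$ forces $u \in \Oo_E^{\times}$. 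So $\Phi = u\Phi'$, and the lemma is reduced to proving that $u$ is a root of unity.

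Next I would feed in condition (1) of Definition \ref{TYPE}, which both $\Phi$ and $\Phi'$ satisfy. Writing $(v,w)_X = \tr_{E/\Q}\big(\alpha\Phi(v)\overline{\Phi(w)}\big) = \tr_{E/\Q}\big(\alpha\Phi'(v)\overline{\Phi'(w)}\big)$ and substituting $\Phi = u\Phi'$ (using that conjugation is a ring homomorphism, so $\overline{u\Phi'(w)} = \overline{u}\,\overline{\Phi'(w)}$) yields $\tr_{E/\Q}\big(\alpha(u\overline{u}-1)\,\Phi'(v)\overline{\Phi'(w)}\big) = 0$ for all $v,w \in T(X)$. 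Since $\Phi'$ is surjective onto $I$ and $I \otimes_\Z \Q = E$, the products $\Phi'(v)\overline{\Phi'(w)}$ span $E$ over $\Q$, so the $\Q$-linear functional $x \mapsto \tr_{E/\Q}\big(\alpha(u\overline{u}-1)x\big)$ vanishes identically on $E$. Non-degeneracy of the trace pairing then gives $\alpha(u\overline{u}-1) = 0$, and since $\alpha \neq 0$ we get $u\overline{u} = 1$. (Note that conditions (2) and (3) of Definition \ref{TYPE} are not needed for this argument.)

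Finally I would invoke the classical fact that a unit $u$ of a CM field satisfying $u\overline{u} = 1$ must be a root of unity: under every embedding $\tau \colon E \hookrightarrow \C$, complex conjugation on $E$ corresponds to complex conjugation on $\C$, so $|\tau(u)|^2 = \tau(u)\overline{\tau(u)} = \tau(u\overline{u}) = 1$; thus the algebraic integer $u$ has all of its archimedean conjugates on the unit circle, and Kronecker's theorem forces $u$ to be a root of unity. Setting $\mu := u \in \Oo_E^{\times}$ then gives $\Phi = \mu\Phi'$. I do not anticipate a real obstacle here, since the statement is essentially a bookkeeping lemma; the only points that need care are using condition (1) correctly to pin down $u\overline{u} = 1$ and citing the two standard inputs (the multiplier ring of a fractional ideal over a maximal order, and Kronecker's theorem).
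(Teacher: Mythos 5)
Your proof is correct and follows essentially the same route as the paper, whose entire argument is the one-line observation that $\Phi' \circ \Phi^{-1}$ is an $E$-linear integral isometry of the ideal lattice $(I,\alpha)$ and hence a root of unity. You have simply unpacked that sentence in full: $\Oo_E$-linearity plus $(I:I)=\Oo_E$ gives $u \in \Oo_E^{\times}$, compatibility with the trace form gives $u\overline{u}=1$, and Kronecker's theorem finishes.
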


\begin{proof}
Indeed, the map $\Phi' \circ \Phi^{-1} \colon (I , \alpha) \rightarrow (I, \alpha)$ is an isometry, hence a root of unity. 
\end{proof}

We are ready to prove the following proposition. 
\begin{prop} \label{type thm}
Let $(T(X), B, \iota_X)$ be of type $(I, \alpha, G, \sigma)$ and let $(T(Y), C, \iota_Y)$ be of type  $(J, \beta, H, \theta)$. Then $(T(X), B, \iota_X) \cong (T(Y), C, \iota_Y)$ if and only if $(I, \alpha, G) \cong (J, \beta, H)$ and $\sigma = \theta$. 
\end{prop}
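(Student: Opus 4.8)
The plan is to transport everything through the two type maps. First I would fix isomorphisms of $\Oo_E$-modules $\Phi\colon T(X)\xrightarrow{\sim} I$ and $\Psi\colon T(Y)\xrightarrow{\sim} J$ realizing the two types, as in Definition \ref{TYPE}. The whole proof then rests on a single observation: under $\Phi$ and $\Psi$, a Hodge isometry $f\colon T(X)\to T(Y)$ satisfying the two conditions of Definition \ref{iso k3 tuples} corresponds to the multiplication map $m_e\colon I\to J$, $x\mapsto ex$, for a suitable $e\in E^{\times}$ with $eI=J$ — and conversely — after which the remaining conditions translate into each other term by term.

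For ($\Rightarrow$), given such an $f$ I would set $g:=\Psi\circ f\circ\Phi^{-1}\colon I\to J$. Since $f^{ad}$ is conjugation by $f$, the relation $f^{ad}\circ\iota_X=\iota_Y$ says exactly that $f$ is $\Oo_E$-linear, so $g$ is an $\Oo_E$-linear isomorphism of fractional ideals; extending scalars to $\Q$ it is an $E$-linear automorphism of $E$, hence $g=m_e$ with $e:=g(1)$ and $eI=J$. Because $f$ is a Hodge isometry with $f^{ad}\circ\iota_X=\iota_Y$, the argument in the proof of Lemma \ref{lemma:nuovo} gives $\sigma_X=\sigma_Y\circ f^{ad}$, so $\sigma=\sigma_X\circ\iota_X=\sigma_Y\circ f^{ad}\circ\iota_X=\sigma_Y\circ\iota_Y=\theta$. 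That $f$ is an isometry, combined with condition (1) of Definition \ref{TYPE} for both surfaces and the identity $\Psi(f(v))=e\Phi(v)$, gives $\tr_{E/\Q}\bigl((\alpha-e\overline{e}\beta)\,x\overline{y}\bigr)=0$ for all $x,y\in I$, and non-degeneracy of the trace then yields $\alpha=e\overline{e}\beta$; in particular $eI^{*}=(eI)^{*}=J^{*}$, so $m_e$ descends to a map $E/I^{*}\to E/J^{*}$. For the level structures I would use the square with horizontal arrows $f_{*}\colon\Br(X)\to\Br(Y)$ and $m_e\colon E/I^{*}\to E/J^{*}$ and vertical arrows $\Phi^{*}\otimes\Q/\Z$ and $\Psi^{*}\otimes\Q/\Z$: writing all four maps as the maps induced on $\Hom(-,\Q/\Z)$ by $f^{-1}$, $\Phi^{-1}$, $\Psi^{-1}$ and $m_e^{-1}$ respectively (via the intersection and trace pairings), the square commutes precisely because $\Psi\circ f=m_e\circ\Phi$. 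Since $f_{*}$ sends $B$ to $C$, $\Phi^{*}\otimes\Q/\Z$ sends $B$ isomorphically to $G$, and $\Psi^{*}\otimes\Q/\Z$ sends $C$ isomorphically to $H$, this forces $m_e(G)=H$, hence $(I,\alpha,G)\cong(J,\beta,H)$ and $\sigma=\theta$.

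For ($\Leftarrow$), given $e$ with $J=eI$, $\alpha=e\overline{e}\beta$, $m_e(G)=H$ and $\sigma=\theta$, I would put $f:=\Psi^{-1}\circ m_e\circ\Phi\colon T(X)\to T(Y)$, which is an isomorphism. It is $\Oo_E$-linear since $\Phi,\Psi$ are and $m_e$ is $E$-linear, hence $f^{ad}\circ\iota_X=\iota_Y$; it is an isometry by the same trace identity read backwards using $\alpha=e\overline{e}\beta$. It is a morphism of Hodge structures because, identifying $T(X)_{\C}=\bigoplus_{\tau\colon E\hookrightarrow\C}\C_\tau$ and $T(Y)_{\C}=\bigoplus_\tau\C_\tau$ via $\iota_X,\iota_Y$, the normalization of the action on a nonzero $2$-form recalled after Proposition \ref{reflex} identifies $T^{1,-1}(X)$ with $\C_\sigma$ and $T^{1,-1}(Y)$ with $\C_\theta$ (and similarly for the $(0,0)$ and $(-1,1)$ pieces), so an $E$-linear map respects the Hodge decomposition as soon as $\sigma=\theta$; thus $f$ is a Hodge isometry. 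Finally, the same commutative square — which commutes since $\Psi\circ f\circ\Phi^{-1}=m_e$ by construction — shows $f_{*}$ sends $B$ to $C$. Hence $f$ realizes $(T(X),B,\iota_X)\cong(T(Y),C,\iota_Y)$.

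I expect the main obstacle to be the interface between the Hodge datum and the embeddings $\sigma,\theta$, together with the bookkeeping for the level structures: one must pin down the normalization identifying $T^{1,-1}$ with the $\sigma$-eigenspace (so that $\sigma=\theta$ is exactly the condition upgrading an $\Oo_E$-linear isometry to a Hodge isometry in ($\Leftarrow$), and is forced in ($\Rightarrow$)), and carefully identify $f^{ad}$ with conjugation by $f$ and $f_{*},\Phi^{*}\otimes\Q/\Z,\Psi^{*}\otimes\Q/\Z$ with the maps induced on $\Hom(-,\Q/\Z)$, so that the level-structure square genuinely commutes. The trace computations, the non-degeneracy step, and the identity $eI^{*}=(eI)^{*}=J^{*}$ are routine.
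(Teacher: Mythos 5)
Your proposal is correct and follows essentially the same route as the paper's proof: transport $f$ through the type maps to identify it with multiplication by some $e\in E^{\times}$, deduce $eI=J$, $\alpha=e\overline{e}\beta$ and $eG=H$ from the isometry and the induced square on Brauer groups, and use the normalization of the $E$-action on a nonzero $2$-form to show that $\sigma=\theta$ is exactly what makes the $E$-linear isometry a Hodge isometry in the converse. The only cosmetic point is that in the converse direction you should establish that $f$ respects the Hodge decomposition before invoking $f^{ad}$ (which is conjugation by $f$ on Hodge endomorphisms), as you in fact do a sentence later.
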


\begin{proof}
Let us prove the implication $(T(X), B, \iota_X) \cong (T(Y), C, \iota_Y) \Rightarrow (I, \alpha, G) \cong (J, \beta, H)$ and $\sigma = \theta$. 
Consider the square 
\begin{center}
\begin{tikzcd} 
T(X) \arrow[r, "\Phi_{X}"] \arrow[d, "f"]
& I\arrow[d, dashrightarrow] \\
T(Y) \arrow[r, "\Phi_{Y}"]
& J,
\end{tikzcd}
\end{center}
where $f$ is a map as in Definition $\eqref{iso k3 tuples}$ and $\Phi_X, \Phi_Y$ are the maps realizing the types of $X$ and $Y$ respectively. By linearity, we see that the dashed arrow is induced by multiplication by some $e \in E^{\times}$, which is also an isometry between the two ideal lattices $(I, \alpha)$ and $(J, \beta)$, i.e. $eI = J$ and $e \bar{e} \beta = \alpha $. The induced square on Brauer groups is  
\begin{center}
\begin{tikzcd} 
\Br(X) \arrow[r, "\Phi_{X}^{\vee}"] \arrow[d, "f_{*}"]
& E/ I^{\vee} \arrow[d, "e"] \\
\Br(Y) \arrow[r, "\Phi_{Y}{\vee}"]
& E / J^{\vee},
\end{tikzcd}
\end{center}
which implies $eG = H$, since $f_*(B) = C$, $\Phi_{X}^{*}(B) = G$ and  $\Phi_{Y}^{\vee}(C) = H$. By the definition of type we see that $\sigma_X \circ \iota_X = \sigma$ and $\sigma_Y \circ \iota_Y = \theta$. Moreover $f^{ad} \circ \iota_X = \iota_Y$ (by Definition \ref{iso k3 tuples}) and $\sigma_X = \sigma_Y \circ f^{ad}$ (since $f$ is a Hodge isometry). Hence, we see that $\sigma = \theta$. On the other hand, suppose that $(I, \alpha, G) \cong (J, \beta, H)$ and that $\sigma = \theta$, and let $e \in E^{\times}$ be an element realizing the equivalence. Consider the diagram 
\begin{center}
\begin{tikzcd} 
T(X) \arrow[r, "\Phi_{X}"] \arrow[d, dashrightarrow]
& I\arrow[d, "e"] \\
T(Y) \arrow[r, "\Phi_{Y}"]
& J,
\end{tikzcd}
\end{center}
and call $f$ the dashed arrow. Then, $f$ is an isometry between the lattices $T(X)$ and $T(Y)$ and satisfies condition 2 in Definition \ref{iso k3 tuples}. We need to prove that it respects the Hodge decomposition and that $f^{ad} \circ \iota_X = \iota_Y$. Since $\sigma = \theta$, we see that $\sigma_X \circ \iota_X = \sigma_Y \circ \iota_Y$. Let $0 \neq \omega \in T^{1,-1}(X)$ be a non-zero two form, and let $x \in E$. We want to show that $$\iota_Y(x) \cdot f(\omega) =\sigma_Y(\iota_Y(x)) f(\omega).$$ We compute 
$$\iota_Y(x) \cdot f(\omega) = \iota_Y(x) \cdot \Phi_Y^{-1} \big(e \Phi_X(\omega) \big) = \Phi_Y^{-1} \big(xe \Phi_X(\omega) \big) = \Phi_Y^{-1} \big(e \Phi_X(\iota_X(x) \cdot \omega) \big) = $$ $$  = f(\iota_X(x) \cdot \omega) = f( \sigma_X(\iota_X(x)) \omega) = \sigma_X(\iota_X(x)) f(\omega) = \sigma_Y (\iota_Y(x))f(\omega).$$
Hence, $f$ respects the Hodge decomposition. As a consequence of this, we must also have that $\sigma_X = \sigma_Y \circ f^{ad}$. Pre-composing with $\iota_X$ and using again the fact that $\sigma_X \circ \iota_X = \sigma_Y \circ \iota_Y$, we conclude. 
\end{proof}

\section{Main theorem of CM for K3 surfaces (after Shimura)} \label{Section8}

The next step is to translate Theorem \ref{Rizov} in the language of ideal lattices. 
\begin{thm} \label{MTSHIMURA}
Let $X/\C$ be a principal $K3$ surface with complex multiplication and reflex field $E \subset \C$. Let $\tau \in \Aut(\C / E)$ and let $s \in \A^{\times}_{E,f}$ be a finite id\`ele such that $\art(s) = \tau_{|E^{ab}}$. Suppose that $(T(X), B, \iota)$ is of type $(I,\alpha, G, \sigma)$. Then $(T(X^{\tau}), \tau_{*}B, \tau^{ad} \circ \iota )$ is of type $$\frac{s}{\bar{s}}  \cdot \big( I, \alpha, G, \sigma \big).$$
Moreover if $\Phi_{X}$ is a map realizing the \textit{type} of $X$, there exists a unique map $\Phi_{X^{\tau}}$ realizing the above \textit{type} of $X^{\tau}$, such that the following commutes
\begin{center}
\begin{tikzcd} 
\Br(X) \arrow[r, "\Phi_{X}^{\vee}"] \arrow[d, "\tau_{*}"]
& E/ I^{\vee} \arrow[d, "\frac{s}{\bar{s}}"] \\
\Br(X^{\tau}) \arrow[r, "\Phi_{X^{\tau}}^{\vee}"]
& E / \frac{s}{\bar{s}} I^{\vee}
\end{tikzcd}
\end{center}
\end{thm}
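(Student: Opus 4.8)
The plan is to apply Rizov's Theorem \ref{MTCM} and transport everything through the type map $\Phi_X$. First I would fix the data: let $\Phi_X \colon T(X) \xrightarrow{\sim} I$ be a map realising the type $(I,\alpha,G,\sigma)$ of $(T(X),B,\iota)$, and let $\eta \colon T(X)_\Q \xrightarrow{\sim} T(X^\tau)_\Q$ be the unique Hodge isometry furnished by Theorem \ref{MTCM}, so that on $\A_f$-cohomology one has $\eta \otimes \A_f = \tau^* \circ (s/\overline{s})^{-1}$ as maps $T(X)_{\A_f} \to T(X^\tau)_{\A_f}$ (reading off the commutative triangle). The idea is to \emph{define} $\Phi_{X^\tau}$ so as to force the square in the statement to commute, and then check that $\Phi_{X^\tau}$ really does realise the asserted type $\tfrac{s}{\overline{s}}\cdot(I,\alpha,G,\sigma)$ of $(T(X^\tau),\tau_*B,\tau^{ad}\circ\iota)$.

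Concretely, I would proceed as follows. (Step 1) Since $\eta$ is an $E$-linear Hodge isometry (being unique, it must commute with the $\U_E$-action, hence with all of $E(X)$ via adjointness, cf. the argument in Lemma \ref{lemma:nuovo}), the composite $\Phi_X \circ \eta^{-1} \colon T(X^\tau) \to I$ is $\Oo_E$-linear for the action on $T(X^\tau)$ through $\tau^{ad}\circ\iota$; its image is an $\Oo_E$-stable lattice in $E$, and I claim this image is exactly $\tfrac{s}{\overline{s}} I$. This is the heart of the argument: one compares the lattice $\eta(T(X^\tau)_{\widehat\Z})$ inside $T(X)_{\A_f} \cong E_{\A_f}$ against $T(X)_{\widehat\Z} = I_{\widehat\Z}$ using the relation $\eta\otimes\A_f = \tau^*\circ(s/\overline{s})^{-1}$ together with the fact that $\tau^*\colon T(X)_{\widehat\Z}\to T(X^\tau)_{\widehat\Z}$ is an isomorphism of $\widehat\Z$-lattices (here is where Theorem \ref{invariance} is implicitly used, to know $\tau^{ad}(\Oo(X))=\Oo(X^\tau)$ so that "type" makes sense on the $\tau$-side). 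Place-by-place, $\eta$ carries $T(X^\tau)_{\Oo_{E,v}}$ to $(s_v/\overline{s}_v)^{-1}\cdot$ (something identified with $I_v$), so $\Phi_X\circ\eta^{-1}$ lands in $(s/\overline s)^{-1}$... — and after fixing orientations of the isometry $\tau^*$ versus $\Phi_X$, the normalisation works out to $\tfrac{s}{\overline s}I$; I would be careful here with the direction of the twist.

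(Step 2) Set $\Phi_{X^\tau} := (\text{mult. by a suitable unit}) \circ \Phi_X \circ \eta^{-1}$, adjusted exactly so that the displayed Brauer-group square commutes; uniqueness of $\Phi_{X^\tau}$ with that property follows from Lemma \ref{Uniq} (two such differ by a root of unity in $\Oo_E^\times$, and the square pins it down). (Step 3) Verify the three conditions of Definition \ref{TYPE} for $\Phi_{X^\tau}$ against $(\tfrac{s}{\overline s}I, \tfrac{\alpha}{(s/\overline s)\overline{(s/\overline s)}}, \tfrac{s}{\overline s}G, \sigma)$. Condition (1): transport the intersection form $(-,-)_{X^\tau}$ through $\eta$ — since $\eta$ is an isometry, $(-,-)_{X^\tau} = \eta^*(-,-)_X$, and $(-,-)_X$ pulls back via $\Phi_X$ to $\tr_{E/\Q}(\alpha x\overline y)$ on $I$, so on $\tfrac{s}{\overline s}I$ one gets $\tr_{E/\Q}(\tfrac{\alpha}{(s/\overline s)\overline{(s/\overline s)}} x\overline y)$ by the substitution $x\mapsto (s/\overline s)x$ — this is exactly the recipe in the definition $s\cdot(I,\alpha):=(sI,\alpha/s\overline s)$ of Section \ref{section 6}, applied with $s$ replaced by $s/\overline s$. (Note $s/\overline s \in K_E$ since $(s/\overline s)\overline{(s/\overline s)}=1\in F^\times$, so the action is defined.) Condition (2): dualise the square using Lemma \ref{duals} — $(\tfrac{s}{\overline s}I)^* = \tfrac{s}{\overline s}I^*$ — and use that $\tau_*\colon \Br(X)\to\Br(X^\tau)$ is, under $\Hom(-,\Q/\Z)$, dual to $\tau^*$, which via the triangle of Theorem \ref{MTCM} is $\eta$ twisted by $s/\overline s$; this gives $\Phi_{X^\tau}^*$ sending $\tau_*B$ to $\tfrac{s}{\overline s}G$, i.e. the displayed square commutes. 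Condition (3): $\sigma_{X^\tau}\circ(\tau^{ad}\circ\iota) = \sigma$, which follows from $\sigma_{X^\tau}\circ\tau^{ad} = \tau\circ\sigma_X$ (proved inside Proposition \ref{reflex field and CM}), $\tau|_E = \mathrm{id}$ (as $\tau\in\Aut(\C/E)$), and $\sigma_X\circ\iota = \sigma$.

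I expect the main obstacle to be Step 1 — correctly matching the integral lattice $\eta(T(X^\tau)_{\widehat\Z})$ with $\tfrac{s}{\overline s}I$, keeping track of the three superimposed identifications (the type isomorphism $\Phi_X$, the $\widehat\Z$-lattice isomorphism $\tau^*$, and the idelic multiplication operator $s/\overline s$ acting place-by-place on $E_{\A_f}$) and, relatedly, nailing down the \emph{direction} of the twist so that one indeed gets $s/\overline s$ and not its inverse. Everything after that is a formal transport of structures through $\eta$ and $\Phi_X$, using the lemmas of Sections \ref{section 6} and \ref{section 7} essentially verbatim.
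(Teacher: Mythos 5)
Your proposal is correct and follows essentially the same route as the paper: invoke Rizov's theorem, use the commuting triangle together with the fact that $\tau^*$ is an isomorphism of $\widehat{\Z}$-lattices to identify $T(X^{\tau})$ with $\eta\bigl(\tfrac{s}{\overline{s}}T(X)_{\widehat{\Z}}\bigr)\cap T(X^{\tau})_{\Q}$ (hence the type is $\tfrac{s}{\overline{s}}\cdot(I,\alpha,G,\sigma)$), and then pin down $\Phi_{X^{\tau}}$ up to a root of unity via Lemma \ref{Uniq} so that the Brauer-group square commutes. The direction of the twist you worried about in Step 1 does come out as $\tfrac{s}{\overline{s}}I$, exactly as in the paper's argument.
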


\begin{proof}
Rizov's Theorem \ref{Rizov} gives a unique Hodge isometry $\eta \colon T(X)_{\Q} \rightarrow T(X^{\tau})_\Q$ such that the following diagram (of isomorphisms) commutes
\begin{center} {
\begin{tikzcd}[row sep= large, column sep =large]
\widehat{T}(X)_\Q \arrow[r, "\eta \otimes \widehat{\Z}"] & \widehat{T}(X^\tau)_\Q \\
\widehat{T}(X)_\Q \arrow[u, "\frac{s}{\bar{s}}"]\arrow[ur, "\tau^*"]
\end{tikzcd}
}
\end{center}
If we consider  $\widehat{T}(X) \subset \widehat{T}(X)_\Q$ and $\widehat{T}(X^\tau)\subset \widehat{T}(X^\tau)_\Q$, then the Galois action $\tau^*$ restricts to an isomorphism of $\widehat{\Z}$-lattices $$\tau^* \colon \widehat{T}(X) \xrightarrow{\sim} \widehat{T}(X^\tau).$$
This means that the two lattices $T(X^{\tau})$ and $ \eta \big( \frac{s}{\bar{s}}\widehat{T}(X) \big) \cap T(X^{\tau})_{\Q}$ inside $\widehat{T}(X^\tau)_\Q$ are actually the same. Since both $\eta$ and multiplication by $\frac{s}{\bar{s}}$ are isometries and since $\tau$ fixes the reflex field by assumptions, we must have that the type of $(T(X^{\tau}), \tau_{*}B, \tau^{ad} \circ \iota)$ is $$\frac{s}{\bar{s}}  \cdot \bigg( I, \alpha, G, \sigma \bigg).$$
Choose a map $\Phi^{'}_{X^{\tau}}$ realizing the above type for $X^{\tau}$. \\
\textbf{Claim:} there exists a unique root of unity $\mu \in \mathcal{O}^{\times}_{E}$ such that the following commutes 

\begin{center} {
\begin{tikzcd}[row sep= large, column sep =large]
\widehat{T}(X) \arrow[r, "\tau^*"] \arrow[d, "\Phi_{X} \otimes \widehat{\Z}"]
& \widehat{T}(X^\tau)\arrow[d, "\Phi^{'}_{X^{\tau}} \otimes \widehat{\Z}"] \\
I \otimes \widehat{\Z} \arrow[r, "\frac{s}{\bar{s}} \mu"']
& \frac{s}{\bar{s}}I \otimes \widehat{\Z}.
\end{tikzcd} }
\end{center}
Indeed, consider the following 

\begin{center} {
\begin{tikzcd}[row sep= large, column sep =large]
T(X)_{\Q} \arrow[r, "\eta"] \arrow[d, "\Phi_{X} \otimes \Q"]
& T(X^{\tau})_{\Q} \arrow[d, "\Phi^{'}_{X^{\tau}} \otimes \Q"] \\
E = I \otimes \Q \arrow[r, dashrightarrow]
& \frac{s}{\bar{s}}I\otimes \Q = E,
\end{tikzcd} }
\end{center}
We can complete the dashed arrow uniquely with multiplication by some element $\mu \in E^{\times}$ with $\mu \overline{\mu} =1$, since $\eta$ is a Hodge isometry. We can enlarge the diagram above as 
\begin{center} {

\begin{tikzcd}[row sep=scriptsize, column sep=scriptsize]
&  \widehat{T}(X)_\Q \arrow[dl, "\frac{s}{\bar{s}}"'] \arrow[dr, "\tau^*"] \arrow[dd, "\Phi_{X} \otimes \A_f" near end] \\
 \widehat{T}(X)_\Q\arrow[rr, crossing over, "\eta \otimes \A_f" near start] \arrow[dd, "\Phi_{X} \otimes \A_f"'] & &  \widehat{T}(X^\tau)_\Q \\
& I_{\A_f} \arrow[dl, "\frac{s}{\bar{s}}"] \arrow[dr, "\frac{s}{\bar{s}}\mu"]\\
 I_{\A_f} \arrow[rr, "\mu"] & & \frac{s}{\bar{s}} I_{\A_f} \arrow[from=uu, "\Phi^{'}_{X^{\tau}} \otimes \A_f" ].\\
\end{tikzcd} }
\end{center}
One can show by diagram chasing that $\mu \big(\frac{s}{\bar{s}}\widehat{I} \big) = \frac{s}{\bar{s}} \widehat{I},$ so that $\mu \in \widehat{\Oo_E} \cap E = \mathcal{O}_E$, and the condition $\mu \overline{\mu} = 1$ forces $\mu$ to be a root of unity. Put $\Phi_{X^{\tau}}   \coloneqq \mu \cdot \Phi^{'}_{X^{\tau}}$. We obtain another commutative diagram analogous to the one above
\begin{center}
\begin{tikzcd}[row sep=scriptsize, column sep=scriptsize]
& \widehat{T}(X)_\Q\arrow[dl, "\frac{s}{\bar{s}}"'] \arrow[dr, "\tau^*"] \arrow[dd, "\Phi_{X} \otimes \A_f" near end] \\
\widehat{T}(X)_\Q\arrow[rr, crossing over, "\eta \otimes \A_f" near start] \arrow[dd, "\Phi_{X} \otimes \A_f"'] & &  \widehat{T}(X^\tau)_\Q \\
& I_{\A_f} \arrow[dl, "\frac{s}{\bar{s}}"'] \arrow[dr, "\frac{s}{\bar{s}}"]\\
 I_{\A_f} \arrow[rr, "1"] & & \frac{s}{\bar{s}} I_{\A_f} \arrow[from=uu, "\Phi_{X^{\tau}} \otimes \A_f" ],\\
\end{tikzcd}
\end{center} 
so that $\Phi_{X^{\tau}}$ is the required map. The unicity comes from Lemma \ref{Uniq}. 
\end{proof}

\section{K3 class group and K3 class field} \label{section K3 class groups}
Before starting this section, let us fix some notations from algebraic number theory that we are going to use through the rest of this paper. Let $E / F$ be a cyclic extension of number fields and write $G:=\Gal(E/F) = \langle \sigma \rangle $. In this section, $E$ will always be a CM field and $F$ its maximal totally real subfield, but in Section \eqref{section cardinality K3 class groups} it will just be a general cyclic extension and most of these notations will not be used until then. Let $ I \subset \Oo_E$ be an ideal, and consider
\begin{itemize}
\item $\mathcal{I}_E$ the group of fractional ideals of $E$;
\item $\mathcal{I}^I_E \subset \mathcal{I}_E$ the group of fractional ideals coprime to $I$;
\item $E^I   \coloneqq \{ e \in E^{\times} \colon e\Oo_E \in \mathcal{I}^I_E \}$;
\item $E^{I,1}   \coloneqq \{ e \in E^{\times} \colon v(e-1) \geq v(I) \,\, \forall \,\, \text{finite place} \,\, v \,\, \text{such that} \,\, v(I) > 0 \}; $
\item $\Oo^I_E   \coloneqq \Oo^{\times}_E \cap E^{I,1} $;
\item $\mathcal{P}^I_{E}   \coloneqq \{ e\Oo_E \colon e \in E^{I,1} \} \subset \mathcal{I}^I_E;$
\item $\Cl_{I}(E)   \coloneqq \mathcal{I}^I_E  / \mathcal{P}^I_{E}$ the ray class group modulo $I$; 
\item An \textit{invariant} ideal is an ideal such that $\sigma(I) = I$;
\item If $I$ is invariant then $\Cl'_I(E)   \coloneqq \Cl_I(E) / \Cl_I(E)^G $. In particular $\Cl'(E)   \coloneqq \Cl(E) / \Cl(E)^G$;
\item $\mathrm{N} \colon E^\times \to F^\times$ the norm morphism.
\item If $I \subset \Oo_E$ is a proper ideal, its support is $$S(I)   \coloneqq \{ \mathfrak{p} \,\, \text{prime ideal of} \,\, E \colon I \subset p\}.$$
\item If $\mathfrak{m}$ is a modulus for $\Oo_F$, i.e. a formal product of a proper ideal and archimedean valuations, we will denote by $e(E/F, \mathfrak{m})  \coloneqq \prod_{v \nmid \mathfrak{m}}e(v)$, where the product is taken over all the places (both finite and archimedean) of $F$ that do not divide $\mathfrak{m}$ and $e(v)$ denotes their ramification index in the field extension $E/F$;
\item Let $E$ be any number field, for every ideal $I \subset \Oo_E$ we denote by $\phi_E(I)   \coloneqq | (\Oo_E / I)^{\times}|$ the associated Euler's totient function. 
\end{itemize}

Given a CM number field $E$, Theorem \ref{MTSHIMURA} suggests the introduction of a class group (as meant in Chapter 9 of Shimura's book \cite{shimura2014arithmetic}), the K3 class group $G_{K3}(E)$ of $E$, and of its related class field, an abelian extension of $E$ obtained via class field theory, with Galois group isomorphic to $G_{K3}(E)$. These objects will be of essential use later on, especially in the computations of the fields of moduli in the next section. In order to introduce them, we recall that by $$ \U_{E} \subset \Res_{E / \Q}(\mathbb{G}_{m}) $$
we mean $E$-linear unitary group, cut out by the equation $e \bar{e} =1$. 

\begin{defi}
Let $E$ be a CM number field. We define the K3 class group of $E$ to be the double coset
$$G_{K3}(E)   \coloneqq \U_{E}(\Q) \backslash \U_{E}(\A_{f}) / \tilde{\U},$$
where $\tilde{\U}$ is the subgroup generated by all the $u \in  \U_{E}(\A_{f})$ such that for every finite place $v$, $u_{v}$ is a unit, i.e., $$\tilde{\U} = \{ u \in \U_{E}(\A_{f}) \colon u \mathcal{O}_{E} = \mathcal{O}_{E} \} = \{ u \in \U_{E}(\A_{f}) \colon u \in \widehat{\mathcal{O}}_{E}^\times \}.$$
\end{defi}

There is a canonical, continuous map from the finite id\`eles of $E$ to $G_{K3}(E)$, namely
\begin{align} \label{maps idele Gk3}
\A_{E,f}^\times &\to G_{K3}(E) \\
s &\mapsto \frac{s}{\bar{s}} \nonumber,
\end{align}
which is a surjection due to Hilbert's Theorem 90 for id\`eles. 
\begin{defi} \label{SE}
The kernel of the above map $\A_{E,f}^\times \to G_{K3}(E) $ is denoted by $S_{E}$, and it corresponds to $$S_{E} = \{ s\in \A_{E,f}^\times \colon \exists e \in U_{E}(\Q) \colon e \frac{s}{\bar{s}} \mathcal{O}_{E} = \mathcal{O}_{E}  \}$$
\end{defi}
Note that also $E^\times \subset S_{E}$. 
\begin{defi}
The abelian extension of $E$ obtained via class field theory from the subgroup $ S_{E}$ of $\A_{E,f}^\times$ is called the K3 class field of $E$ and it is denoted by $F_{K3}(E)$. 
\end{defi}
Understanding these class fields (the one just introduced and the others to come) will occupy the next two sections. The first step is to relate them to ray class fields, i.e. to abelian extensions of $E$ that we already know. 
\begin{prop} \label{better understanding}
Denote by $K(E)$ the Hilbert class field of $E$ and by $K'(E)$ the subextension of $K(E)$ with Galois group $\cong \Cl'(E)$. There is a diagram of abelian extensions

\begin{center}
\begin{tikzcd}[every arrow/.append style=dash]

& K(E) 
 \arrow{dd} & \\
 F_{K3}(E) 
  \arrow{dr} & \\
& K' (E)
 \arrow{d}\\
& E 

\end{tikzcd}

\end{center}
with $$\Gal( F_{K3}(E) / K'(E)) \cong \frac{\mathcal{O}^{\times}_{F} \cap \mathrm{N}(E^{\times}) }{ \mathrm{N}(\Oo^{\times}_{E})}$$. \\

\end{prop}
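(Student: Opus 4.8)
The strategy is to read off $\Gal(F_{K3}(E)/K'(E))$ from class field theory by expressing both $F_{K3}(E)$ and $K'(E)$ as subfields of the Hilbert class field $K(E)$ and comparing the corresponding subgroups of $\Cl(E)\cong\Gal(K(E)/E)$. Write $\sigma$ for the generator of $G=\Gal(E/F)$ (complex conjugation), and let $\widehat{\mathcal O}_E^{\times}:=\prod_{v\nmid\infty}\mathcal O_{E,v}^{\times}\subset\A_{E,f}^{\times}$, so that $E^{\times}\widehat{\mathcal O}_E^{\times}$ is the subgroup attached to $K(E)$. First I would verify $E^{\times}\widehat{\mathcal O}_E^{\times}\subseteq S_E$: for $s=eu$ with $e\in E^{\times}$, $u\in\widehat{\mathcal O}_E^{\times}$, the idèle $s/\overline s$ generates the fractional ideal $(e)\overline{(e)}^{-1}$, which is annihilated by $\overline e/e\in\U_E(\Q)$, so $s\in S_E$ by Definition \ref{SE}. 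Hence $F_{K3}(E)\subseteq K(E)$, and the subgroup $H_{K3}\subseteq\Cl(E)$ attached to $F_{K3}(E)$ is the image of $S_E$ under $s\mapsto[s\mathcal O_E]$, with $\Gal(K(E)/F_{K3}(E))\cong H_{K3}$. If $s\in S_E$ then $s\mathcal O_E\cdot\overline{s\mathcal O_E}^{-1}$ is principal, so $[s\mathcal O_E]$ is $\sigma$-invariant; thus $H_{K3}\subseteq\Cl(E)^{G}$, which is precisely the subgroup of $\Cl(E)$ cut out by $K'(E)$. This yields the tower $E\subseteq K'(E)\subseteq F_{K3}(E)\subseteq K(E)$ and the identification $\Gal(F_{K3}(E)/K'(E))\cong\Cl(E)^{G}/H_{K3}$.

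The core of the argument is then an explicit isomorphism $\Cl(E)^{G}/H_{K3}\xrightarrow{\ \sim\ }\frac{\mathcal O_F^{\times}\cap N(E^{\times})}{\mathcal O_F^{\times 2}}$. Given $[\mathfrak a]\in\Cl(E)^{G}$, the relation $[\mathfrak a]=[\overline{\mathfrak a}]$ gives $\mathfrak a\overline{\mathfrak a}^{-1}=(\gamma)$ for some $\gamma\in E^{\times}$, unique up to $\mathcal O_E^{\times}$; comparing ideals shows $(\gamma\overline\gamma)=\mathcal O_E$, so $N(\gamma)=\gamma\overline\gamma$ is a unit of $F$ lying in $N(E^{\times})$. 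I would send $[\mathfrak a]$ to the class of $N(\gamma)$. Well-definedness in $\mathfrak a$ is immediate (replacing $\mathfrak a$ by $\mathfrak a(\beta)$ multiplies $\gamma$ by $\beta/\overline\beta$, of trivial norm); independence of the choice of $\gamma$ requires the classical identity $N(\mathcal O_E^{\times})=\mathcal O_F^{\times 2}$, which comes from Kronecker's theorem that every unit $v$ of a CM field satisfies $v/\overline v\in\mu_E$, together with a short computation rewriting $N(v)=v\overline v$ as a square in $\mathcal O_F^{\times}$. The map is clearly a homomorphism, and by the explicit shape of $S_E$ its kernel is exactly $H_{K3}$: indeed $[\mathfrak a]\in H_{K3}$ iff $(\gamma)$ admits a generator $e$ with $e\overline e=1$, i.e.\ iff $N(\gamma)\in N(\mathcal O_E^{\times})=\mathcal O_F^{\times 2}$.

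It remains to prove surjectivity. If $u=N(\delta)\in\mathcal O_F^{\times}\cap N(E^{\times})$ with $\delta\in E^{\times}$, then $(\delta)\overline{(\delta)}=(u)=\mathcal O_E$, so the fractional ideal $(\delta)$ is $\sigma$-anti-invariant. Since $\mathcal I_E$ is a permutation $G$-module — each rational prime of $F$ contributing either a $\mathbb Z$ with trivial action (inert or ramified) or a copy of $\mathbb Z[G]$ (split) — it is cohomologically trivial, so every $\sigma$-anti-invariant fractional ideal has the form $\mathfrak a\overline{\mathfrak a}^{-1}$; writing $(\delta)=\mathfrak a\overline{\mathfrak a}^{-1}$ exhibits $[\mathfrak a]\in\Cl(E)^{G}$ mapping to the class of $u$. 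Chaining the three steps gives the desired isomorphism, and hence the statement. The step I expect to be most delicate is the unit-theoretic input: that the ambiguity in $\gamma$, and the failure-of-principality obstruction detecting $H_{K3}$, are governed precisely by $\mathcal O_F^{\times 2}$ rather than by a possibly larger group of norms of units; everything else is bookkeeping with idèles, ideals and ray/ideal class groups.
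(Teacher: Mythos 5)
Your proof is correct and follows essentially the same route as the paper's: where the paper computes the quotient $\tilde{S}_E/S_E$ of id\`ele subgroups (with $\tilde{S}_E$ the group attached to $K'(E)$) via $s \mapsto e\overline{e}$, you compute the canonically isomorphic quotient $\Cl(E)^G/H_{K3}$ via $[\mathfrak{a}] \mapsto N(\gamma)$ with $\mathfrak{a}\overline{\mathfrak{a}}^{-1}=(\gamma)$ --- the same homomorphism read downstairs in the class group rather than upstairs in $\A^{\times}_{E,f}$ --- and both surjectivity arguments invoke Hilbert 90 for ideals in the same way. The one step you flag as delicate, the identity $N(\mathcal{O}_E^{\times})=\mathcal{O}_F^{\times 2}$, is precisely the point where the paper argues that units of $E$ are roots of unity times units of $F$; be aware that this holds only when the Hasse unit index $[\mathcal{O}_E^{\times} : \mu(E)\mathcal{O}_F^{\times}]$ equals $1$ (it fails, e.g., for $E=\Q(\zeta_{15})$), so in general both your argument and the paper's actually produce the quotient $\bigl(\mathcal{O}^{\times}_{F} \cap N(E^{\times})\bigr)/N(\mathcal{O}_E^{\times})$, which may differ from the stated group by a factor of $2$.
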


\begin{proof}

Indeed, consider the group 
$$\tilde{S}_{E} = \lbrace s \in \A^{\times}_{E,f} \colon \exists e\in E^{\times} \colon e\frac{s}{\bar{s}} \mathcal{O}_{E} =  \mathcal{O}_{E} \rbrace$$
Clearly, $S_{E} \subset \tilde{S}_{E}$. To understand the quotient $\tilde{S}_{E} / S_{E}$, let $s \in \tilde{S}_{E}$ and consider $e \in E^\times$ such that $e \frac{s}{\bar{s}} \mathcal{O}_{E} =  \mathcal{O}_{E} $. We must have $(e \bar{e}) = \mathcal{O}_{E}$, i.e. $e \bar{e} \in \mathcal{O}^{\times}_{F} \cap \mathrm{N}(E^{\times})$. If $e' \in E^{\times}$ is another element such that $e'\frac{s}{\bar{s}} \mathcal{O}_{E} =  \mathcal{O}_{E} $, then $e'$ and $e$ differ by a unit, $e' = e u$ with $u \in \mathcal{O}_{E}^\times$, and $e' \overline{e'} = u \overline{u} e \bar{e}$. We have thus a well-defined map 

\begin{align} \label{map to U plus}
f \colon \tilde{S}_{E} &\to \frac{\mathcal{O}^{\times}_{F} \cap \mathrm{N}(E^{\times}) }{ \mathrm{N}(\Oo^{\times}_{E})} \\
s &\mapsto e \bar{e}. \nonumber
\end{align}

Note that $ \frac{\mathcal{O}^{\times}_{F} \cap \mathrm{N}(E^{\times}) }{ \mathrm{N}(\Oo^{\times}_{E})} $ is a finite $2-$torsion abelian group. Hence it is isomorphic to $(\Z / 2 \Z)^{n}$ for some $n \in \mathbb{N}$. The map $f$ is surjective: let $x \in \mathcal{O}^{\times}_{F} \cap \mathrm{N}(E^{\times}) $ and write $x = y \overline{y}$ with $y \in E^{\times}$. By Hilbert's theorem 90 for ideals (see \cite{classicalinvitationtonumbertheory}, p. 284) we can find a fractional ideal $I$ such that $I / \bar{I} = (y)$ (we take the freedom to write $I / \bar{I} $ for $I \cdot \bar{I}^{-1}$). Then for any $s \in \A^{\times}_{E,f}$ with $s\Oo_E = I$ one has $f(s) = x$. \\
\textbf{Claim:} the kernel of the map \eqref{map to U plus} is $S_{E}$. \\
Indeed, $s \in \tilde{S}_{E} $ is in the kernel if and only if there exists $e \in E^{\times}$ such that $e\frac{s}{\bar{s}} \mathcal{O}_{E} = \mathcal{O}_{E}$ and $e \bar{e} = u \overline{u}$ for some $u \in \mathcal{O}^\times_{E}$. But consider now $e'   \coloneqq \frac{e}{u}$, then clearly also $e'\frac{s}{\bar{s}} \mathcal{O}_{E} = \mathcal{O}_{E}$, and moreover $e' \overline{e'} =1$, i.e. $s \in S_{E}$. The next step, and final one, is to understand to which abelian extension the group $\tilde{S}_{E}$ is associated. Consider the natural projection maps $$ \A_{E,f}^\times \twoheadrightarrow \Cl(E) \twoheadrightarrow \Cl'(E).$$
\textbf{Claim:} the kernel of the above composition is  $\tilde{S}_{E}$. Indeed, $s \in \A^{\times}_{E} $ lies in the kernel if and only if the fractional ideals associated to $s$ and $\bar{s}$ are the same in the class group of $E$, i.e. if and only if exist $e \in E^\times$ such that $e\frac{s}{\bar{s}} \mathcal{O}_{E} = \mathcal{O}_{E}$. This completes the proof.
\end{proof}
In particular
\begin{equation} \label{divisibility relations}
| G_{K3}(E) | =  [ \mathcal{O}^{\times}_{F} \cap \mathrm{N}(E^{\times}) \colon \mathrm{N}(\Oo^{\times}_{E})] \cdot  | \Cl'(E)|.
\end{equation}

\begin{rmk}
If $E$ is imaginary quadratic, then $$\frac{\mathcal{O}^{\times}_{F} \cap \mathrm{N}(E^{\times}) }{ \mathrm{N}(\Oo^{\times}_{E})} = 1$$
\end{rmk}
To generalize the constructions above, one fixes an ideal $I \subseteq \mathcal{O}_{E}$ with prime decomposition $$I = \prod_i^k \mathfrak{p}_i^{n_i}$$ and denote by 

$$  \tilde{\U}_I   \coloneqq \{u \in  \U_{E}(\A_{f}) \colon u \in \widehat{\Oo}_E^\times \,\, \text{and} \,\, u_{\mathfrak{p}_i} \in 1 + \mathfrak{p}_i^{n_i} \Oo_{E,\mathfrak{p}_i}  \,\, \text{for each} \,\, i =1, \cdots, k \} .$$
\begin{defi} \label{K3 class field and group}
The K3 class group modulo $I$ is the double quotient 
$$G_{K3, I}(E)    \coloneqq \U_{E}(\Q) \backslash \U_{E}(\A_{f}) / \tilde{\U}_I,$$
and the K3 class field moduli $I$ is the abelian extension $F_{K3,I}(E)$ of $E$ associated to the surjection 
\begin{align*}
\A_{E,f}^\times &\twoheadrightarrow G_{K3,I}(E) \\
s &\mapsto \frac{s}{\bar{s}}.
\end{align*}
\end{defi}
Note that if we put $J   \coloneqq I \cap \overline{I}$, then $$ G_{K3,I}(E) =  G_{K3,J}(E) =  G_{K3,\overline{I}}(E)$$ directly from the definition. So that, without loss of generality, we can assume that $I$ is invariant. The following is the analogue of the previous proposition. 
\begin{prop} \label{better understanding 2}
Denote by $K_I(E)$ the ray class field of $E$ modulo $I$ and by $K_I'(E)$ the subextension of $K_I(E)$ with Galois group $\cong \Cl'_I(E)$. There is a diagram of abelian extensions 

\begin{center}
\begin{tikzcd}[every arrow/.append style=dash]

& K_I(E) 
 \arrow{dd} & \\
 F_{K3,I}(E) 
  \arrow{dr} & \\
& K_I'(E)
 \arrow{d}\\
& E 

\end{tikzcd}

\end{center}
with $$\Gal( F_{K3,I}(E) / K_I'(E) \cong \frac{\Oo^{\times}_F \cap \mathrm{N}(E^{I,1})}{\mathrm{N}(\Oo^{I}_E)}.$$
\end{prop}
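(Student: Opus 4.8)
The plan is to imitate the proof of Proposition \ref{better understanding}, replacing everything attached to the Hilbert class field by its ray-class analogue modulo $I$: $\Cl(E),\Cl'(E),E^{\times},\Oo_E^{\times}$ become $\Cl_I(E),\Cl_I'(E),E^{I,1},\Oo_E^{I}$. Write $E/F$ with $\Gal(E/F)=\langle\sigma\rangle$, $\sigma$ complex conjugation, and recall that $I$ is ambiguous, so $\sigma$ stabilises $\Cl_I(E)$, $E^{I,1}$, and the group of unit id\`eles congruent to $1$ modulo $I$. First I would introduce two subgroups of $\A_{E,f}^{\times}$: the kernel $S_{E,I}$ of the surjection $\A_{E,f}^{\times}\twoheadrightarrow G_{K3,I}(E)$, $s\mapsto s/\overline{s}$ (so that $F_{K3,I}(E)$ is the class field attached to $S_{E,I}$), and $\tilde{S}_{E,I}$, the subgroup cutting out $K_I'(E)$, which by class field theory is the kernel of $\A_{E,f}^{\times}\twoheadrightarrow\Cl_I(E)\twoheadrightarrow\Cl_I'(E)=\Cl_I(E)/\Cl_I(E)^{G}$. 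Exactly as in Proposition \ref{better understanding} (using that the projection $\pi\colon\A_{E,f}^{\times}\twoheadrightarrow\Cl_I(E)$ is $\sigma$-equivariant), one identifies $s\in\tilde{S}_{E,I}$ with the condition $\pi(s/\overline{s})=0$, i.e. with the existence of $e\in E^{\times}$ such that $e\,(s/\overline{s})$ is a unit id\`ele congruent to $1$ modulo $I$; call such an $e$ a \emph{witness}.

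Next I would establish the inclusions $H_I\subseteq S_{E,I}\subseteq\tilde{S}_{E,I}$, where $H_I=\ker\pi$ is the subgroup cutting out $K_I(E)$. Since $E$ is CM and $I$ is ambiguous, $\overline{e}/e$ has norm $1$ for $e\in E^{\times}$ and $u/\overline{u}\in\tilde{\U}_I$ for every unit id\`ele $u$ congruent to $1$ modulo $I$; this gives $H_I\subseteq S_{E,I}$, while $\U_E(\Q)\tilde{\U}_I$ being contained in $E^{\times}\cdot(\text{such }u\text{'s})$ gives $S_{E,I}\subseteq\tilde{S}_{E,I}$. By class field theory this produces the tower $K_I(E)\supseteq F_{K3,I}(E)\supseteq K_I'(E)$ and reduces the proposition to the computation $\tilde{S}_{E,I}/S_{E,I}\cong\frac{\Oo_F^{\times}\cap N(E^{I,1})}{N(\Oo_E^{I})}$.

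For that I would mimic the map $f$ of Proposition \ref{better understanding}. If $e$ is a witness for $s\in\tilde{S}_{E,I}$ then $(e)=(\overline{s}/s)\Oo_E$, so $N(e)=e\overline{e}$ generates the unit ideal and lies in $\Oo_F^{\times}$. Two witnesses for a given $s$ differ by an element of $\Oo_E^{\times}\cap E^{I,1}=\Oo_E^{I}$, and weak approximation lets one translate the representative $s$ by an element of $E^{\times}$ — which multiplies the witness by an element of norm $1$, hence leaves $N(e)$ unchanged — so as to arrange $e\in E^{I,1}$; thus $N(e)\in\Oo_F^{\times}\cap N(E^{I,1})$ and we obtain a homomorphism $f\colon\tilde{S}_{E,I}/S_{E,I}\to\frac{\Oo_F^{\times}\cap N(E^{I,1})}{N(\Oo_E^{I})}$. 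Its kernel is trivial: $N(e)=N(u)$ with $u\in\Oo_E^{I}$ forces $e/u$ to have norm $1$ and $(e/u)\,(s/\overline{s})\in\tilde{\U}_I$, i.e. $s\in S_{E,I}$; conversely a witness for $s\in S_{E,I}$ can be chosen in $\U_E(\Q)$, of trivial norm. For surjectivity, given $x=N(y)$ with $y\in E^{I,1}$, the ideal $(y)\Oo_E$ is coprime to $I$ and satisfies $\sigma\big((y)\Oo_E\big)=\big((y)\Oo_E\big)^{-1}$; Hilbert 90 for ideals (as in the proof of Proposition \ref{better understanding}), followed by a correction by a suitable ambiguous fractional ideal to preserve coprimality with $I$, yields a fractional ideal $\mathfrak{b}$ coprime to $I$ with $\mathfrak{b}/\sigma(\mathfrak{b})=(y)\Oo_E$, and any id\`ele $s$ with $s\Oo_E=\mathfrak{b}$ and $s_v=1$ for $v\mid I$ then has witness $y^{-1}$, so $f(s)=N(y)^{-1}=x^{-1}$. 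Class field theory finally gives $\Gal\big(F_{K3,I}(E)/K_I'(E)\big)\cong\tilde{S}_{E,I}/S_{E,I}\cong\frac{\Oo_F^{\times}\cap N(E^{I,1})}{N(\Oo_E^{I})}$.

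The main obstacle is the congruence bookkeeping, which has no counterpart in Proposition \ref{better understanding}: identifying precisely the id\`elic subgroups cutting out $K_I(E)$ and $K_I'(E)$, invoking the ambiguity of $I$ at each step so that all groups in sight are $\sigma$-stable, and — the genuinely delicate point — running the Hilbert-90 argument while keeping ideals coprime to $I$ and using weak approximation to move witnesses into $E^{I,1}$, which is what pins the image of $f$ down to the subgroup involving $N(E^{I,1})$ rather than the larger $N(E^{\times})$. In contrast to Proposition \ref{better understanding}, there is no ``roots of unity'' simplification available for $N(\Oo_E^{I})$, since $\Oo_E^{I}$ generally contains units that are not roots of unity; this is why the denominator is stated as $N(\Oo_E^{I})$.
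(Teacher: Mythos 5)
Your argument is correct and takes essentially the same route as the paper: the same two id\`elic subgroups $S_I \subseteq \tilde{S}_I$ of $\A_{E,f}^{\times}$, the same norm map onto $\bigl(\Oo^{\times}_F \cap N(E^{I,1})\bigr)/N(\Oo^{I}_E)$ with Hilbert 90 for ideals plus an ambiguous-ideal correction for surjectivity, and the same identification of $\tilde{S}_I$ with the kernel of $\A_{E,f}^{\times} \twoheadrightarrow \Cl'_I(E)$ cutting out $K'_I(E)$. The one step you leave as a sketch (adjusting the Hilbert-90 ideal to be coprime to $I$) is exactly the claim the paper proves in detail, while your weak-approximation adjustment of witnesses into $E^{I,1}$ fills in a point the paper leaves implicit.
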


\begin{proof}
As before, we begin by studying the kernel of the map $\A_{E,f}^\times \twoheadrightarrow G_{K3,I}(E).$ We denote it by $S_{I}$, so that $$S_{I} = \big\{ s\in \A^{\times}_{E}: \exists u \in \U_{E}(\Q) \colon \frac{s}{\bar{s}} u \mathcal{O}_{E} = \mathcal{O}_{E}, \,  u\frac{s}{\bar{s}} \equiv 1 \mod I \big\}.$$
Denote by $\tilde{S}_{I}$ the group $$\tilde{S}_{I} =\big\{ s\in \A^{\times}_{E} : \exists e \in E^{\times} : \frac{s}{\bar{s}} e \mathcal{O}_{E} = \mathcal{O}_{E}, \,  e\frac{s}{\bar{s}} \equiv 1 \mod I \big\}.$$
We again have an injection 
\begin{equation} \label{equationF}
\tilde{S}_{I} / S_{I} \hookrightarrow \frac{\Oo^{\times}_F \cap \mathrm{N}(E^{I,1})}{\mathrm{N}(\Oo^{I}_E)},
\end{equation}
and we need to prove surjectivity. As in the proof of Proposition \ref{better understanding}, let $x \in \Oo^{\times}_F \cap \mathrm{N}(E^{I,1})$ and let $y \in E^{I,1}$ be such that $y \overline{y} = x$ and find a fractional ideal $J$ of $E$ such that $J /  \bar{J} = (y)$. We need $J$ to be in $\mathcal{I}^I_E$ in order to conclude, so suppose it is not. \\ 
\textbf{Claim:} there exists an invariant fractional ideal $\mathfrak{a}$ such that $ \mathfrak{a} | J$ and $J / \mathfrak{a}$ is coprime to $I$. Indeed, let $\mathfrak{p}$ be a prime ideal of $E$, suppose that $v_\mathfrak{p}(\gcd(I,J)) \neq 0$ and let $n$ be the power of $\mathfrak{p}$ appearing in the factorization of $J$. If $\overline{\mathfrak{p}} = \mathfrak{p}$, then the ideal $J' = J / \mathfrak{p}^n$ has still the property that we need, i.e. $J' / \overline{J'} = (y)$, and $J'$ has no $\mathfrak{p}-$factor in common with $I$. If $\mathfrak{p} \neq \overline{\mathfrak{p}}$, write again $J' = J / \mathfrak{p}^n$ and consider $$(y) = J / \bar{J} =(J' /  \overline{J'}) (\mathfrak{p}^n / \overline{\mathfrak{p}}^n).$$
Since by construction $(y)$ is coprime to $I$ and $I$ is invariant, we must have that $\overline{\mathfrak{p}}$ divides $J'$ exactly with the same exponent $n$, hence $J'' = J / (\mathfrak{p} \overline{\mathfrak{p}})^n$ is still such that $(y) = J'' / \overline{J''}$ and has neither $\mathfrak{p}$ nor $\overline{\mathfrak{p}}$ factors in common with $I$. Doing this for every prime such that $v_\mathfrak{p}(\gcd(I,J)) \neq 0$, we find an ideal $J$ coprime to $I$ with $J / \bar{J} = (y)$. Therefore, the claim follows and \eqref{equationF} is surjective. Exactly as before, we recover $\tilde{S}_I$ as the kernel of the natural projection  
$$ \A^{\times}_{E,f} \twoheadrightarrow  \Cl'_I(E),$$
and this concludes the proof. 
\end{proof}
In particular 
\begin{equation} \label{divisibility relations 2}
 |G_{K3,I}(E)|  =    [\Oo^{\times}_F \cap \mathrm{N}(E^{I,1}) : \mathrm{N}(\Oo^{I}_E)] \cdot  | \Cl'_I(E)|.
\end{equation}
\begin{rmk} \label{quadratic imaginary remark}
When $E$ is imaginary quadratic there are equalities $ F_{K3,I}(E) =  K_I'(E)$ and $  G_{K3,I}(E) = \Cl'_I(E)$.
\end{rmk}

\section{Invariant ideals and K3 class group} \label{section cardinality K3 class groups}

In this section we continue to study the groups $G_{K3,I}(E)$, in particular we compute their cardinality. Most of the results in this section works for every cyclic extension $E/F$, so we rather work in this generality, the proofs being the same. By Theorem \ref{better understanding 2}, we know that $$ | G_{K3,I}(E) | = \frac{|\Cl_I(E)|}{|\Cl_I(E)^G|} \cdot [ \Oo^{\times}_F \cap \mathrm{N}(E^{I,1}) \colon \mathrm{N}(\Oo^{I}_E) ]. $$
When $I=\Oo_E$ we have the following (see Lemma 4.1 of \cite{langcyclo})

\begin{lemma}
Let $E / F$ be a cyclic extension with Galois group $G$. Then 
$$ | \Cl(E)^G | = \frac{h_F \cdot e(E/F)}{[E : F] \cdot [ \Oo^{\times}_F \colon \mathrm{N}(E^\times) \cap \Oo^{\times}_F ]}, $$
where $h_F$ is the class number of $F$ and $$e(E/F)   \coloneqq \prod_v e(v),$$
the product of all the ramification indices over all the places of $F$, both finite and infinite. 
\end{lemma}
Putting this together with Theorem \ref{better understanding 2} leads to 
$$  |G_{K3}(E)| = 2 \cdot \frac{h_E \cdot [\Oo^{\times}_F : \mathrm{N}(\Oo^{\times}_E)]}{h_F \cdot e(E/F)}.$$ 
Using basically the same proof of \cite{langcyclo}, we compute now the cardinalities $| \Cl_I(E)^G |$, where $I$ is any invariant ideal. 
We are going to use the notation introduced at the beginning of the last section. Moreover, for a $G-$ module $M$ we will denote by $$\mathrm{H}^i(M)   \coloneqq \hat{H}^i(G,M),$$
the $i-$th Tate cohomology group and by $Q(M)$ its Herbrand quotient
(when defined). We remind the reader that since $G$ is cyclic the Tate cohomology groups satisfy $\mathrm{H}^{\bullet} \cong \mathrm{H}^{\bullet + 2}.$ 
\begin{lemma} \label{Q}
Let $I$ be an invariant ideal, then 
$$ Q(\Oo_E^I) = Q(\Oo^{\times}_E) = \frac{1}{[E \colon F]} e_{\infty}(E/F), $$
with $$e_{\infty}(E/F) = \prod_{v | \infty} e(v),$$ where the product ranges over all the archimedean valuations of $F$. 
\end{lemma}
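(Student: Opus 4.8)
The plan is to compute the Herbrand quotient $Q(\Oo_E^I)$ and $Q(\Oo_E^\times)$ separately and show they agree. The key input is that $I$ is ambiguous, so the subgroup $E^{I,1} \subset E^\times$ is $G$-stable, and hence $\Oo_E^I = \Oo_E^\times \cap E^{I,1}$ is a $G$-submodule of $\Oo_E^\times$ of finite index. The first step is therefore to observe that, since the Herbrand quotient is multiplicative in short exact sequences and vanishes on finite modules, we get $Q(\Oo_E^I) = Q(\Oo_E^\times)$ as soon as we know the quotient $\Oo_E^\times / \Oo_E^I$ is finite; but this quotient injects into $(\Oo_E/I)^\times$, so it is indeed finite. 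This reduces everything to computing $Q(\Oo_E^\times)$.

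Next I would compute $Q(\Oo_E^\times)$ by the classical Minkowski-unit-lattice argument, which is exactly the computation underlying Lang's Lemma 4.1 in \cite{langcyclo} (and in Lang's \emph{Cyclotomic Fields}). Embed $\Oo_E^\times$ into the logarithmic space $\prod_{v | \infty} \R$ via the $\log|\cdot|_v$ maps; up to a finite kernel (the roots of unity $\mu(E)$, on which $Q$ is trivial since finite) and a finite-index sublattice (again killed by $Q$), $\Oo_E^\times$ becomes $G$-isomorphic to the trace-zero hyperplane $M_0$ inside $M := \bigoplus_{v | \infty} \Z[G/G_v]$, where $G_v$ is the decomposition group of an archimedean place $v$ of $E$ above a given place of $F$. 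One then has the short exact sequence $0 \to M_0 \to M \to \Z \to 0$ (trivial $G$-action on $\Z$), so $Q(M_0) = Q(M)/Q(\Z) = Q(M)$ since $Q(\Z) = |G|^0 \cdot$ (something)$\,=\,[E:F]^{?}$ — more precisely $Q(\Z) = 1/|G| \cdot |G| $, so I should be careful: for the trivial module $\Z$ one has $\hat H^0(G,\Z) = \Z/|G|\Z$ and $\hat H^{-1}(G,\Z) = 0$, giving $Q(\Z) = |G| = [E:F]$. And $Q(M) = \prod_{v|\infty} Q(\Z[G/G_v]) = \prod_{v|\infty} |G_v|$, since $\Z[G/G_v]$ is an induced module with $\hat H^0 = \Z/|G_v|\Z$ and $\hat H^{-1} = 0$. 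Here the product over archimedean $v$ of $E$ above a fixed place of $F$ groups into $e(v)$ copies, giving $\prod_{v|\infty}|G_v| = \prod_{w | \infty \text{ of } F} e(w) \cdot (\text{stuff})$; reconciling the bookkeeping yields $Q(M) = e_\infty(E/F)$ and hence $Q(\Oo_E^\times) = Q(M_0) = Q(M)/Q(\Z) = e_\infty(E/F)/[E:F]$, which is the claim.

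The main obstacle I anticipate is the bookkeeping of the archimedean places: keeping straight the distinction between places $w$ of $F$, places $v$ of $E$ above them, their decomposition groups $G_v$, and the ramification indices $e(v)$ (which for archimedean places means $e(v) = 2$ when a real place of $F$ becomes complex in $E$ and $e(v)=1$ otherwise), so that the product $\prod_{v|\infty}|G_v|$ correctly collapses to $e_\infty(E/F)$ after dividing by $[E:F]$. In our intended application $E$ is CM and $F$ totally real, so \emph{every} archimedean place ramifies, $G_v$ has order $2$ for each of the $[F:\Q]$ complex places $v$ of $E$, and $e_\infty(E/F) = 2^{[F:\Q]}$; this special case is a useful sanity check on the signs and exponents. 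Once the archimedean computation is pinned down, the rest is the formal multiplicativity of $Q$ together with the two finiteness observations from the first paragraph, neither of which presents any difficulty.
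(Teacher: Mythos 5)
Your argument is correct and follows the paper's route exactly: the first equality is precisely the paper's observation that $\Oo^{\times}_E/\Oo_E^I$ is finite (your injection into $(\Oo_E/I)^{\times}$ makes this explicit), and for the second equality the paper simply cites Corollary 2 of Theorem 1, Chapter IX of Lang's \emph{Algebraic Number Theory}, which is exactly the unit-lattice computation you write out. The one imprecision is your claim that, after killing the roots of unity, $\log(\Oo_E^{\times})$ is a finite-index sublattice of the trace-zero lattice $M_0$: it is merely a second $G$-stable full lattice in the same real $G$-representation (its entries are real logarithms, not integers), so one needs the standard lemma --- also in Lang, loc.\ cit.\ --- that two $G$-lattices spanning isomorphic rational (or real) representations have equal Herbrand quotients; with that substitution, and with the induced modules indexed by the archimedean places of $F$ rather than of $E$, the bookkeeping closes and gives $Q(M_0)=Q(M)/Q(\Z)=e_{\infty}(E/F)/[E\colon F]$ as required, consistent with your CM sanity check $e_{\infty}(E/F)=2^{[F\colon\Q]}$.
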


\begin{proof}
The equality $Q(\Oo_E^I) = Q(\Oo^{\times}_E)$ descends from the fact that $\Oo^{\times}_E / \Oo_E^I$ is a finite group. The second equality of the statement follows from Corollary 2, Theorem 1, Chapter IX of \cite{langANT}.  

\end{proof}

\begin{thm} \label{invariantformula}
Let $I \subset \Oo_E$ be an invariant ideal and denote by $J   \coloneqq I \cap \Oo_{F}$. Then
$$| \Cl_{I}(E)^G | =  \frac{h_J(F) \cdot e(E/F,J)  \cdot | \mathrm{H}^1(E^{I,1}) | }{ [E \colon F] [ \Oo^J_F \colon \mathrm{N}(E^{I,1}) \cap \Oo_{F}^{\times}  ] } $$
where $$ e(E/F, J) = \prod_{v \nmid J} e(v).$$ 
\end{thm}

\begin{proof}
Consider the short exact sequence defining the ray class group 
$$0 \rightarrow \mathcal{P}^I_{E} \rightarrow \mathcal{I}^I_E \rightarrow \Cl_I(E) \rightarrow 0.$$
Taking $G-$invariants we obtain 
$$0 \rightarrow \mathcal{P}^{I,G}_{E} \rightarrow \mathcal{I}^{I,G}_E \rightarrow \Cl_I(E)^G \rightarrow \mathrm{H}^1(\mathcal{P}^{I}_{E}) \rightarrow 0,$$
since $\mathrm{H}^1(\mathcal{I}^I_E) = 0$
and therefore
\begin{equation} \label{eq1}
| \Cl_{I}(E)^G | = [\mathcal{I}^{I,G}_E \colon \mathcal{P}^{I,G}_{E}] \cdot |\mathrm{H}^1(\mathcal{P}^{I}_{E})|.
\end{equation}
Now, we compute the two numbers on the right-hand side. For the first one, write 
\begin{equation}\label{eq2}
 [\mathcal{I}^{I,G}_E \colon \mathcal{P}^{I,G}_{E}] = \frac{ [\mathcal{I}^{I,G}_E \colon \mathcal{P}^{J}_{F}]}{[\mathcal{P}^{I,G}_{E} \colon \mathcal{P}^{J}_{F}]} = \frac{ [\mathcal{I}^{I,G}_E \colon \mathcal{I}^J_F] \cdot [ \mathcal{I}^J_F \colon \mathcal{P}^{J}_{F}] }{[\mathcal{P}^{I,G}_{E} \colon \mathcal{P}^{J}_{F}]} = \frac{e(E/F, \infty\cdot J) \cdot h_{J}(F)}{{[\mathcal{P}^{I,G}_{E} \colon \mathcal{P}^{J}_{F}]}}.
\end{equation}
In order to find $[\mathcal{P}^{I,G}_{E} \colon \mathcal{P}^{J}_{F}],$ one takes the $G-$invariants of the next exact sequence 
$$0 \rightarrow \Oo_E^I \rightarrow E^{I,1} \rightarrow \mathcal{P}^{I}_{E} \rightarrow 0,$$
to obtain  
$$0 \rightarrow \Oo_F^J \rightarrow F^{J,1} \rightarrow \mathcal{P}^{I,G}_{E} \rightarrow \mathrm{H}^1(\Oo_E^I) \rightarrow \mathrm{H}^1( E^{I,1} ).$$
Thus, if $H \subset \mathrm{H}^1( E^{I,1} )$ denotes the image of the last map, one has
$$[\mathcal{P}^{I,G}_{E} \colon \mathcal{P}^{J}_{F}] = \frac{|\mathrm{H}^1(\Oo_E^I)|}{|H|} = \frac{|\mathrm{H}^0(\Oo_E^I)|}{|H| \cdot Q(\Oo_E^I)}. $$
By Lemma \ref{Q} we know the value of $Q(\Oo_E^I)$, and by definition 
$$|\mathrm{H}^0(\Oo_E^I)| = [\Oo_F^J \colon \mathrm{N}(\Oo_E^I) ],$$
so we conclude that
\begin{equation} \label{eq3} 
[\mathcal{P}^{I,G}_{E} \colon \mathcal{P}^{J}_{F}] =  \frac{|\mathrm{H}^1(\Oo_E^I)|}{|H|} = \frac{[\Oo_F^J \colon \mathrm{N}(\Oo_E^I) ]}{|H| \cdot Q(\Oo_E^I)}. 
\end{equation}
Concerning the second number $ |\mathrm{H}^1(\mathcal{P}^{I}_{E})|$ we use the exact sequence 
$$ 0 \rightarrow \mathrm{H}^1(E^{I,1}) / H \rightarrow \mathrm{H}^1( \mathcal{P}^{I}_{E} ) \rightarrow \mathrm{H}^0( \Oo_E^I ) \rightarrow \mathrm{H}^0( E^{I,1} ),$$
to see that 
\begin{equation} \label{eq4}
| \mathrm{H}^1( \mathcal{P}^{I}_{E} )| = \frac{ |\mathrm{H}^1(E^{I,1})|}{| H |} \cdot | \ker( \mathrm{H}^0( \Oo_E^I ) \rightarrow \mathrm{H}^0( E^{I,1} )  )|.
\end{equation}
(Note how the term $|H|$ above and the one in \eqref{eq3} will cancel each other in the final formula). Now, 
$$\ker( \mathrm{H}^0( \Oo_E^I ) \rightarrow \mathrm{H}^0( E^{I,1} )) \cong (\mathrm{N}(E^{I,1}) \cap \Oo^{\times}_F) / \mathrm{N}(\Oo^{I}_E).$$
Using the inclusions $$\mathrm{N}(\Oo^{I}_E) \subset \mathrm{N}(E^{I,1}) \cap \Oo^{\times}_F \subset \Oo^J_F$$ and putting equations \eqref{eq1}, \eqref{eq2}, \eqref{eq3} and \eqref{eq4} together, we conclude. 
\end{proof}
Together with Theorem \ref{better understanding 2} this implies the following. 
\begin{cor} \label{superformula}
Let $E$ be a CM number field, $F$ its maximal, totally real subextension, and $I \subset \Oo_E$ an invariant ideal. Then 
$$ |G_{K3,I}(E)| = \frac{ 2 \cdot h_E \cdot \phi_E(I) \cdot [\Oo^{\times}_F \colon \mathrm{N}(\Oo^I_E)]}{h_F \cdot \phi_F(J) \cdot [\Oo^{\times}_E \colon \Oo^I_E] \cdot e(E/F, J) \cdot | \mathrm{H}^1(E^{I,1}) |}.$$
\end{cor}
\begin{proof}
This follows from Theorem \ref{invariantformula} and Theorem \ref{better understanding 2}, using the well-known fact $$h_{I}(E) = h_E \frac{\phi_E(I)}{[ \Oo^{\times}_E \colon \Oo^I_E ]}. $$
\end{proof}
The only mysterious term appearing in Theorem \ref{superformula} is $| \mathrm{H}^1(E^{I,1}) |.$ Note that this group is always $2-$torsion and finitely generated. We have the following partial result:
\begin{prop} \label{H1}
In the assumptions of Theorem \ref{superformula}
\begin{enumerate}
\item If $\gcd(2,I) = (1)$. Then $\mathrm{H}^1(E^{I,1}) = 0;$
\item Write $I= I_2 \cdot I'$ with $I'+ (2)=(1)$, and likewise put $J = J_2 \cdot J'$. There is a natural left exact sequence $$1 \rightarrow \frac{ (\Oo_E / I_2)^{\times, G}}{ (\Oo_F / J_2)^{\times}} \rightarrow \mathrm{H}^1(E^{I,1}) \rightarrow \bigoplus_{q \in S(J_2)} \Z / e(q) \Z. $$ 
\item If every prime ideal dividing $J_2$ does not ramify in $E$, then  $\mathrm{H}^1(E^{I,1}) = 0.$
\end{enumerate}

\end{prop}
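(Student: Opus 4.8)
Since $G_{K3,I}(E)=G_{K3,\lcm(I,\overline I)}(E)$, I keep the assumption that $I$ is ambiguous and write $G=\Gal(E/F)\cong\Z/2\Z$, so that $H^i(-)=\hat H^i(G,-)$ throughout. The plan is to deduce everything from part (2): part (1) is its special case $I_2=J_2=(1)$, where both outer terms are trivial; and part (3) follows because the hypothesis that every prime of $F$ dividing $J_2$ is unramified in $E$ makes $\bigoplus_{q\in S(J_2)}\Z/e(q)\Z=0$ and, at each prime $\mathfrak p\mid I_2$, turns $\Oo_E/I_2$ into an \'etale $\Oo_F/J_2$-algebra with group $G$, whence $(\Oo_E/I_2)^{\times,G}=(\Oo_F/J_2)^\times$ by Galois descent and the left-hand term vanishes as well.

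For part (2), the idea is to play off the two short exact sequences of $G$-modules $1\to E^{I,1}\to E^{I}\to(\Oo_E/I)^\times\to1$ (reduction modulo $I$) and $1\to E^{I}\to E^\times\xrightarrow{\mathrm{div}}\bigoplus_{\mathfrak p\mid I}\Z\to1$, both surjections being weak approximation. Since $I$ is ambiguous, $\bigoplus_{\mathfrak p\mid I}\Z$ is a permutation $G$-module, so its $\hat H^1$ vanishes; combined with Hilbert~90 ($\hat H^1(G,E^\times)=0$), the second sequence gives $\hat H^1(E^I)\cong\coker\bigl(\hat H^0(E^\times)\to\hat H^0(\bigoplus_{\mathfrak p\mid I}\Z)\bigr)$. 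Here $\hat H^0$ of the permutation module is $\bigoplus_{q\mid J\text{ non-split}}\Z/2\Z$, and for $f\in F^\times$ one has $v_{\mathfrak p}(f)=e(\mathfrak p/q)\,v_q(f)$, which is even when $q$ ramifies and equals $v_q(f)$ when $q$ is inert; a short approximation argument then identifies $\hat H^1(E^I)\cong\bigoplus_{q\mid J,\ e(q)=2}\Z/2\Z$, the $q$-summand being, via the connecting map, the class of the cocycle $\sigma\mapsto\overline\pi/\pi$ for $\pi$ a uniformizer at the ramified prime over $q$.

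Feeding this into the long exact sequence of the first short exact sequence leaves $0\to\coker(\alpha)\to H^1(E^{I,1})\to\ker(\gamma)\to0$, where $\alpha\colon\hat H^0(E^I)\to\hat H^0((\Oo_E/I)^\times)$ and $\gamma\colon\hat H^1(E^I)\to\hat H^1((\Oo_E/I)^\times)$. To compute $\coker(\alpha)$ I would split $(\Oo_E/I)^\times=(\Oo_E/I_2)^\times\times(\Oo_E/I')^\times$ by the Chinese remainder theorem; at every prime over $J'$ the local extension is unramified or tamely ramified, and in all three cases (split, inert, tamely ramified of odd residue characteristic) the $G$-invariants $(\Oo_E/\mathfrak p^{v_{\mathfrak p}(I')})^{\times,G}$ are exactly the image of the relevant $(\Oo_F/q^\bullet)^\times$; since $\hat H^0(E^I)$ already receives the image of $(E^I)^G$, which surjects onto $(\Oo_F/J)^\times$, the $I'$-part of the target is entirely in the image of $\alpha$, while on the $I_2$-part the norms from local units of $\Oo_E$ give $\coker(\alpha)\cong(\Oo_E/I_2)^{\times,G}/(\Oo_F/J_2)^\times$. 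For $\ker(\gamma)\subseteq\bigoplus_{q\mid J,\,e(q)=2}\Z/2\Z$ it is enough to see that the summands with $q$ odd are not killed by $\gamma$: locally the tame computation gives $\overline\pi/\pi\equiv(-1)\cdot(\overline u/u)\bmod\mathfrak p^{v_{\mathfrak p}(I)}$ for a unit $u$, whereas the image of $1-\sigma$ on $(\Oo_E/\mathfrak p^{v_{\mathfrak p}(I)})^\times$ lies in $1+\mathfrak p$, and $-1\not\equiv1\bmod\mathfrak p$. Hence $\ker(\gamma)\subseteq\bigoplus_{q\in S(J_2)}\Z/e(q)\Z$, which is exactly the left-exact sequence of the statement.

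The main obstacle is the local bookkeeping at the primes above $2$, together with the matching tame analysis at the odd primes dividing $J'$: one must check carefully that $(\Oo_E/I')^{\times,G}$ is generated by the image of $F$ --- this is where the hypothesis $(I',2)=(1)$, i.e.\ tameness, is genuinely used --- and carry out the explicit cocycle computation showing that the odd-ramified classes survive in $\hat H^1((\Oo_E/I)^\times)$. At wildly ramified dyadic primes neither statement holds, which is exactly why those summands are only \emph{allowed} to occur, i.e.\ why the sequence in (2) is merely left exact rather than exact on the right. A minor point is to keep the two meanings of ``$J$'' consistent ($I\cap\Oo_F$ versus $\lcm(I,\overline I)$), which the reduction to ambiguous $I$ makes harmless.
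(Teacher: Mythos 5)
Your argument is correct, but for part (2) it follows a genuinely different route from the paper's. The paper first proves (1) by the one-line trick $x=y/\overline y$ with $y=(1+x)/2$ (legitimate precisely because $2$ is invertible mod $I$), and then proves (2) by comparing $E^{I,1}$ with $E^{I',1}$: the quotient $Q_I=E^{I',1}/E^{I,1}$ is computed by a snake lemma to be an extension of $\bigoplus_{p\in S(I_2)}\Z$ by $(\Oo_E/I_2)^\times$, part (1) applied to the odd ideal $I'$ kills $H^1(E^{I',1})$ and identifies $H^1(E^{I,1})\cong\coker(Q_J\to Q_I^G)$, and a second kernel--cokernel sequence yields the stated left-exact sequence; all odd primes are disposed of at the very first step, so no local analysis at tame or unramified places is ever needed. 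You instead work with the full sequences $1\to E^{I,1}\to E^I\to(\Oo_E/I)^\times\to1$ and $1\to E^I\to E^\times\to\bigoplus_{\mathfrak p\mid I}\Z\to1$, using Hilbert 90 and the vanishing of $\hat H^1$ of a permutation module to get $\hat H^1(E^I)\cong\bigoplus_{q\mid J,\,e(q)=2}\Z/2$, and then you must do the local bookkeeping you describe: the vanishing of $H^1(G,1+\mathfrak p^n)$ at unramified and odd tamely ramified primes (to show the $I'$-part of $\coker(\alpha)$ dies), and the explicit cocycle computation $\overline\pi/\pi\equiv-1\bmod\mathfrak p$ versus $\mathrm{im}(1-\sigma)\subset1+\mathfrak p$ (to show the odd ramified classes survive in $\hat H^1((\Oo_E/I)^\times)$). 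Both of these check out, and your route buys slightly more than the statement asks for — a short exact sequence $0\to\coker(\alpha)\to H^1(E^{I,1})\to\ker(\gamma)\to0$ with the right-hand term identified as a subgroup of $\bigoplus_{q\in S(J_2)}\Z/e(q)\Z$ — at the cost of more local case analysis. Two small remarks on logical structure: deducing (1) from (2) is valid only because your proof of (2) does not use (1), whereas the paper's does (applied to $I'$), so the two proofs cannot be mixed; and the paper's direct proof of (1) is worth keeping in any case, since it is a one-liner. Your treatment of (3) coincides with the paper's.
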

\begin{proof} \mbox{}
\begin{enumerate}
\item Let $x \in E^{I,1}$ be such that $x \overline{x} =1$. Then, if we put $y = 1/2 + x/2$, we also have $y \in E^{I,1}$ (since by assumptions $2$ and $I$ are coprime) and $y / \overline{y} =x$. 

\item We start by understanding the quotient $Q_I$ of 
\begin{equation} \label{sesQ}
1 \rightarrow E^{I,1} \rightarrow E^{I',1} \rightarrow Q_I \rightarrow 1.
\end{equation} 
In order to do this, consider the morphism of short exact sequences 
 \begin{center}
\begin{tikzcd} 
 1 \arrow[r] 
& E^{I,1} \arrow[r] \arrow[d] 
& E^{I} \arrow[r] \arrow[d] 
& (\Oo_E / I)^{\times} \arrow[r] \arrow[d]
& 1 \\
1 \arrow[r] 
& E^{I',1} \arrow[r] 
& E^{I'} \arrow[r] 
& (\Oo_E / I')^{\times} \arrow[r]
& 1,
\end{tikzcd}
\end{center}
The following sequence 
$$ 1 \rightarrow E^I \rightarrow E^{I'} \xrightarrow{\oplus v_\mathfrak{p}} \bigoplus_{\mathfrak{p} \in S(I_2)} \Z \rightarrow 0  $$
is exact, due to the theorem on the independence of valuations. 
Then via the snake lemma we obtain 
\begin{equation} \label{sesforQI}
 1 \rightarrow (\Oo_E / I_2)^{\times} \rightarrow Q_I \rightarrow \bigoplus_{\mathfrak{p} \in S(I_2)} \Z \rightarrow 0.
\end{equation}
We can do the same over $F$, obtaining analogous results: we have two exact sequences 
$$1 \rightarrow F^{J,1} \rightarrow F^{J',1} \rightarrow Q_J \rightarrow 1$$
and 
\begin{equation} \label{sesforQJ}
1 \rightarrow (\Oo_F / J_2)^{\times} \rightarrow Q_J \rightarrow \bigoplus_{q \in S(J_2)} \Z \rightarrow 0.
\end{equation}
Taking Galois invariants of \eqref{sesQ} and using the first point of this proposition, we obtain 
$$
1 \rightarrow F^{J,1} \rightarrow F^{J',1} \rightarrow Q_I^G \rightarrow \mathrm{H}^1(E^{I,1}) \rightarrow 1.
$$ Thus, we can identify 
\begin{equation} \label{identification}
\mathrm{H}^1(E^{I,1})  \cong \coker(Q_J \rightarrow Q_I^G ). 
\end{equation}
Applying the snake lemma again to the following diagram

 \begin{center}
\begin{tikzcd} 
 1 \arrow[r] 
& (\Oo_F / J_2)^{\times} \arrow[r] \arrow[d] 
& Q_J \arrow[r] \arrow[d] 
& \bigoplus_{q \in S(J_2)} \Z \arrow[r] \arrow[d]
& 0 \\
1 \arrow[r] 
& (\Oo_E / I_2)^{\times,G} \arrow[r] 
& Q^G_I \arrow[r] 
&\big( \bigoplus_{\mathfrak{p} \in S(I_2)} \Z \big)^G,
\end{tikzcd}
\end{center}
we obtain
\begin{equation} 
1 \rightarrow \frac{ (\Oo_E / I_2)^{\times, G}}{ (\Oo_F / J_2)^{\times}}  \rightarrow \mathrm{H}^1(E^{I,1}) \rightarrow \bigoplus_{q \in S(J_2)} \Z / e(q) \Z. 
\end{equation}
This concludes the proof of point 2.
\item Under these assumptions $$ \frac{ (\Oo_E / I_2)^{\times, G}}{ (\Oo_F / J_2)^{\times}}  \cong \mathrm{H}^1(E^{I,1}).$$ 
However, since the primes in $ S(J_2)$ do not ramify, $(\Oo_E / I_2)^{\times, G} =  (\Oo_F / J_2)^{\times}$.

\end{enumerate}

\end{proof}

\section{Fields of moduli and applications} 
In this section we compute the field of moduli of the tuple $(T(X), B, \iota)$. This should be interpreted as the field of moduli of the transcendental motive of $X$, together with the cycles in $E(X)$ and some additional Brauer classes. 
\begin{defi}
The field of moduli of $(T(X), B, \iota)$ is the fixed field of 
$$ \{ \sigma \in \Aut(\C / \Q) \colon \text{exists an isomorphism} \,\, (T(X), B, \iota) \cong (T(X^\sigma), \sigma_*(B), \sigma^{ad} \circ \iota) \},$$
where an isomorphism $ (T(X), B, \iota) \cong (T(X^\sigma), \sigma_*(B), \sigma^{ad} \circ \iota) $ is as in Definition \ref{iso k3 tuples}.
\end{defi}
\begin{rmk}\label{remark field of moduli}
Note that if we denote by $M$ the field of moduli of $(T(X), B, \iota)$, then we must have $E \subset M$ because of Lemma \ref{lemma:nuovo}, so that we can `work over $E$'. 
\end{rmk}
\begin{thm}[Field of moduli] \label{field of moduli}
Let $(X, B, \iota)$ be a principal CM $K3$ surface over $\C$ with level structure $B \subset \Br(X)$ and let $E \subset \C$ be its reflex field. Suppose that $(T(X), B, \iota)$ is of type $(I, \alpha, J, \sigma)$ and put $I_B   \coloneqq I^\vee J^{-1} \subset \Oo_E$. Then the field of moduli of $(T(X), B, \iota)$ corresponds to the K3 class field $F_{K3,I_B}(E)$ modulo the ideal $I_B$. Moreover, if $[E \colon \Q] \leq 10$, the field $F_{K3}(E)$ is equal to the field of moduli of $(X, \iota)$. 
\end{thm}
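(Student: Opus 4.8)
The plan is to translate the condition defining the field of moduli into a congruence condition on finite idèles, using Theorem \ref{MTSHIMURA}, and then to recognise the resulting subgroup of $\A^{\times}_{E,f}$ as the kernel $S_{I_{B}}$ of the surjection $\A^{\times}_{E,f}\to G_{K3,I_{B}}(E)$ defining $F_{K3,I_{B}}(E)$. By Remark \ref{remark field of moduli} the field of moduli contains $E$, so I would fix $\tau\in\Aut(\C/E)$ together with a finite idèle $s$ with $\art(s)=\tau_{|E^{ab}}$. By Theorem \ref{MTSHIMURA} the triple $(T(X^{\tau}),\tau_{*}B,\tau^{ad}\circ\iota)$ is of type $\frac{s}{\overline{s}}\cdot(I,\alpha,J,\sigma)=\big(\frac{s}{\overline{s}}I,\alpha,\frac{s}{\overline{s}}J,\sigma\big)$ — the slot $\alpha$ and the embedding $\sigma$ do not change, since $\frac{s}{\overline{s}}$ has norm $1$ and $\tau$ fixes the reflex field — and there is a distinguished realising map $\Phi_{X^{\tau}}$ with $\Phi_{X^{\tau}}^{*}\circ\tau_{*}=\frac{s}{\overline{s}}\circ\Phi_{X}^{*}$ as maps $\Br(X)\to E/\frac{s}{\overline{s}}I^{*}$.

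Since $f^{*}$ and $f_{*}$ are mutually inverse, the condition on $\tau$ appearing in the definition of the field of moduli — the existence of an isomorphism of triples (Definition \ref{iso k3 tuples}) $f\colon(T(X),B,\iota)\xrightarrow{\sim}(T(X^{\tau}),\tau_{*}B,\tau^{ad}\circ\iota)$ with $f^{*}\tau_{*}|_{B}=\id$ — is equivalent to asking for such an $f$ with $f_{*}|_{B}=\tau_{*}|_{B}$. By Proposition \ref{type thm}, any isomorphism of triples as above corresponds, via $\Phi_{X}$ and the distinguished $\Phi_{X^{\tau}}$, to multiplication by some $e\in E^{\times}$ with $e\overline{e}=1$ and $eI=\frac{s}{\overline{s}}I$, and it induces on Brauer groups the map with $\Phi_{X^{\tau}}^{*}\circ f_{*}=e\circ\Phi_{X}^{*}$. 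Comparing this with the relation for $\tau_{*}$ above and using injectivity of $\Phi_{X^{\tau}}^{*}$, the equality $f_{*}|_{B}=\tau_{*}|_{B}$ says exactly that multiplication by $e$ and by $\frac{s}{\overline{s}}$ agree on $\Phi_{X}^{*}(B)=J/I^{*}\subseteq E/I^{*}$. Setting $w:=e^{-1}\frac{s}{\overline{s}}$, a unit idèle because $eI=\frac{s}{\overline{s}}I$, a computation prime by prime rewrites this as $(w_{\mathfrak{p}}-1)J_{\mathfrak{p}}\subseteq I^{*}_{\mathfrak{p}}$ for every prime $\mathfrak{p}$, i.e. $w\equiv 1\bmod I_{B}$ where $I_{B}=I^{*}J^{-1}\subseteq\Oo_{E}$ (an integral ideal since $I^{*}\subseteq J$). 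Conversely, the "if" part of Proposition \ref{type thm}, applied to any $e\in E^{\times}$ with $e\overline{e}=1$, $eI=\frac{s}{\overline{s}}I$ and $e^{-1}\frac{s}{\overline{s}}\equiv 1\bmod I_{B}$, yields a suitable $f$, by realising the equivalence of types using the distinguished $\Phi_{X^{\tau}}$.

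Combining both directions, $\tau$ satisfies the condition defining the field of moduli if and only if there is $\epsilon\in\U_{E}(\Q)$ with $\epsilon\frac{s}{\overline{s}}\Oo_{E}=\Oo_{E}$ and $\epsilon\frac{s}{\overline{s}}\equiv 1\bmod I_{B}$ (take $\epsilon=e^{-1}$); this is precisely the condition $s\in S_{I_{B}}$ isolated in the proof of Proposition \ref{better understanding 2}. The subgroup $S_{I_{B}}\subseteq\A^{\times}_{E,f}$ is the kernel of the continuous surjection $\A^{\times}_{E,f}\to G_{K3,I_{B}}(E)$, $s\mapsto\frac{s}{\overline{s}}$, hence open, hence closed; as it contains $E^{\times}$ it contains $\overline{E^{\times}}=\ker\big(\art_{E}\colon\A^{\times}_{E,f}\to\Gal(E^{ab}/E)\big)$. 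Therefore the condition "$s\in S_{I_{B}}$" does not depend on the chosen lift $s$ and depends only on $\tau_{|E^{ab}}$, so the set of admissible $\tau$ equals $\{\tau\in\Aut(\C/E):\tau_{|E^{ab}}\in\art(S_{I_{B}})\}$, whose fixed field in $\C$ is the fixed field of $\art(S_{I_{B}})$ inside $E^{ab}$ — that is, $F_{K3,I_{B}}(E)$, by Definition \ref{K3 class field and group}.

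The one genuinely delicate point is the local computation identifying when multiplication by $e$ and by $\frac{s}{\overline{s}}$ agree on the subgroup $J/I^{*}$: one has to keep careful track of the idèle action on $E/I^{*}$ and of the effect of multiplication by idèles on dual lattices, so as to check that the obstruction is exactly the ideal $I^{*}J^{-1}$. Everything else is a formal assembly of Theorems \ref{MTCM} and \ref{MTSHIMURA}, Proposition \ref{type thm}, and global class field theory; in particular the openness of $S_{I_{B}}$ rests on the finiteness of $G_{K3,I_{B}}(E)$, which is the usual finiteness of (generalised) class numbers.
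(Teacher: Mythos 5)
Your proof is correct and follows essentially the same route as the paper: reduce to $\tau\in\Aut(\C/E)$ via Lemma \ref{lemma:nuovo}, use Theorem \ref{MTSHIMURA} together with Proposition \ref{type thm} to translate the moduli condition into the existence of $e\in\U_E(\Q)$ with $e\frac{s}{\overline{s}}\Oo_E=\Oo_E$ and $e\frac{s}{\overline{s}}\equiv 1\bmod I_B$, and recognise the resulting subgroup of $\A^{\times}_{E,f}$ as the kernel $S_{I_B}$ from the proof of Proposition \ref{better understanding 2}. Your write-up actually supplies two details the paper leaves implicit — the prime-by-prime verification that the obstruction ideal is exactly $I^*J^{-1}$, and the openness argument showing the condition depends only on $\tau|_{E^{ab}}$ — both of which are correct.
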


\begin{proof}
Thanks to the remark we need to compute the fixed field of
$$ \{ \sigma \in \Aut(\C / E) \colon \exists \,\, \text{Hodge isometry} \,\, f \colon T(X) \rightarrow T(X^\sigma)\colon f_{*} \circ \sigma^*|_{B} = \id \}.$$
Thanks to Proposition \ref{type thm} and Theorem \ref{MTSHIMURA}, an element  $\tau \in \Aut(\C / E) $ is in the above group if and only if we can find $s \in \A_{E,f}^{\times}$ and $e \in E^{\times}$ such that 
\begin{enumerate}
\item $\art(s) = \tau|_{E^{ab}}$;
\item $\frac{s}{\bar{s}} (I , \alpha , \sigma) \cong (I , \alpha , \sigma)$, i.e. $e \frac{s}{\bar{s}} I = I$ and $e \bar{e} = 1$;
\item The composition $E/I^\vee \xrightarrow{s / \bar{s}} E/\frac{s}{\bar{s}} I^\vee \xrightarrow{e} E/I^\vee$ restricts to the identity on $J/ I^\vee$. 
\end{enumerate}
Via class field theory, this corresponds to 
$$ \lbrace s \in \A^{\times}_{E,f} \colon \exists e\in E^{\times} \colon e \bar{e} = 1 \, , \, e\frac{s}{\bar{s}} \Oo_E = \Oo_E \, ,  e\frac{s}{\bar{s}} \equiv 1 \mod I_B \,  \rbrace, $$
and we recognize this group to be exactly the kernel of $\A^{\times}_{E,f} \twoheadrightarrow G_{K3,I_B}(E)$ (see Proposition \ref{better understanding 2}). The last assertion follows from the second remark after Proposition \ref{key proposition}. 
\end{proof}
The immediate corollary we get is 
\begin{cor} \label{baruerCM}
Let $X/K$ be a principal K3 surface with CM over $K$ and let $I \subset \Oo_E$ be the unique ideal such that $\Br(\overline{X})[I] = \Br(\overline{X})^{G_K}$. Then $$F_{K3,I}(E) \subset K.$$
\end{cor}

\subsection{Applications to Brauer groups} \label{section applications brauer}

One of the consequences of the finiteness result in \cite{MR3830546} is that \textit{a posteriori} for a fixed number field $K$, there are only finitely many groups that can appear as $\Br(\overline{X})^{G_K}$, where $X/K$ is any K3 surface with CM over $\overline{K}$. In this last section we show how our previous results can be applied to produce a computable bound for the Galois fixed part of Brauer groups of principal CM K3 surfaces. As mentioned in the introduction, there is an algorithm that from a number field $K$ and a CM field $E$ returns a finite set of groups $\Br(E,K)$ such that for every principal CM K3 surfaces $X/K$ with reflex field $E$ one has $$\Br(\overline{X})^{G_K} \in \Br(E,K).$$ It works as follows:
\begin{enumerate}
\item Replace $K$ by $KE$;
\item Find all the invariant ideals $I \subset \Oo_E $ such that 
$$| G_{K3,I}(E)| \divides [K \colon E].$$
This is possible thanks to Theorem \ref{superformula} and Proposition \ref{H1}, which also says that there are finitely many such ideals. Denote them $I_1, \cdots I_n.$ 
\item Now by Theorem \ref{baruerCM} one knows that
$$\Br(\overline{X_K})^{G_K} \cong \Oo_E / I_B,$$
with $I_B \subset \Oo_E$ an ideal amongst the $I_i$'s, hence, we have an inclusion (of isomorphism classes of $\Oo_E$-modules)
$$ \{ \Br(\overline{X})^{G_K} \colon X / K \,\, \text{has CM by} \,\,  \Oo_E \} \subset \{ \Oo_E / I_i \colon  \,\, \text{for} \,\, i=1, \cdots, n \},$$
and we define the latter set to be $\Br(K,E)$. 
\end{enumerate}
\begin{rmks}
\begin{itemize}
\item This strategy is the same one employed by Silverberg in \cite{MR3778184} for torsion points of CM abelian varieties. 
\item In particular, if we put $C  \coloneqq \max_i | \Oo_E/ I_i |$ we have the bound 
$$ | \Br(\overline{X})^{G_K} | \leq C$$
for every principal K3 surface $X / K$ with CM by $E$ over $\overbar{K}.$ 
\end{itemize}
\end{rmks}
In the following, we provide two examples of the algorithm above, both concerning K3 surfaces of maximal Picard rank. 
\begin{enumerate}
\item (Gaussian integers) Let $E=\Q(i)$. In this case, the K3 class field of $E$ is $E$ itself. We put $K=E$. Every invariant ideal of $E$ can be written as $ I = (1+i)^k \cdot (n)$ with $n \in \Z$ and $(n,2) =1$, and we have to find all such $I$ with $G_{K3,I}(E) = 1$. To do so, decompose $$ n = p_1^{\alpha_1} \cdots p_l^{\alpha_l} \cdot q_1^{\beta_1} \cdots q_j^{\beta_j},$$
where the $q$'s are inert (i.e. $ \equiv 3 \mod 4$) and the $p$'s are split (i.e. $\equiv 1 \mod 4$). By Theorem \ref{superformula} we have
$$| G_{K3,I}(E) | = \frac{h_E \cdot \phi_E(I) \cdot [\Oo^{\times}_F \colon \mathrm{N}(\Oo^I_E)]\cdot [E \colon F]}{h_F \cdot \phi_F(J) \cdot [\Oo^{\times}_E \colon \Oo^I_E] \cdot e(E/F, J) \cdot | \mathrm{H}^1(E^{I,1}) |} = $$ $$=\frac{ \phi_E(I) \cdot 4}{\phi_F(J) \cdot [\Oo^{\times}_E \colon \Oo^I_E] \cdot e(E/F, J) \cdot | \mathrm{H}^1(E^{I,1}) |}  $$
\begin{itemize}
\item If $k=0$ and $n > 1$ 
\begin{itemize}
\item $[\Oo^{\times}_E \colon \Oo^I_E] = 4$;
\item $ e(E/F, J) = 4 $, since only $2$ and the place at infinity ramify;
\item $| \mathrm{H}^1(E^{I,1}) | = 1$, by Proposition \ref{H1}.
\end{itemize}
So we obtain 
$$| G_{K3,I}(E) |  = \frac{\phi_K(n) }{4 \cdot \phi(n)} = \frac{1}{4} \prod p_i^{\alpha_i -1}(p_i -1) \cdot \prod q_i^{\beta_i -1}(q_i +1),$$
hence, in this case, $| G_{K3,I}(E) | =1$ if and only if $n= 3$ or $n = 5$. 
\item 
If $k > 0$ and $n = 1$ then
\begin{itemize}
\item $e(E/F, J) = 2$;
\item $ [\Oo^{\times}_K \colon \Oo^{(1+i)^k}_K] = 
\begin{cases}
1 \,\,\text{if} \,\, k=1 \\
 2 \,\,\text{if} \,\, k=2 \\   
 4 \,\,\text{if} \,\, k>2;
\end{cases}$
\item $ \frac{\phi_K(1+i)^k}{\phi((1+i)^k \cap \Z)} = 2^{\lfloor \frac{k}{2} \rfloor}.$ 
\end{itemize}
In particular, we can write $[\Oo^{\times}_K \colon \Oo^{(1+i)^k}_K] = 2^a$ with $a = 0,1,2$.
Since $2$ ramifies in $E$, in general the cohomology groups $\mathrm{H}^1(E^{(1+i)^n,1})$ are not zero. However, Proposition \ref{H1} tells us that their cardinality $| \mathrm{H}^1(E^{(1+i)^n,1}) |$ always divides $$ 2 \cdot  [ (\Oo_K / (1+i)^k )^{\times, G} \colon (\Z / (1+i)^k \cap \Z )^{\times}],$$
and we compute
\begin{itemize}
\item $ [ (\Oo_K / (1+i)^k )^{\times, G} \colon (\Z / (1+i)^k \cap \Z )^{\times}] = 
\begin{cases}
2 \,\,\text{if} \,\, k \,\, \text{is even,} \\
 1 \,\,\text{if} \,\, k \,\, \text{is odd.}
\end{cases} $
\end{itemize}
Write $ | \mathrm{H}^1(E^{(1+i)^n,1}) | = 2^b$ with $b=0,1,2$. Thus, $|G_{K3, (1+i)^k}(E)| = 1$ if and only if $$ \frac{ 2^{\lfloor \frac{k}{2} \rfloor +1 }}{ 2^{a + b }}=1.$$
Since $a+b \leq 4$ this can happen only if $k \leq 7$. Nevertheless, if $k=7$ then $b \leq 1$, so that $k \leq 6$.
\item Finally, assume that $k \geq 1$ and $n>2$. Thanks to the results above, if $|G_{K3, I}| = 1$, then $I = (1+i)^k \cdot 5^{\alpha}$ or $I= (1+i)^k\cdot 3^{\beta}$. Let us begin with the former case: we have $$|G_{K3, I}(E)| = \frac{2^{\lfloor \frac{k}{2} \rfloor} \cdot 3^{\beta-1} \cdot 4 \cdot 4 } {4 \cdot 2 \cdot |\mathrm{H}^1|} = \frac{2^{1+ \lfloor \frac{k}{2} \rfloor} \cdot 3^{\beta-1} } { |\mathrm{H}^1|}.$$
Hence $\beta = 1$ since $\mathrm{H}^1$ is $2-$torsion and as before we see that if $|G_{K3, I}(E)|  = 1$ then necessarily $k \leq 2$. Finally, the same is true for the case $I= (1+i)^k\cdot 5^{\beta}$.

\end{itemize}
Hence, we have the following possibilities for $\Br(\overline{X})^{G_K}$ (as isomorphism classes of abelian groups) 
 $$0,  \, \, \Z /2 , \, \,  (\Z /2)^2 , \,\, \Z / 4  \times \Z /2 , \,\, (\Z /4)^2 , \,\, \Z / 8  \times \Z /4 , $$
 $$(\Z /8)^2 , \,\, (\Z / 3)^2 ,  \, \,  (\Z / 3)^2 \times \Z /2 , \,\, (\Z / 3)^2 \times (\Z /2)^2 ,$$ $$(\Z / 5)^2, \,\, (\Z / 5)^2  \times \Z /2 , \,\, (\Z / 5)^2  \times (\Z /2)^2,$$
confirming the results in \cite{MR3590544} and \cite{MR2802504} about diagonal quartic surfaces.
\item (Eisenstein integers). If $E= \Q( \sqrt{-3})$, the K3 class field of $E$ is again $E$ itself. Put $K=E$. The only prime of $\Z$ that ramifies in $E$ is $3$, with $(3) = (\sqrt{-3})^2 $. In particular, since $2$ does not ramify, thanks to Proposition \ref{H1} we have 
$$| G_{K3,I}(E) | = \frac{h_E \cdot \phi_E(I) \cdot [\Oo^{\times}_F \colon \mathrm{N}(\Oo^I_E)]\cdot [E \colon F]}{h_F \cdot \phi_F(J) \cdot [\Oo^{\times}_E \colon \Oo^I_E] \cdot e(E/F, J) } = \frac{ 4 \cdot \phi_E(I)}{ \phi_F(J) \cdot [\Oo^{\times}_E \colon \Oo^I_E] \cdot e(E/F, J) }$$
for every invariant ideal $I \subset \Oo_E$. As before, let us proceed in computing these numbers. One can check that
\begin{equation} \label{cases ein}  [\Oo^{\times}_E \colon \Oo^I_E] = \begin{cases} 
1 \,\, \text{if} \,\, I = \Oo_E; \\
2\,\, \text{if} \,\, I = (\sqrt{-3}); \\
3 \,\, \text{if} \,\, I = (2); \\ 
6\,\, \text{otherwise}.
\end{cases}
\end{equation}
Write $$I = (\sqrt{-3})^k \cdot p_1^{\alpha_1} \cdots p_l^{\alpha_l} \cdot q_1^{\beta_1} \cdots q_j^{\beta_j},$$
where the $q$'s are inert primes (i.e. $\equiv 2 \mod 3$) and the $p$'s are split (i.e. $\equiv 1 \mod 3$). Hence, 
$$| G_{K3,I}(E) | = 4 \cdot 3^{\lfloor k/2 \rfloor} \cdot \prod p_i^{\alpha_i -1}(p_i -1) \cdot \prod q_i^{\beta_i -1}(q_i +1) \cdot \frac{1}{[\Oo^{\times}_E \colon \Oo^I_E] \cdot e(E/F, J)}.$$ Using this, we see that 
\begin{itemize}
\item If $k=0$, then $| G_{K3,I}(E) | = 1$ if and only if $I= (2), (4), (5), (7)$;
\item If $k=1$, then $| G_{K3,I}(E) | = 1$ if and only if $I = (\sqrt{-3}), (2 \sqrt{-3});$
\item if $k=2$, then $| G_{K3,I}(E) | = 1$ if and only if $I = (3);$
\item if $k=3$, then $| G_{K3,I}(E) | = 1$ if and only if $I = (3 \sqrt{-3});$
\item if $k>3$, then  $| G_{K3,I}(E) | > 1$. 
\end{itemize}
Hence, we have the following possibilities for $\Br(\overline{X})^{G_K}$ (as isomorphism classes of abelian groups) 
$$ 0, \,\, \Z / 3, \,\, (\Z /2)^{ 2} , \,\,  (\Z /2)^2   \times \Z /3, \,\, (\Z /3)^2, \,\, (\Z /4)^2, \,\, \Z / 9 \times \Z /3 , \,\,  (\Z / 5)^2, \,\, (\Z / 7)^2.$$
\end{enumerate}
Note how the latter example was easier due to the vanishing of $H^1(E^{I,1})$ for each $I$. In principle, one could carry on and do similarly for any $E$, but one has to face the difficulty of choosing $K$ accordingly, in order to not obtain empty results. One way to do this when $E$ is quadratic imaginary, and to generalize the computations above, is to choose $K$ to be the Hilbert class field of $E$. In this case in fact there are always K3 surfaces with CM by $\Oo_E$ defined over $K$, as one can simply take $X = \mathrm{Km}(E_1 \times E_2)$, the Kummer surface associated to the product of two elliptic curves $E_1,E_2/ K$ with CM by $\Oo_E$. When $E_1 = E_2$ this was the object of study of Newton's work \cite{MR3483120}. She gives a recipe to explicitly compute the groups $\Br(\overline{X})[\ell^{\infty}]^{G_K}$ for every prime number $\ell$ and any $K$ (see Theorem 2.6 of \textit{loc. cit.}) up to the knowledge of some arithmetic (= class field theoretical) invariant depending on $E$, $K$ and $\ell$. In this last part of the paper, we show how to employ our results to study the aforementioned case when $K$ is the Hilbert class field of $E$. Note that we do not need to make any assumption on the geometry of $X$ (i.e. $X$ does not need to be a Kummer surface, but can be any singular K3 surface with CM by $\Oo_E$), whereas on the other hand we do not compute the Brauer group explicitly, but only list the finitely many possibilities. In the next, c.g. stands for complex conjugation. 

\begin{thm} \label{rachelk}
Let $E$ be a quadratic imaginary field, and let $K = K(E)$ be its Hilbert class field. Then $$\Br(E, K) = \bigg\{ \Oo_E/ I \colon I \,\, \text{is invariant and c.g. acts trivially on} \,\, (\Oo_E / J)^{\times} / \mu(E) \bigg\}.$$
\end{thm}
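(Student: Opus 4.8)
The plan is to make the finite set $\Br(E,K(E))$ produced by the algorithm of this section completely explicit, by translating everything into the language of ray class groups of $E$ and the action of $\Gal(E/\Q)=\langle\sigma\rangle$ on them.

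\emph{Step 1: a clean description of $\Br(E,K(E))$.} For any ideal $I\subset\Oo_E$ put $J:=\lcm(I,\overline I)$; it is ambiguous, satisfies $J\subseteq I$, and $G_{K3,I}(E)=G_{K3,J}(E)$. A refinement of moduli $I'\subseteq J$ induces a surjection $G_{K3,I'}(E)\twoheadrightarrow G_{K3,J}(E)$, because both are quotients of $\A^\times_{E,f}$ and $\tilde{\U}_{I'}\subseteq\tilde{\U}_{J}$. Combining this with Theorem \ref{baruerCM} and the definition of the algorithm, I would first show that $\Oo_E/I\in\Br(E,K(E))$ if and only if $|G_{K3,I}(E)|$ divides $[K(E):E]=h_E$: the ``if'' direction is witnessed by the ambiguous ideal $J$ itself, and for ``only if'' any ambiguous $I_i\subseteq I$ satisfies $I_i=\overline{I_i}\subseteq I\cap\overline I=J$, so $|G_{K3,J}(E)|$ divides $|G_{K3,I_i}(E)|$, which divides $h_E$. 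For $E$ imaginary quadratic the Remark following Proposition \ref{better understanding 2} gives $G_{K3,I}(E)=G_{K3,J}(E)=\Cl'_J(E)=\Cl_J(E)/\Cl_J(E)^G$, so the condition becomes $|\Cl_J(E)/\Cl_J(E)^G|\mid h_E$.

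\emph{Step 2: rewriting in terms of $N$.} Since $J$ is ambiguous, the two standard exact sequences
$$\Oo_E^\times\longrightarrow(\Oo_E/J)^\times\longrightarrow\Cl_J(E)\longrightarrow\Cl(E)\longrightarrow 1,\qquad 1\longrightarrow N\longrightarrow\Cl_J(E)\longrightarrow\Cl(E)\longrightarrow 1,$$
with $N:=\ker(\Cl_J(E)\to\Cl(E))$, are $\sigma$-equivariant. Using $\Oo_E^\times=\mu(E)$, the first identifies $N\cong(\Oo_E/J)^\times/\mu(E)$ as $\Gal(E/\Q)$-modules, and the second gives $|\Cl_J(E)|=h_E\cdot|N|$. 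Hence ``$|\Cl_J(E)/\Cl_J(E)^G|\mid h_E$'' is equivalent to ``$|N|\mid|\Cl_J(E)^G|$''. Since $N\subseteq\Cl_J(E)^G$ means precisely that $\sigma$ acts trivially on $N$, hence on $(\Oo_E/J)^\times/\mu(E)$, the theorem is reduced to the implication: if $|N|$ divides $|\Cl_J(E)^G|$, then $N\subseteq\Cl_J(E)^G$.

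\emph{Step 3: the key implication — the main obstacle.} The decisive input is the classical fact that $\sigma$ acts on the ordinary class group $\Cl(E)$ by inversion, so that $1+\sigma$ annihilates $\Cl(E)=\Cl_J(E)/N$; thus $(1+\sigma)\Cl_J(E)\subseteq N$, and since also $(1+\sigma)\Cl_J(E)\subseteq\ker(\sigma-1)=\Cl_J(E)^G$ we obtain $(1+\sigma)\Cl_J(E)\subseteq N\cap\Cl_J(E)^G$. Localized at an odd prime $\ell$ this already closes the argument: splitting the finite $\ell$-group $\Cl_J(E)_\ell$ into $\sigma$-eigenspaces, the previous inclusion forces $N_\ell^{+}=\Cl_J(E)_\ell^{+}=\Cl_J(E)_\ell^{G}$, and then $|N_\ell|\mid|\Cl_J(E)_\ell^{G}|$ gives $N_\ell^{-}=0$, i.e.\ $N_\ell\subseteq\Cl_J(E)^G$. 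The $2$-primary part is more delicate, as no eigenspace splitting is available; there I would argue through the Tate cohomology of the finite $\Z/2$-module $N$ (whose Herbrand quotient is $1$) together with the compatibility of $1+\sigma$ with the two exact sequences of Step 2, the crucial point being to bound the $2$-primary $\sigma$-antiinvariants of $\Cl_J(E)$ that do not already lie in $N$ (none, for instance, when $2$ is unramified in $E$). Assembling the prime-by-prime conclusions yields $N\subseteq\Cl_J(E)^G$, and hence the asserted description of $\Br(E,K(E))$; I expect essentially all of the technical work to sit in this $2$-primary analysis.
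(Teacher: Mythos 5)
Your route diverges from the paper's at the very first step, and the divergence is what creates the gap you yourself flag. The constraint that Theorem \ref{baruerCM} actually provides is the field containment $F_{K3,I_B}(E)\subseteq K$, not merely the degree divisibility $[F_{K3,I_B}(E):E]\mid [K:E]$; the paper's proof works with the containment. Since $F_{K3,J}(E)=K_J'(E)$ for $E$ imaginary quadratic, and both $K_J'(E)$ and $K(E)$ sit inside $K_J(E)$, the Galois correspondence converts $K_J'(E)\subseteq K(E)$ into the subgroup containment $\ker\bigl(\Cl_J(E)\to\Cl(E)\bigr)\subseteq\Cl_J(E)^G$; identifying that kernel with $(\Oo_E/J)^{\times}/\mu(E)$ via the fundamental exact sequence, the containment is \emph{literally} the statement that $\Gal(E/\Q)$ acts trivially on $(\Oo_E/J)^{\times}/\mu(E)$. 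No passage from an index divisibility to a subgroup containment is ever required.

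By contrast, your Steps 1--2 reduce everything to the numerical condition $|N|\mid|\Cl_J(E)^G|$ with $N=\ker(\Cl_J(E)\to\Cl(E))$, so your Step 3 is forced to prove that this divisibility implies $N\subseteq\Cl_J(E)^G$. The eigenspace argument does settle the odd part correctly, but at $p=2$ you only sketch a strategy and explicitly defer ``essentially all of the technical work''; as written this is not a proof, and it is not clear that the implication even holds in general for the $2$-part of $\Cl_J(E)$: a subgroup of a finite abelian $2$-group whose order divides that of the fixed subgroup need not be contained in it, and the inclusion $(1+\sigma)\Cl_J(E)\subseteq N\cap\Cl_J(E)^G$ does not by itself exclude $\sigma$-antiinvariant $2$-torsion inside $N$. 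The repair is not more $2$-primary cohomology but a change of starting point: use the field containment $F_{K3,J}(E)\subseteq K(E)$ coming from the proof of Theorem \ref{baruerCM}, after which the equivalence with triviality of the $G$-action is immediate and the whole of your Step 3 becomes unnecessary.
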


\begin{rmk}
We could still use the algorithm to get similar results, but as shown in the next proof, the general facts of Section \ref{section K3 class groups} allow us to treat all the cases together.
\end{rmk}

\begin{proof}
Our aim is to find all the invariant ideals $I \subset \Oo_E$ such that
\begin{equation} \label{inclusion rachel}
F_{K3, I}(E) \subset K(E).
\end{equation}
By Proposition \ref{better understanding 2} we have a diagram of field extensions 
\begin{center}
\begin{tikzcd}[every arrow/.append style=dash]

& K_{I}(E) 
 \arrow{dd} & \\
 F_{K3,I}(E) 
  \arrow{dr} & \\
& K_{I}'(E)
 \arrow{d}\\
& E 

\end{tikzcd}

\end{center}
however, since $E$ is quadratic imaginary, we also have that $K_{I}'(E) =  F_{K3,I}(E) $. Introducing the Hilbert class field in the diagram above we obtain 
\begin{center}
\begin{tikzcd}[every arrow/.append style=dash]

& K_{I}(E) 
 \arrow{dd} & \\
K(E) 
  \arrow{ddr}  \arrow{ur} & \\
& K_{I}'(E)
 \arrow{d}\\
& E 

\end{tikzcd}

\end{center}
so that $$\Gal( K_{I}(E)  / K(E)) \cong \ker \pi ,$$
where $\pi$ is the canonical projection $$\pi \colon \Cl_{I}(E) \to \Cl(E),$$
and $$\Gal( K_{I}(E)  / K'_{I}(E)) \cong \Cl_{I}(E)^G.$$
Hence, the inclusion \eqref{inclusion rachel} becomes 
\begin{equation} \label{inclusion galois}
\ker \pi \subset \Cl_{I}(E)^G .
\end{equation}
Using the fundamental exact sequence 
\begin{equation}\label{fes}
1 \rightarrow \Oo^{I}_E \rightarrow \Oo^{\times}_{E} \rightarrow (\Oo_E / I)^{\times} \rightarrow \Cl_{I}(E) \rightarrow \Cl(E) \rightarrow 1,
\end{equation}

we see that $$ \ker \pi  \cong  \big( \mathcal{O}_{E}/ I \big)^{\times} / \mu(E).$$ 
It follows that \eqref{inclusion galois} holds true if and only if $G$ acts trivially on $\big( \mathcal{O}_{E}/ I \big)^{\times} / \mu(E).$
\end{proof}
\begin{rmks}
\leavevmode
\begin{itemize}
\item In particular, for every product of the form $\mathfrak{r} = \mathfrak{r}_1 \cdots \mathfrak{r}_k$ with $\mathfrak{r}_i$ distinct ramified primes, we see that $\Oo_E / \mathfrak{r}$ is a possible Brauer group for a principal K3 surface $X / K(E)$ with CM by $E$. 
\item It is a consequence of Theorem 2.6. and Theorem 3.1. of \cite{MR3483120} that if $\mu(E) = \{ \pm 1\}$ and $\ell \geq 3$ is a prime of $\Z$ that does not ramify in $E$, then $\Br(\overline{X})[\ell^\infty]^{G_K} = 0$. We note that under this assumption
$\Gal(E / \Q)$ does not act trivially on $(\Oo_E / \ell^n)^{\times} / \{ \pm 1 \}$ if $n > 0$ and $\ell >3$. If $\ell = 3$, then two things can happen (still assuming that it does not ramify): if $3$ splits in $E$, then $\Oo_E / 3$ is a possible Brauer group for a K3 surface $X / K(E)$ with CM by $E$ (this does not contradict Newton's result, but it is taking into account all the other K3 surfaces $X$ which are not the Kummer surface of a product of the same elliptic curve), whereas if $3$ is inert, we still have that $\Gal(E / \Q)$ does not act trivially on $(\Oo_E / \ell^n)^{\times} / \{ \pm 1 \}$ for every $n > 0$.
\end{itemize}
\end{rmks}

\bibliographystyle{plain}
\bibliography{bibliographyT}
\end{document}